\pgfplotsset{compat=1.12}
\newcommand{\R}{{\mathbb R}}
\newcommand{\C}{{\mathbb C}}
\newcommand{\Z}{{\mathbb Z}}
\newcommand{\N}{{\mathbb N}}
\newcommand{\Q}{{\mathbb Q}}
\newcommand{\IH}{{\mathbb H}}
\newcommand{\CA}{\mathcal{A}}
\newcommand{\CE}{\mathcal{E}}
\newcommand{\CI}{\mathcal{I}}
\newcommand{\sgn}{\mbox{sgn}}
\newcommand{\SL}{\mathrm{SL}}
\newcommand{\zz}{\mathfrak{z}}
\theoremstyle{plain}
\newtheorem{theorem}{Theorem}[section]
\newtheorem{lemma}[theorem]{Lemma}
\theoremstyle{definition}
\newtheorem*{remark}{Remark}
\newtheorem*{remarks}{Remarks}
\numberwithin{equation}{section}
\newcommand{\pmat}[1]{\left( \smallmatrix #1 \endsmallmatrix \right)}
\renewcommand{\sgn}{\textnormal{sgn}}
\setlist[enumerate]{leftmargin=*, listparindent=\parindent, parsep=0pt, 
	font=\upshape} 
\setlist[itemize]{leftmargin=*} 
\def\lp{\left(}
\def\rp{\right)}
\def\d{\delta}
\def\x{\xi}
\def\e{\varepsilon}
\def\t{\tau}
\renewcommand{\sgn}{{\rm sgn}}
\def\wh{\widehat}
\def\slashchar#1{\setbox0=\hbox{$#1$}           
   \dimen0=\wd0                                 
   \setbox1=\hbox{/} \dimen1=\wd1               
   \ifdim\dimen0>\dimen1                        
      \rlap{\hbox to \dimen0{\hfil/\hfil}}      
      #1                                        
   \else                                        
      \rlap{\hbox to \dimen1{\hfil$#1$\hfil}}   
      /                                         
   \fi}                                        %
\setlist[itemize]{noitemsep, topsep=0pt}
\newcounter{exercise}
\renewcommand{\theexercise}{\thesection.\arabic{exercise}}
\newmdenv[%
    style=exercise,
    settings={\global\refstepcounter{exercise}},
    frametitlefont={\bfseries Exercise~\theexercise\quad},
]{exercise}
\newmdenv[%
    style=exercise,
    frametitlefont={\bfseries Exercise~\quad},
]{exercise*}
\newmdenv[%
    backgroundcolor=gray!8,
    linecolor=violet,
    outerlinewidth=1pt,
    roundcorner=3mm,
    skipabove=\baselineskip,
    skipbelow=\baselineskip,
]{boxes}
\newcommand{\vast}{\bBigg@{2}}
\newcommand{\Vast}{\bBigg@{5}}
\renewcommand{\pmod}[1]{\  \,  \left(  \mathrm{mod} \,  #1 \right)}
\newcommand{\erf}{{\mathrm{erf}}}
\begin{document}

\title[Fourier Coefficients of Weight Zero Mixed False Modular Forms]{Fourier Coefficients of Weight Zero Mixed False Modular Forms}

\author{Giulia Cesana}
\address{Giulia Cesana, University of Cologne, Department of Mathematics and Computer Science, Weyertal 86-90, 50931 Cologne, Germany}
\email{gcesana@math.uni-koeln.de}

\begin{abstract}
	In this paper we employ the Circle Method to give exact formulae for Fourier coefficients of an infinite family of weight zero mixed false modular forms using and extending the techniques of Bringmann and Nazaroglu as well as Rademacher. To do so we additionally provide a bound on a Kloosterman sum of modulus $k$.
\end{abstract}

\subjclass[2010]{11F12, 11F20, 11F30, 11L05}
\keywords{Circle Method, exact formulae, mixed false modular forms, Kloosterman sum}

\maketitle

\section{Introduction and statement of results} 
In \cite{bringmann2019framework} Bringmann and Nazaroglu embedded \textit{false theta functions}, functions that resemble theta functions but do not have modular transformation properties, into a modular framework. An example is given by
\begin{align*}
	\psi(z;\t)\coloneqq i\sum_{n\in\Z} \sgn\left(n+\frac 12\right) (-1)^n q^{\frac 12\left(n+\frac 12\right)^2} \zeta^{n+\frac 12},
\end{align*}
where here and throughout $\zeta\coloneqq e^{2\pi i z}$ for $z\in\C$, $q\coloneqq e^{2\pi i\tau}$, with $\t\in\mathbb{H}$, and \begin{align*}
\sgn(n)\coloneqq \begin{cases}
0 & \text{ if } n=0,\\
1 & \text{ if } n>0,\\
-1 & \text{ if } n<0,
\end{cases}
\end{align*}
as usual.
They found the modular completion\footnote{These are modular objects from which the original function can be easily recovered, here for example by taking the limit (see \eqref{limit completion}).} of those false theta functions, with $w\in\IH$, given by (see \cite[equation (1.2)]{bringmann2019framework}) 
\begin{align*}
	\widehat{\psi}(z;\t,w) \coloneqq i\sum_{n\in\Z} \erf\left(-i\sqrt{\pi i(w-\t)} \left(n+\frac 12 +\frac{\operatorname{Im}(z)}{\operatorname{Im}(\t)}\right)\right) (-1)^n q^{\frac 12\left(n+\frac 12\right)^2} \zeta^{n+\frac 12},
\end{align*}
and repaired the modular invariance, where $\erf (z)\coloneqq\frac{2}{\sqrt{\pi}} \int_{0}^{z} e^{-t^2}dt$ denotes the {\it error function} and where $\widehat{\psi}$ satisfies (see \cite[equation (1.3)]{bringmann2019framework}) 
\begin{align}\label{limit completion}
	\lim_{t\rightarrow\infty} \widehat{\psi}(z;\t,\t+it+\varepsilon) = \psi(z;\t)
\end{align}
if $-\frac 12 < \frac{\operatorname{Im}(z)}{\operatorname{Im}(\t)} <\frac 12$ and $\varepsilon>0$ arbitrary. Note that here and in the following we define the square-root on a cut-plane excluding the negative reals and imposing positive square roots for positive real numbers.

As an application from this framework they considered the false theta functions at rank one (see \cite[equation (1.6)]{bringmann2019framework}) 
\begin{equation*} 
	F_{j,N}(\tau) \coloneqq \sum_{\substack{n\in \Z \\ n \equiv j \pmod{2N}}} \sgn (n) \, 
	q^{\frac{n^2}{4N}},
\end{equation*}
with $j\in\Z$ and $N\in\N_{>1}$ and showed how the quantum modularity\footnote{For a so-called quantum set $\mathcal{Q}\subset \Q$ we call a function $f:\mathcal{Q}\to\C$ \textit{quantum modular form} of weight $k$, if its obstruction to modularity, namely $f(\tau) - (c\tau +d)^{-k} f(M\tau),$ for $M=(\begin{smallmatrix}
	a & b \\ c & d
	\end{smallmatrix})\in \Gamma \subset \operatorname{SL}_2(\Z)$, behaves ``nice'' in some analytical sense. See e.g.\@, \cite{zagierQMF} for more background on quantum modular forms.} of these functions follows from the construction of their completions.

The motiviation for looking at this function comes from W-algebraic characters, see for example \cite{W-algBringMil, creutzing2014false, creutzing2017quantum, milas2014character}.
Characters of modules of rational vertex operator algebras are often of the form 
$$\frac{f(\tau)}{\eta(\t)^k},$$ 
where $\eta(\tau)\coloneqq q^\frac{1}{24}\prod_{n\geq1}\left( 1-q^n\right)$ is {\it Dedekind's eta function}. In \cite{creutzing2014false} the authors observed that some numerators of atypical characters of the so-called $(1,p)$-singlet algebra are false theta functions of Rogers (see \cite{Andrews2009notebookII}). In particular, the functions 
$$\CA_{j,N} (\tau) \coloneqq \frac{F_{j,N}(\tau)}{\eta(\tau)}$$ 
show up as characters of  the atypical irreducible modules of the $(1,p)$-singlet vertex operator algebra $M_{1,s}$, for $1\leq s \leq p-1$ and $p\in\N_{\geq2}$, that have been studied in \cite{W-algBringMil, creutzing2014false, creutzing2017quantum}.

In 1937 Rademacher \cite{rademacher1938partition} proved the following exact formula for the partition function
\begin{align*}
	p(n) 	=& \frac{2\pi}{\left(24n-1\right)^\frac{3}{4}} \sum_{k\geq 1} \frac{A_k(n)}{k} I_{\frac{3}{2}}\left(\frac{\pi\sqrt{24n-1}}{6k}\right)  ,
\end{align*}
where $A_k(n)$ is a Kloosterman sum given by
\begin{align*}
	A_k(n)\coloneqq\sum_{\substack{h\pmod{k} \\\gcd(h,k)=1}} e^{\pi i s(h,k)} e^{-2\pi i n\frac{h}{k}},
\end{align*}
with $s(h,k)$ the Dedekind sum defined in \eqref{Dedekind sum}
and where $I_\alpha$ denotes the $I$-Bessel function of order $\alpha$, which in the special case of order $\frac{3}{2}$ can be written as
\begin{align*}
	I_{\frac{3}{2}}(z) = \sqrt{\frac{2z}{\pi}} \frac{d}{dz} \left(\frac{\sinh(z)}{z}\right).
\end{align*}

Our goal is to find Rademacher-type exact formulae for the Fourier coefficients of the infinite family of weight zero \textit{mixed false modular forms}\footnote{These are in general linear combinations of false theta functions multiplied by modular forms (see \cite[Section 4]{bringmann2021false}).} $\CA_{j,N} (\tau)$.
Note that it requires considerably more work to obtain an exact formula for a weight zero function than for a function of negative weight. In contrast to negative weight functions, as for example in the work of Bringmann and Nazaroglu \cite{bringmann2019framework}, we have to take special care of the bound of the Kloosterman sum occuring to ensure that the error term in the Circle Method vanishes. In comparison to \cite{rademacher1938fourier} for example, where Rademacher studied the coefficients of the modular invariant $j(\tau)$ of weight zero, we have the additional problems that the Kloosterman sum showing up in our work is much more complicated and can not be immediately bounded by the famous Weil bound and that the transformation behavior of our family of functions is not as simple as the one of a modular form.

In this paper we let
\begin{align}\label{equation: series Aj,N}
\CA_{j,N} (\tau) \eqqcolon q^{\frac{j^2}{4N}-\frac{1}{24}}\left(a_{j,N}(0) + \sum_{n\geq1} a_{j,N}(n)q^n\right). 
\end{align}
Extending the techniques presented in \cite[Section 3]{bringmann2019framework} and \cite{rademacher1938fourier} we prove the following theorem, which, to the best of the author's knowledge, is the first example of an exact formula of a weight zero mixed false modular form.
\begin{theorem} \label{main Thm}
	For all $n\geq1$ and $\sqrt{\frac{N}{6}}\notin\Z$ we have 
	\begin{small}
		\begin{align}\label{equation: coefficients final with P.V.}
	a_{j,N}(n) =& -\frac{2\pi i }{\sqrt{n+\frac{j^2}{4N}-\frac{1}{24}}}\sum_{k\geq1} \sum_{r=1}^{N-1} \sum_{\kappa=0}^{k-1} \frac{ K_{k,j,N}(n,r,\kappa)}{k^2}  \\
		&\times  \mathrm{P.V.} 	\int_{-\sqrt{\frac{1}{24N}}}^{\sqrt{\frac{1}{24N}}} \sqrt{\frac{1}{24}-Nx^2} \cot\left(\pi \lp -\frac xk+ \frac{\kappa}{k} + \frac{r}{2Nk} \rp \right)  I_1\left(\frac{4\pi\sqrt{n+\frac{j^2}{4N}-\frac{1}{24}}}{k} \sqrt{\frac{1}{24}-Nx^2}\right)  \, dx , \notag
		\end{align}
	\end{small}
\hspace{-0.25cm} where $ K_{k,j,N}(n,r,\kappa)$ is a {\it Kloosterman sum} defined as
	\begin{align}\label{Kloostermansum}
	 K_{k,j,N}(n,r,\kappa)\coloneqq& \sum_{\substack{0\leq h < k \\ \gcd(h,k)=1 }}  \chi_{j,r} (N,M_{h,k})  \zeta_{24k}^{\left(24N\left(\kappa+\frac{r}{2N}\right)^2-1\right)h'-24\left(n+\frac{j^2}{4N}-\frac{1}{24}\right)h  },
	\end{align} 
	with $h'$ a solution of $hh'\equiv -1 \pmod{k}$, $M_{h,k}=\left(\begin{smallmatrix}
	h' & -\frac{hh'+1}{k}\\ k & -h
	\end{smallmatrix}\right)$, $\chi_{j,r}(N,M)$ the multiplier defined in \eqref{multiplier}, and $\zeta_\ell\coloneqq e^{\frac{2\pi i}{\ell}}$ with $\ell \in\N$ an $\ell$-th root of unity.
\end{theorem}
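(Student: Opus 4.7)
The plan is to extract $a_{j,N}(n)$ via Cauchy's formula and evaluate the resulting contour integral by the Circle Method in the style of Rademacher. Writing
$$a_{j,N}(n) = \int_{0}^{1} \CA_{j,N}(\tau)\, e^{-2\pi i \left(n + \frac{j^2}{4N} - \frac{1}{24}\right) \tau}\, d\tau$$
along a path with fixed imaginary part $\operatorname{Im}(\tau) = 1/N_0^2$ for a suitable Farey order $N_0$, I would dissect this interval into Farey arcs centered at rationals $h/k$ with $1\leq k\leq N_0$ and $\gcd(h,k)=1$. On each arc I substitute $\tau = (h + iz)/k$ so that small-arc integration translates to integration in $z$ near $z=0$, which corresponds to approaching a cusp.

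The central step is to apply the modular transformation under $M_{h,k}$. The denominator $\eta(\tau)$ transforms by its well-known multiplier, contributing a $24$-th root of unity and a factor $z^{-1/2}$. For the numerator $F_{j,N}$, which is a false theta function and hence not modular, I would invoke the Bringmann--Nazaroglu completion from \eqref{limit completion}: relate $F_{j,N}$ to a specialization of $\psi(z;\tau)$, use the modularity of $\widehat{\psi}$ under $M_{h,k}$, and then take the limit back. This rewrites $F_{j,N}((h+iz)/k)$ in terms of an exponential times a sum over residues at poles of a cotangent-type kernel, parametrized by $\kappa \in \{0,\dots,k-1\}$ and $r \in \{1,\dots,N-1\}$, plus an explicit integral correction encoding the non-modularity. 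Summing the resulting phase factors over $h \pmod k$ and collecting the character data $\chi_{j,r}(N,M_{h,k})$ from \eqref{multiplier} with the $\eta$-multiplier produces exactly the Kloosterman sum $K_{k,j,N}(n,r,\kappa)$ of \eqref{Kloostermansum}.

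Substituting this decomposition into the Farey sum yields, for each $(k,r,\kappa)$, an inner integral in $z$ that I would evaluate by shifting the contour to a vertical segment through the saddle point of $e^{2\pi z(n+\frac{j^2}{4N}-\frac{1}{24})/k}$ and matching against the standard Hankel/integral representation of the $I_1$-Bessel function; this is where the factor $I_1(\frac{4\pi}{k}\sqrt{(n+\frac{j^2}{4N}-\frac{1}{24})(\frac{1}{24}-Nx^2)})$ and the Jacobian $\sqrt{\frac{1}{24}-Nx^2}/\sqrt{n+\frac{j^2}{4N}-\frac{1}{24}}$ emerge. The residue structure of the cotangent forces the $x$-integral to run over $[-\sqrt{1/(24N)},\sqrt{1/(24N)}]$ and to carry the cotangent weight $\cot(\pi(-x/k+\kappa/k+r/(2Nk)))$; since this cotangent has a simple pole inside or on the boundary of the interval whenever $\kappa$ and $r$ align, the integral must be interpreted as a Cauchy principal value, explaining the $\mathrm{P.V.}$ in \eqref{equation: coefficients final with P.V.}. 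The hypothesis $\sqrt{N/6}\notin\Z$ is what ensures that the pole never coincides with the branch point $x=\pm\sqrt{1/(24N)}$, so that the principal value is well defined.

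The main obstacle is the error analysis, which is far more delicate than in the negative weight case of \cite{bringmann2019framework} because the weight zero of $\CA_{j,N}$ removes the natural $k^{-s}$ decay that ordinarily makes the Farey dissection converge. I therefore expect to need a non-trivial bound on $K_{k,j,N}(n,r,\kappa)$ of the form $O(k^{1-\delta})$ for some $\delta>0$; unlike Rademacher's treatment of $j(\tau)$ in \cite{rademacher1938fourier}, the Kloosterman sum here is twisted by the multiplier $\chi_{j,r}(N,M_{h,k})$, which is not a Dirichlet character, so Weil's bound does not apply directly. My approach would be to write $\chi_{j,r}(N,M_{h,k})$ explicitly in terms of $h,h',k$ and the theta-like data, collapse the resulting expression to an additive character sum over residues of a quadratic form modulo $24Nk$, and then bound by splitting the modulus according to $\gcd(k,24N)$ and applying classical Kloosterman/Salié bounds on each piece. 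Once such a bound is established, the contribution of $k>N_0$ vanishes as $N_0\to\infty$, and the remaining main term assembles into the infinite sum of \eqref{equation: coefficients final with P.V.}.
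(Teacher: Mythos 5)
Your overall route --- Cauchy's formula, Farey dissection, the Bringmann--Nazaroglu completion to transform the false theta numerator, collecting the multiplier and the $\eta$-multiplier into the Kloosterman sum $K_{k,j,N}(n,r,\kappa)$, a power-saving bound for it via Sali\'e-type sums, and a Bessel main term --- is the same as the paper's. But there is a concrete gap in how you propose to generate the inner $x$-integral. In the paper, the error of modularity plus the holomorphic part is first rewritten, via the error-function identity, as a Mordell-type principal value integral $\frac{i}{\pi}\sum_{n\in\Z}e^{2\pi i N(n+\frac{j}{2N})^2\varrho}\,\mathrm{P.V.}\int_{-\infty}^{\infty}\frac{e^{-2\pi N V x^2}}{x-(n+\frac{j}{2N})}\,dx$, and this integral is then split at $|x|=\sqrt{1/(24N)}$, i.e.\ at $d=\tfrac{1}{24}$, precisely so that on the inner range $\tfrac{1}{24}-Nx^2\ge 0$: combined with the principal part $q^{-1/24}$ of $1/\eta$ this produces the growing factor $e^{2\pi(\frac{1}{24}-Nx^2)/(k^2\omega)}$ at the cusp, which is what yields the $I_1$-Bessel factor and the weight $\sqrt{\tfrac{1}{24}-Nx^2}$, while the outer range $|x|\ge\sqrt{1/(24N)}$ and the non-principal part of $1/\eta$ are estimated as errors. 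The cotangent only appears afterwards, from the periodicity $K_{k,j,N}(n,r,\kappa+\ell k)=K_{k,j,N}(n,r,\kappa)$ together with $\pi\cot(\pi x)=\lim_{L\to\infty}\sum_{\ell=-L}^{L}\frac{1}{x+\ell}$. Your assertion that ``the residue structure of the cotangent forces the $x$-integral to run over $[-\sqrt{1/(24N)},\sqrt{1/(24N)}]$'' has this backwards: the range is an input of the splitting, not an output of the cotangent, and without that splitting you have no mechanism separating the Bessel main term from the error, nor a reason for the P.V.\ prescription (the only problematic pole is $x=\frac{r}{2N}$ when $\kappa=0$ and $r<\sqrt{N/6}$, and $\sqrt{N/6}\notin\Z$ keeps it off the endpoint, as you correctly surmised). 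Likewise the main term is obtained in the paper not by a saddle-point/Hankel argument but by closing the $\omega$-integral around a rectangle and expanding the exponential into the $I_1$-series; that is a cosmetic difference, but your version still needs the rectangle-side errors controlled.

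On the Kloosterman bound, your plan is right in outline but substantially underestimates the reduction: the paper expands $\chi_{j,r}(N,M_{h,k})$ explicitly, evaluates the generalized quadratic Gauss sums $G(hN,hj\pm r,k)$ separately for $k$ odd/even and for $3\mid k$ or $3\nmid k$, lifts the inverse $h'$ to larger moduli, converts divisibility conditions such as $\gcd(N,k)\mid(hj\pm r)$ into additive characters by orthogonality, and only then applies Malishev's bound (Lemma \ref{Lemma: Malishev}) to get $K_{k,j,N}(n,r,\kappa)=O_N(nk^{1/2+\varepsilon})$, uniformly in $r,\kappa$. Moreover, the error analysis requires incomplete versions of these Kloosterman sums (the sums $K^*$ and $\widetilde{K}$ with $k_1,k_2$ restricted to intervals), bounded following Lehner, as well as a separate estimate (Lemma \ref{lemma: bound pv integral}) for the tail $|x|\ge\sqrt{1/(24N)}$ of the Mordell integral; none of this appears in your sketch, though it is compatible with your plan.
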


\begin{remarks}
	\begin{enumerate}[wide, labelwidth=!, labelindent=0pt]
		\item Although this representation as a convergent series does not hold for $n = 0$ we obtain that $a_{j,N} (0) = 1$, independent of $j$ and $N$.
				
		\item Note that we are able to split the principal value integral in \eqref{equation: coefficients final with P.V.}, which gives us a more explicit but also more complicated version of our main result, as can be seen in \eqref{equation: coefficients final without P.V.}.
	\end{enumerate}
\end{remarks}

As a second result, which will be extremely helpful in the proof of Theorem \ref{main Thm}, we are able to give a bound on the Kloosterman sum defined in \eqref{Kloostermansum}. In particular, we show the following theorem.
\begin{theorem}\label{Thm Kloosterman bound}\label{THM KLOOSTERMAN BOUND}
	For $\varepsilon>0$ we have that
	\begin{align}\label{bound: Kloosterman sum}
		K_{k,j,N}(n,r,\kappa) = O_N\left(n k^{\frac 12+\varepsilon}\right)
	\end{align}
	as $k\to \infty$.
\end{theorem}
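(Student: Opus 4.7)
The plan is to reduce $K_{k,j,N}(n,r,\kappa)$ to a standard Kloosterman sum amenable to the Weil bound. The key input is an explicit description of the multiplier $\chi_{j,r}(N, M_{h,k})$. Since $\CA_{j,N} = F_{j,N}/\eta$, one can factor $\chi_{j,r}$ into the (false-)theta multiplier of $F_{j,N}$, which is known on the relevant congruence subgroup of level dividing $4N$, and the inverse of the classical $\eta$-multiplier; on $M_{h,k}$ the latter contributes a phase involving the Dedekind sum $s(h,k)$. After unraveling, one expects $\chi_{j,r}(N, M_{h,k}) = \varepsilon_N(h \bmod M_N)\, e^{2\pi i \phi(h,k)}$, where $M_N$ depends only on $N$, $\varepsilon_N$ is a $24$-th root of unity determined by the residue of $h$ modulo $M_N$, and $\phi(h,k)$ is a Dedekind-type phase linear in $s(h,k)$ and $h/k$.

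Using the reciprocity law for Dedekind sums (equivalently $12 k\, s(h,k) \equiv h + h' \pmod{k}$ when $h h' \equiv -1 \pmod{k}$), the Dedekind phase combined with the integer exponent of $\zeta_{24k}$ in \eqref{Kloostermansum} can be rewritten as a clean expression of the form $\zeta_{24 k N}^{A h + B h^{-1}}$ for integers $A = A(r, \kappa, N)$ and $B = B(n, j, N)$, where $B$ is linear in $n$ with $N$-bounded coefficients (coming from the $-24(n+j^2/(4N)-1/24) h$ term in \eqref{Kloostermansum}). Grouping $h$ by its residue class modulo $\gcd(k, M_N)$ then decomposes $K_{k,j,N}(n,r,\kappa)$ into $O_N(1)$ twisted standard Kloosterman sums $K(A', B'; k')$ with $k' \mid k$ and $k/k' = O_N(1)$.

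Applying the Weil bound
\begin{equation*}
|K(A',B';k')| \leq \tau(k')\, \gcd(A', B', k')^{1/2}\, (k')^{1/2},
\end{equation*}
together with the estimates $\tau(k') = O(k^\varepsilon)$ and $\gcd(A', B', k') \leq |B'| = O_N(n)$, yields the stronger bound $|K_{k,j,N}(n,r,\kappa)| = O_N(n^{1/2}\, k^{1/2+\varepsilon})$, which in particular implies the claimed estimate \eqref{bound: Kloosterman sum}.

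The main obstacle is the first step: extracting the completely explicit formula for $\chi_{j,r}(N, M_{h,k})$ on arbitrary $M_{h,k}\in\SL_2(\Z)$ and writing it as a pure phase plus a bounded $N$-periodic factor. Because $F_{j,N}$ is a false theta function, its multiplier system is not inherited from a classical theta transformation; one must invoke the completion $\widehat{\psi}$ from \cite{bringmann2019framework} and carefully track how the error-function contributions combine with the $\eta^{-1}$ multiplier under the general $M_{h,k}$, then verify that the resulting exponent is linear in $h$ and $h'$ modulo a bounded denominator. Disentangling the $(\kappa, r)$-dependence from the $(h, h')$-dependence in a way that isolates a genuine Kloosterman exponent is the technical crux.
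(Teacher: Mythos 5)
Your overall strategy---make the multiplier explicit, linearize the exponent using $hh'\equiv-1\pmod{k}$, reduce to complete Kloosterman-type sums whose auxiliary modulus is $O_N(1)$, and finish with a Weil-type estimate---is in essence the route the paper takes (the paper packages the final step as Malishev's bound for character-twisted sums instead of expanding the twist and quoting Weil). The problem is that the step you yourself flag as the ``technical crux'' is a genuine gap, and the form you \emph{expect} for $\chi_{j,r}(N,M_{h,k})$ is not correct. For $c=k\neq0$ the factor $\psi_{j,r}(N,M^{-1})$ is not a bounded-order root of unity times a Dedekind-type phase: it is a length-$k$ generalized quadratic Gauss sum, with a sine factor that splits everything into two pieces. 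Its evaluation produces (i) a Jacobi symbol whose modulus is a priori comparable to $k$ and which only collapses to a character of $h$ modulo the bounded quantity $\gcd(N,k)$ after cancellation against the Kronecker symbol coming from $\nu_\eta$; (ii) quadratic phases of the type $[h]'\,(hj\pm r)^2$ and $h^2h'$, whose linearization via $hh'\equiv-1$ has to be performed modulo enlarged moduli (e.g.\ $3k$, $8k\gcd(N,\mu)$, $24k\gcd(N,\mu)$, $2^{\alpha+5}k\gcd(N,\mu)$), which is precisely what forces the case analysis on $k$ odd/even and $3\mid k$ or not; and (iii) divisibility constraints such as $\gcd(N,k)\mid(hj\pm r)$, resp.\ $2^{\alpha+1}\gcd(N,\mu)\mid(hj\pm r)$ for even $k$, which restrict the $h$-sum and must be detected by roots of unity, thereby shifting the linear coefficients. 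None of this is visible in your proposed shape $\varepsilon_N(h\bmod M_N)\,e^{2\pi i\phi(h,k)}$ with $\phi$ linear in $s(h,k)$ and $h/k$, so the reduction of \eqref{Kloostermansum} to sums $\zeta_{24kN}^{Ah+Bh^{-1}}$ is asserted rather than established, and Dedekind reciprocity alone cannot deliver it.

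A second, smaller gap sits in the endgame: your estimate rests on $\gcd(A',B',k')\le|B'|=O_N(n)$. Once the multiplier's contributions are incorporated, the coefficient carrying $n$ is no longer $-24\bigl(n+\tfrac{j^2}{4N}-\tfrac1{24}\bigr)$ but acquires terms in $k^2$, $j^2$, and modular inverses (the paper's $\mu_1,\nu_1,\dots,\mu_6,\nu_6$), so the coefficient itself is not $O_N(n)$ and the gcd bound requires an argument, carried out case by case in the paper; moreover the sums that arise are twisted by a real character modulo $\gcd(N,k)$ (Sali\'e type), so either you expand that bounded-modulus twist into additive characters before applying Weil or you invoke a twisted bound such as Malishev's. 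None of this is fatal to your plan---done carefully it would recover the theorem, and in several cases even the stronger $O_N\bigl(n^{1/2}k^{1/2+\varepsilon}\bigr)$---but as written the proof is missing its main computation.
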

As the main tool to prove this theorem we use the following result by Malishev.
\begin{lemma}\label{Lemma: Malishev}{\normalfont(see \cite[page 482]{knopp1964kloosterman})}
	Let 
	\begin{align*}
	K_\rho(\mu_*,\nu_*;G) \coloneqq \sum_{\substack{h\pmod{G} \\ \gcd(h,G)=1}} \left(\frac{h}{\rho}\right) \exp\left(\frac{2\pi i}{G} (\mu_* h+\nu_*h')\right),
	\end{align*}
	where $\mu_*$ and $\nu_*$ are integers, $G$ is a positive integer, and $\rho$ is an odd positive integer all of whose prime factors devide $G$. Furthermore $h'$ is any integral solution of the congruence $hh'\equiv 1 \pmod{G}$ and $(\frac{h}{\rho})$ is the Jacobi symbol. Then
	\begin{align*}
	\left|K_\rho(\mu_*,\nu_*;G)\right| \leq A(\varepsilon) G^{\frac{1}{2}+\varepsilon} \min\left(\gcd(\mu_*,G)^\frac{1}{2}, \gcd(\nu_*,G)^\frac{1}{2}\right)
	\end{align*}
	for each $\varepsilon>0$, where $A(\varepsilon)>0$ depends only on $\varepsilon$.
\end{lemma}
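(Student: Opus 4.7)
The plan is to prove Malishev's bound by the classical two-step strategy for Kloosterman sums: first use Chinese Remainder multiplicativity to reduce to prime-power moduli, then bound each local factor using either the Weil bound (at prime level) or explicit evaluation via completing the square (at higher prime-power level). The hypothesis $\mathrm{rad}(\rho)\mid G$ is precisely what makes the Jacobi-symbol twist factor cleanly through the CRT decomposition, and the fact that $\rho$ is odd prevents the awkward $p=2$ factor from ever carrying a character.

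First I would exploit multiplicativity. Write $G=\prod_p p^{a_p}$ and $\rho=\prod_p p^{b_p}$ (with $b_p=0$ when $p\nmid\rho$, and $b_p>0\Rightarrow a_p\geq 1$ by hypothesis), set $G_p\coloneqq G/p^{a_p}$, and let $\overline{G_p}$ denote its inverse modulo $p^{a_p}$. Parametrize $h\pmod G$ by tuples $(h_p)_p$ with $h_p\equiv h\pmod{p^{a_p}}$ via $h\equiv\sum_p h_p G_p\overline{G_p}\pmod G$, and similarly for $h'$. Because $\rho$ is odd and $(h_p/p)^2=1$ for $\gcd(h_p,p)=1$, the Jacobi symbol reduces to $(h/\rho_0)$, where $\rho_0\coloneqq\prod_{p\mid\rho,\ b_p\text{ odd}}p$ is the squarefree odd kernel. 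Combining these observations yields
\begin{align*}
K_\rho(\mu_*,\nu_*;G)=\prod_p K_{\rho_p}\!\left(\mu_*,\,\nu_*\overline{G_p}^{\,2};\,p^{a_p}\right),
\end{align*}
where $\rho_p\coloneqq\gcd(\rho_0,p)\in\{1,p\}$, reducing the problem to bounding each local factor separately.

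Second, I would bound each local factor $K_{\rho_p}(\alpha,\beta;p^a)$. When $\rho_p=1$ and $a=1$, this is the classical Kloosterman sum and Weil's bound gives $|K|\leq 2\sqrt{p}\,\gcd(\alpha,\beta,p)^{1/2}$. When $\rho_p=1$ and $a\geq 2$, the substitution $h=h_0(1+p^{\lceil a/2\rceil}u)$ together with the Taylor expansion $h'\equiv h_0'-(h_0')^2(h-h_0)+\cdots\pmod{p^a}$ reduces the sum to a quadratic Gauss sum on the inner residue class, yielding $|K|\leq C p^{a/2}\gcd(\alpha,\beta,p^a)^{1/2}$. When $\rho_p=p$ and $a=1$, the twisted sum is a Salié sum and can be evaluated in closed form via the orthogonality of quadratic characters (equivalently, diagonalization into a product of two ordinary quadratic Gauss sums), giving $|K|\leq 2\sqrt{p}$. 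When $\rho_p=p$ and $a\geq 2$, the same completion-of-square argument as above applies: the Legendre character $(h/p)$ is constant on each residue class modulo $p^{\lceil a/2\rceil}$, so it can be pulled out of the inner sum, leaving a quadratic Gauss sum of the same magnitude.

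Finally, multiplying the local bounds and invoking $\prod_p C\leq \tau(G)^{c}\ll_\varepsilon G^\varepsilon$ yields $|K_\rho(\mu_*,\nu_*;G)|\ll_\varepsilon G^{1/2+\varepsilon}\gcd(\nu_*,G)^{1/2}$. Symmetry in $(\mu_*,\nu_*)$ follows from the involution $h\mapsto h'$, which swaps $\mu_*$ and $\nu_*$ and fixes the character since $(h/\rho_0)(h'/\rho_0)=(hh'/\rho_0)=1$; taking the minimum of the two bounds completes the proof. The main obstacle is the twisted local factor with $\rho_p=p$ and $a\geq 2$: one must verify that the Legendre twist does not destroy the oscillation exploited in the completion-of-square evaluation, which is checked by peeling off the character on the outer $p^{\lfloor a/2\rfloor}$ scale (where it is constant) before bounding the remaining inner Gauss sum by the standard $\sqrt{p}$-bound for quadratic Gauss sums.
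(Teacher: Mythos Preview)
The paper does not prove this lemma at all: it is stated as Lemma~1.3 with a citation to \cite[page~482]{knopp1964kloosterman} and then used as a black box in the proof of Theorem~\ref{THM KLOOSTERMAN BOUND}. So there is no ``paper's own proof'' to compare against.

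Your sketch is the standard proof of Malishev's bound and is essentially correct. Two minor remarks. First, the exact form of your twisted multiplicativity identity is slightly off: the CRT decomposition naturally gives $K_{\rho_p}(\mu_*\overline{G_p},\nu_*\overline{G_p};p^{a_p})$ in each factor, and your version with $\nu_*\overline{G_p}^{\,2}$ only appears after a further substitution $h_p\mapsto h_p\overline{G_p}^{-1}$, which when $\rho_p=p$ introduces an extra factor $(\overline{G_p}/p)$ of absolute value $1$. This is harmless for the bound but worth stating precisely. Second, although you correctly observe that the $p=2$ factor never carries a character, the stationary-phase evaluation of the untwisted local sum at $p=2$ with $a\geq 2$ is not quite identical to the odd-$p$ case (one typically needs $a\geq 3$, with $a=2$ handled by hand); this is a routine but separate check that your outline glosses over.
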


The paper is structured as follows. In Section \ref{section: Modular Transformations with Mordell-type Integrals} we use the modular completion of $F_{j,N}$ and its modular transformation behavior to determine the ``false'' modular behavior of $\mathcal{A}_{j,N}$. With this we rewrite the obstruction to modularity term, respectively the error of modularity plus the holomorphic part of our function, as a Mordell-type integral. In Section \ref{section: Bound Kloosterman sum} we go on by proving Theorem \ref{Thm Kloosterman bound} and use the Circle Method, to prove Theorem \ref{main Thm} in Section \ref{section: Circle Method}. We end the paper with some numerical results in Section \ref{section: Numerical results}.

\section*{Acknowledgments}
The author thanks Kathrin Bringmann and Caner Nazaroglu for suggesting the topic. The author thanks Kathrin Bringmann in addition for many helpful discussions and valuable comments on earlier versions of this paper. Further the author wants to thank Caner Nazaroglu for many helpful discussions, double checking parts of the math, and comments that helped to improve the paper a lot. Last but not least the author wants to thank Johann Franke for his help with some numerical experiments.

\section{Modular Transformations with Mordell-type Integrals}\label{section: Modular Transformations with Mordell-type Integrals}

\subsection{Modular transformations}\label{subsection: Modular Transformations} 
We first note that a simple straight-forward calculation shows that
$$F_{0,N}(\tau) =0$$
for every $\tau\in\mathbb{H}$. Thus we from now on assume that $j\neq0$. Furthermore we can restrict to $1\leq j\leq N-1$, since $F_{j,N}(\tau)=-F_{-j,N}(\tau)$ and $F_{j\pm2N,N}(\tau)=F_{j,N}(\tau)$.

According to \cite[Section 4]{bringmann2019framework} a modular completion of $F_{j,N}$ can be written as (for $\tau,w \in \mathbb{H}$)
\begin{equation*}
	\wh{F}_{j,N}(\tau, w) \coloneqq \sum_{\substack{n\in \Z \\ n \equiv j \pmod{2N}}} 
	\erf \lp - i \sqrt{\pi i (w -\tau)}  \frac{n}{\sqrt{2N}} \rp \, q^{\frac{n^2}{4N}}.
\end{equation*}
This modular completion can conveniently be rewritten as (see \cite[equation (4.2)]{bringmann2019framework}) 
\begin{align}\label{completion F_j,N}
	\wh{F}_{j,N}(\tau, w)  = \pm F_{j,N}(\tau) - \sqrt{2N} \int_w^{\tau + i \infty \pm \e}
	\frac{f_{j,N} (\zz) }{\sqrt{i(\zz - \tau)}}  d\zz,
\end{align}
where $\e > 0$ and $f_{j,N} $ are the vector-valued cusp forms of weight $\frac{3}{2}$
\begin{equation*}
	f_{j,N} (\tau) \coloneqq \frac{1}{2N} 
	\sum_{\substack{n\in \Z \\ n \equiv j \pmod{2N}}} n \, q^{\frac{n^2}{4N}} = \sum_{n\in\Z} \left(n+\frac{j}{2N}\right)q^{N\left(n+\frac{j}{2N}\right)^2}.
\end{equation*} 
Equation \eqref{completion F_j,N} can also be understood from the writing of $F_{j,N}(\tau)$ as a holomorphic Eichler integral\footnote{For a cusp form $f$ of weight $k\in 2\N$, Eichler introduced in \cite{eichler1957integrale} the integral
	$ \int_{\tau_0}^{\tau} (\tau-\zz)^{k-2} f(\zz) d\zz,$
	which is independent of the path of integration. Integrals of this shape are now called Eichler integrals.}.

The modular transformations of $F_{j,N}$ can be deduced from \cite[equation (4.5)]{bringmann2019framework} 
\begin{equation} \label{modular transformation completion F}
	\wh{F}_{j,N} \lp \frac{a \tau + b}{c \tau + d}, \frac{a w + b}{c w + d} \rp = 
	\chi_{\tau, w} (M) \, (c \tau+d)^{\frac{1}{2}} \, \sum_{r=1}^{N-1} \psi_{j,r} (N,M) \, 
	\wh{F}_{r,N} (\tau, w),
\end{equation}
where $M \coloneqq \pmat{a & b \\ c&d} \in \SL_2 (\Z)$,
\begin{equation*}
	\chi_{\tau, w} (M) \coloneqq \sqrt{\frac{i(w-\tau)}{(c\tau +d)(c w +d)}}
	\frac{\sqrt{c\tau+d} \sqrt{c w +d}}{\sqrt{i(w-\tau)}},
\end{equation*}
and, for $j,r\in\{1,\dots, N-1\}$,
\begin{equation*}
	\psi_{j,r} (N,M) \coloneqq  \begin{cases}
		e^{2 \pi i a b \frac{j^2}{4N}}  \, e^{- \frac{\pi i}{4} (1 - \sgn(d))} \d_{j,r}
		 &\mbox{if } c=0,  \\
		e^{- \frac{3\pi i}{4}  \sgn(c)} \sqrt{\frac{2}{N |c|}} 
		\displaystyle\sum_{\ell=0}^{|c|-1} e^{\frac{\pi i}{2 N c} \lp a (2N\ell + j)^2 + d r^2 \rp}
		\, \sin \lp  \frac{\pi r (2 N \ell + j)}{N |c|} \rp
		 &\mbox{if } c \neq 0.
	\end{cases}
\end{equation*}

For reference let us also note the modular transformation of the eta function given by
\begin{equation} \label{modular transformation eta}
	\eta \lp \frac{a \tau + b}{c \tau + d} \rp = \nu_\eta (M) \, (c \tau+d)^{\frac{1}{2}} \,
	\eta (\tau),
\end{equation}
where for $c>0$ we have
\begin{equation*}
	\nu_\eta (M) \coloneqq  \exp \lp \pi i  \lp  \frac{a+d}{12c} -\frac{1}{4} + s(-d,c) \rp \rp,
\end{equation*}
with the \textit{Dedekind sum} given by
\begin{align}\label{Dedekind sum}
s(h,k) \coloneqq \sum_{r=1}^{k-1} \frac{r}{k} 
\lp \frac{hr}{k} - \left\lfloor\frac{hr}{k}  \right\rfloor - \frac{1}{2} \rp .
\end{align}

\begin{remark}
	For $M \coloneqq \pmat{a & b \\ c&d}$ an alternate representation of the eta-multiplier is given by (see e.g.\@, \cite[Lemma 2.1]{bringmann2018modularity})
	\begin{align*}
	\nu_\eta(M) = \begin{cases}
	\left(\frac{d}{|c|}\right) e^{\frac{\pi i}{12} \left((a+d)c-bd\left(c^2-1\right)-3c\right)} &\text{ if $c$ is odd},\\
	\left(\frac cd\right) e^{\frac{\pi i}{12}\left(ac\left(1-d^2\right)+d(b-c+3)-3\right)} &\text{ if $c$ is even,}
	\end{cases}
	\end{align*}
	where $(\frac{\cdot}{\cdot})$ is the {\it extended Legendre symbol}, also known as {\it Kronecker symbol}.
\end{remark}

By combining \eqref{completion F_j,N} and \eqref{modular transformation completion F} we can write the modular transformation of $F_{j,N}$ as 
\begin{align*}
&F_{j,N} (\tau) - \chi_{\tau, w} (M) (c\tau +d)^{-\frac{1}{2}} \sum_{r=1}^{N-1}
\psi_{j,r} \left(N,M^{-1}\right) \, F_{r,N}  \lp \frac{a \tau + b}{c \tau + d} \rp 
\\ 
=& \sqrt{2N}  
\int_w^{\tau + i \infty + \e} \frac{f_{j,N} (\zz) }{\sqrt{i(\zz - \tau)}} d\zz
\\
&- \sqrt{2N}  \chi_{\tau, w} (M) (c\tau +d)^{-\frac{1}{2}}
\sum_{r=1}^{N-1} \psi_{j,r} \left(N,M^{-1}\right)
\int_{\frac{a w + b}{c w + d}}^{\frac{a \tau + b}{c \tau + d} + i \infty + \e} \frac{f_{r,N} (\zz) }{\sqrt{i(\zz - \frac{a \tau + b}{c \tau + d})}} d\zz .
\end{align*}
Assuming $c >0$ and taking $w \to \tau + i \infty + \e$ we get $\chi_{\tau, w} \to 1$ and hence 
\begin{align}\label{transformation formula F_jN}
&F_{j,N} (\tau) -  (c\tau +d)^{-\frac{1}{2}} \sum_{r=1}^{N-1}
\psi_{j,r} \left(N,M^{-1}\right) \, F_{r,N}  \lp \frac{a \tau + b}{c \tau + d} \rp 
\notag \\
 & \qquad
=- \sqrt{2N}  (c\tau +d)^{-\frac{1}{2}}
\sum_{r=1}^{N-1} \psi_{j,r} \left(N,M^{-1}\right)
\int_{\frac{a}{c}}^{\frac{a \tau + b}{c \tau + d} + i \infty + \e} \frac{f_{r,N} (\zz) }{\sqrt{i(\zz - \frac{a \tau + b}{c \tau + d})}} d\zz .
\end{align}

Now define 
\begin{equation}\label{multiplier}
\chi_{j,r} (N,M) \coloneqq \nu_\eta (M) \, \psi_{j,r} \left(N,M^{-1}\right).
\end{equation}
Also for $\varrho \in \Q$ we define
\begin{equation*}
\CE_{j,N,\varrho} (\tau) \coloneqq \sqrt{2N}
\int_{\varrho}^{\tau + i \infty + \e} \frac{f_{j,N} (\zz) }{\sqrt{i(\zz - \tau )}} d\zz .
\end{equation*}
Using this together with \eqref{modular transformation eta} and \eqref{transformation formula F_jN} immediately gives the modular transformation equation for $\CA_{j,N}$.
\begin{lemma} 
	For $M \coloneqq \pmat{a & b \\ c&d} \in \SL_2 (\Z)$ with $c>0$ we have
	\begin{align} \label{modular transformation A_jN}       
\CA_{j,N} (\tau) =& 
\sum_{r=1}^{N-1} \chi_{j,r} (N,M) \left( \CA_{r,N}  \lp \frac{a \tau + b}{c \tau + d} \rp 
-  \eta \lp \frac{a \tau + b}{c \tau + d}\rp^{-1} \,
\CE_{r,N,\frac{a}{c}} \lp \frac{a \tau + b}{c \tau + d} \rp \right) .
\end{align}
\end{lemma}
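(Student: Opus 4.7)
The plan is to derive the claimed identity as an essentially mechanical consequence of the equations already collected in this subsection. I would start from the definition $\CA_{j,N}(\tau)=F_{j,N}(\tau)/\eta(\tau)$ and use the transformation formula \eqref{transformation formula F_jN} for the numerator together with the eta-transformation \eqref{modular transformation eta} (solved for $\eta(\tau)$) for the denominator. The key observation is that the factors of $(c\tau+d)^{-\frac{1}{2}}$ coming from $F_{j,N}$ and from $\eta$ cancel, so after dividing one is left with a clean expression in terms of $F_{r,N}(M\tau)/\eta(M\tau)=\CA_{r,N}(M\tau)$ plus an integral term divided by $\eta(M\tau)$.

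First I would write
\begin{align*}
\CA_{j,N}(\tau) = \frac{F_{j,N}(\tau)}{\eta(\tau)}
= \nu_\eta(M)\,(c\tau+d)^{\frac{1}{2}}\,\frac{F_{j,N}(\tau)}{\eta(M\tau)},
\end{align*}
by inverting \eqref{modular transformation eta}. Next I would substitute for $F_{j,N}(\tau)$ the right-hand side of \eqref{transformation formula F_jN}; the two $(c\tau+d)^{\pm\frac12}$ factors cancel, producing a sum over $r$ of $\nu_\eta(M)\psi_{j,r}(N,M^{-1})$ times the bracketed quantity
\begin{align*}
\frac{F_{r,N}(M\tau)}{\eta(M\tau)} \;-\; \frac{\sqrt{2N}}{\eta(M\tau)}\int_{a/c}^{M\tau+i\infty+\varepsilon}\frac{f_{r,N}(\zz)}{\sqrt{i(\zz-M\tau)}}\,d\zz.
\end{align*}
I would then recognize the first summand as $\CA_{r,N}(M\tau)$ and the integral (times $\sqrt{2N}$) as $\CE_{r,N,a/c}(M\tau)$ by its very definition, with $\varrho=a/c$ the cusp to which $M$ sends $i\infty$. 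Finally, using the combined multiplier \eqref{multiplier}, $\chi_{j,r}(N,M)=\nu_\eta(M)\,\psi_{j,r}(N,M^{-1})$, yields exactly \eqref{modular transformation A_jN}.

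There is essentially no obstacle here: all three ingredients (the transformation of $F_{j,N}$, the transformation of $\eta$, and the definition of $\CE_{j,N,\varrho}$) are already in place, and the only things to check are that the weight-$\frac12$ cocycles cancel correctly and that the base-point of the Eichler-type integral in \eqref{transformation formula F_jN} matches the definition of $\CE_{r,N,a/c}$. The mild point worth double-checking is the sign/branch convention for $\sqrt{i(\zz-M\tau)}$ and the fact that the displacement $+i\infty+\varepsilon$ (arising from taking $w\to\tau+i\infty+\varepsilon$ in the derivation of \eqref{transformation formula F_jN} and thereby letting $\chi_{\tau,w}(M)\to 1$) is indeed the one that appears inside $\CE_{r,N,a/c}(M\tau)$; both follow from the conventions fixed earlier in the paper.
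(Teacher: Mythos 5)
Your proposal is correct and follows essentially the same route as the paper: the paper obtains \eqref{modular transformation A_jN} directly by combining \eqref{transformation formula F_jN} with the eta transformation \eqref{modular transformation eta}, the definition of $\CE_{r,N,\frac ac}$, and the multiplier \eqref{multiplier}, exactly as you spell out. Your version merely makes explicit the cancellation of the $(c\tau+d)^{\pm\frac12}$ factors that the paper leaves as an "immediate" step.
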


\subsection{Mordell-type integrals}\label{subsection: Mordell-type Integrals}
Next we want to rewrite the obstruction to modularity term as a Mordell-type integral. If $\varrho \in \Q$ and $V \in \C$ with $\operatorname{Re}(V) > 0$, then we have,
\begin{align*}
\CE_{j,N,\varrho} \left(\varrho + iV\right)
&=
\sqrt{2N}\int_{\varrho}^{\varrho + iV + i \infty + \e}
\frac{f_{j,N} \left(\zz\right) }{\sqrt{i\left(\zz - \left(\varrho + iV\right) \right)}} d\zz
\\
&=
i \sqrt{2N}\int_{-V}^{\infty - i \e}
\frac{f_{j,N} \left(\varrho + i \left(\zz+V\right)\right) }{\sqrt{-\zz}} d\zz  
\\
&= i \sqrt{2N} \int_{-V}^{\infty - i \e}
\sum_{n \in \Z} \lp n + \frac{j}{2N} \rp 
e^{- 2 \pi N \left(n+\frac{j}{2N}\right)^2 \left(\zz+V\right) + 2 \pi i N \left(n+\frac{j}{2N}\right)^2 \varrho } \frac{d\zz}{\sqrt{-\zz}}.
\end{align*}
First note that the integral is absolutely convergent, since the integrand is a cusp form and therefore exponentially decaying as $\zz\rightarrow -V$ and as $\zz\rightarrow\infty$. 
Also note that each summand is exponentially decaying as $\zz\rightarrow\infty$ but we lose this condition as $\zz\rightarrow -V$. Since we want to be able to interchange the sum and the integral, we rewrite
\begin{align*}
\CE_{j,N,\varrho} \left(\varrho + iV\right) = i \sqrt{2N} \lim\limits_{\delta\rightarrow0^+}\int_{-V+\delta}^{\infty - i \e}
\sum_{n \in \Z} \lp n + \frac{j}{2N} \rp 
e^{- 2 \pi N \left(n+\frac{j}{2N}\right)^2 \left(\zz+V\right) + 2 \pi i N \left(n+\frac{j}{2N}\right)^2 \varrho } \frac{d\zz}{\sqrt{-\zz}}.
\end{align*}
We can exchange the sum and the integral now and get
\begin{align*}
	\CE_{j,N,\varrho}& \left(\varrho + iV\right) \notag\\
	=& i \sqrt{2N}  \lim\limits_{\delta\rightarrow0^+}\sum_{n \in \Z} \lp n + \frac{j}{2N} \rp 	e^{- 2 \pi N \left(n+\frac{j}{2N}\right)^2 V + 2 \pi i N \left(n+\frac{j}{2N}\right)^2 \varrho } \int_{-V+\delta}^{\infty - i \e}
	e^{- 2 \pi N \left(n+\frac{j}{2N}\right)^2 \zz} \frac{d\zz}{\sqrt{-\zz}}.
\end{align*}
Using the identity
\begin{small}
	\begin{align*}
	\int_{-V+\d}^{\infty - i \e} \frac{e^{- 2 \pi N (n+\frac{j}{2N})^2 \zz}}{\sqrt{-\zz}} d\zz = 
	-\frac{i}{\sqrt{2N} (n+\frac{j}{2N})}
	\lp \sgn \lp n+\frac{j}{2N} \rp + \erf \lp i \lp n+\frac{j}{2N} \rp \sqrt{2 \pi N (V-\d)}  \rp \rp , 
\end{align*}
\end{small}
\hspace{-0.25cm} yields
\begin{align} \label{epsilondelta final}
\CE_{j,N,\varrho} \left(\varrho + iV\right) 
=&   \lim\limits_{\delta\rightarrow0^+}\sum_{n \in \Z} e^{- 2 \pi N \left(n+\frac{j}{2N}\right)^2 V + 2 \pi i N \left(n+\frac{j}{2N}\right)^2 \varrho } \notag \\
&\times \left(\sgn\left(n+\frac{j}{2N}\right) +\erf\left(i \left(n+\frac{j}{2N}\right)\sqrt{2\pi N\left(V-\delta\right)} \right) \right) .
\end{align}
To check the convergence as $\delta\rightarrow0^+$ we start analogously to \cite[page 10]{bringmann2019framework}. First we notice that the definition of the error function yields the asymptotic behavior 
$$ \erf\left(iz\right) = \frac{ie^{z^2}}{\sqrt{\pi}z}\left(1+O\left(|z|^{-2}\right)\right)  ,$$
if $|\operatorname{Arg}(\pm z)| <\frac{\pi}{4}$ as $|z|\rightarrow\infty$.
Because of this we note that \eqref{epsilondelta final} does not converge absolutely at $\delta=0$ and we have to be careful by taking the limit $\delta\rightarrow0^+$. 
Seperating this main term of the error function as 
\begin{small}
	\begin{align} \label{equation: seperating main term error function}
		\left(	\erf\left(i \left(n+\frac{j}{2N}\right)\sqrt{2\pi N\left(V-\delta\right)}\right) -\frac{i e^{2\pi N\left(n+\frac{j}{2N}\right)^2 \left(V-\delta\right)}}{\pi\left(n+\frac{j}{2N}\right)\sqrt{2 N\left(V-\delta\right)}}\right)+ \frac{i e^{2\pi N\left(n+\frac{j}{2N}\right)^2 \left(V-\delta\right)}}{\pi\left(n+\frac{j}{2N}\right)\sqrt{2 N\left(V-\delta\right)}},
	\end{align}
\end{small}
\hspace{-0.25cm} we find that the term in the brackets is absolutely and uniformly convergent on compact subsets $\operatorname{Re}(V)>0$ and $0\leq\d\leq\d_0$ for sufficently small $\d_0$, so we can plug in $\d=0$ for these terms to take the limit. 

We go on by focussing on the last term of \eqref{equation: seperating main term error function} whose contibution to $\CE_{j,N,\varrho}(\varrho+iV)$ is given by
\begin{align*}
 \lim\limits_{\d\rightarrow0^+} \frac{i}{\pi\sqrt{2N(V-\d)}}	\sum_{n \in \Z} 
	\frac{ e^{2\pi i N\left(n+\frac{j}{2N}\right)^2 \varrho}}{n+\frac{j}{2N}}
	e^{-2 \pi  N \left(n+\frac{j}{2N}\right)^2 \d  }.
\end{align*}
Since $|e^{-2 \pi  N \left(n+\frac{j}{2N}\right)^2 \d} |<1 $ for all $\d>0$ this series is absolutely convergent for any $\d>0$. If the corresponding series is also convergent for $\d=0$ the limit as $\d\rightarrow 0^+$ is simply the value at $\d=0$, by Abel's Theorem (viewing it as a power series in $e^{-2\pi N\d}$). 

To prove convergence at $\d=0$, recall that we assume $j\neq0$. 
Let $\varrho=\frac hk$ with $\gcd(h,k)=1$ and $k>0$ and consider for $\nu\in\N$ the following sum
\begin{align}\label{sum we want to have convergent}
	\sum_{-\nu\leq n\leq \nu} \frac{ e^{2\pi i N\left(n+\frac{j}{2N}\right)^2 \frac hk}}{n+\frac{j}{2N}} .
\end{align}
We immediately see that
$$ e^{2\pi i N\left(n+k+\frac{j}{2N}\right)^2 \frac hk}  = e^{2\pi i N\left(n+\frac{j}{2N}\right)^2 \frac hk} , $$
which means that the phase is periodic in $n$ with period $k$. Denoting the average as
$$ \mathfrak{a} \coloneqq  \frac 1k \sum_{n \pmod{k}} e^{2\pi i N\left(n+\frac{j}{2N}\right)^2 \frac hk},$$
which is convergent by definition, we can rewrite \eqref{sum we want to have convergent} as
\begin{align}\label{splitting in two sums}
 \sum_{-\nu\leq n\leq \nu} \frac{ e^{2\pi i N\left(n+\frac{j}{2N}\right)^2 \frac hk} -\mathfrak{a} }{n+\frac{j}{2N}} + \sum_{-\nu\leq n\leq \nu} \frac{\mathfrak{a}}{n+\frac{j}{2N}}.
\end{align}
We first look at the second sum in \eqref{splitting in two sums}. We have that
$$ \sum_{-\nu\leq n\leq \nu} \frac{\mathfrak{a}}{n+\frac{j}{2N}} =\frac{\mathfrak{a}}{\frac{j}{2N}} + \mathfrak{a}\sum_{1 \leq n\leq \nu} \left(\frac{1}{n+\frac{j}{2N}}+\frac{1}{-n+\frac{j}{2N}}\right)  =\frac{\mathfrak{a}}{\frac{j}{2N}} + \mathfrak{a}\sum_{1 \leq n\leq \nu} \frac{\frac jN}{\left(\frac{j}{2N}\right)^2-n^2},$$
where the summand is $O(n^{-2})$, which gives us that the sum converges absolutely. Looking at the first sum in \eqref{splitting in two sums} and writing $n=km+r$ we obtain
\begin{align}\label{first sum splitting}
	\sum_{-\nu\leq n\leq \nu} \frac{ e^{2\pi i N\left(n+\frac{j}{2N}\right)^2 \frac hk} -\mathfrak{a} }{n+\frac{j}{2N}} =& \sum_{-\frac{\nu}{k}\leq m \leq \frac{\nu}{k}} \sum_{ r=0}^{  k-1} \frac{ e^{2\pi i N\left(km+r+\frac{j}{2N}\right)^2 \frac hk} -\mathfrak{a} }{km+r+\frac{j}{2N}} +O\left(\frac{k}{\nu}\right)\notag\\
	=& \sum_{-\frac{\nu}{k}\leq m \leq \frac{\nu}{k}} \sum_{ r=0}^{  k-1} \frac{ e^{2\pi i N\left(r+\frac{j}{2N}\right)^2 \frac hk} -\mathfrak{a} }{km+r+\frac{j}{2N}} +O\left(\frac{k}{\nu}\right),
\end{align}
using the periodicity of the exponential. For simplicity we denote $d_r\coloneqq e^{2\pi i N\left(r+\frac{j}{2N}\right)^2 \frac hk}-\mathfrak{a}$.
Since we have that
$$ \sum_{ r=0}^{  k-1}  d_r= -k\mathfrak{a} + \sum_{ r=0}^{  k-1}  e^{2\pi i N\left(r+\frac{j}{2N}\right)^2 \frac hk}  =0$$
by definition of $\mathfrak{a}$, we can write 
$ d_{k-1}= -d_0-d_1-\dots-d_{k-2}.$
With this we can rewrite \eqref{first sum splitting} as
\begin{align*}
	\sum_{-\frac{\nu}{k}\leq m \leq \frac{\nu}{k}}  &\left(d_0\left(\frac{1}{km+\frac{j}{2N}}-\frac{1}{km+k-1+\frac{j}{2N}}\right)+ d_1\left(\frac{1}{km+1+\frac{j}{2N}}-\frac{1}{km+k-1+\frac{j}{2N}}\right)\right.\\
	 &\;\;\;\;\left.+\dots+d_{k-2}\left(\frac{1}{km+k-2+\frac{j}{2N}}-\frac{1}{km+k-1+\frac{j}{2N}}\right)\right)+O\left(\frac{k}{\nu}\right),
\end{align*}
where each term in the brackets is $O(m^{-2})$, which gives us that \eqref{first sum splitting} and thus \eqref{sum we want to have convergent} is absolutely convergent, by taking the limit $\nu\rightarrow\infty$.

Therefore the last term of \eqref{equation: seperating main term error function} is convergent for $\d=0$ and with this we see that \eqref{epsilondelta final} is convergent.
Thus we are allowed to set $\d=0$ in \eqref{epsilondelta final} to obtain
\begin{equation} \label{obstruction as sum}
\CE_{j,N,\varrho} (\varrho + iV) = 
\sum_{n \in \Z} 
 \lp \sgn \lp n+\frac{j}{2N} \rp + \erf \lp i \lp n+\frac{j}{2N} \rp \sqrt{2 \pi N V}  \rp \rp 
e^{2 \pi i N \left(n+\frac{j}{2N}\right)^2 (\varrho+iV)  } . 
\end{equation}

Hence, using \eqref{modular transformation A_jN} and the definition of $\CA_{j,N}$, we get 

\makeatletter
\renewcommand{\maketag@@@}[1]{\hbox{\m@th\normalsize\normalfont#1}}
\makeatother

\begin{small}
	\begin{align} \label{connection C and I}
		\CA_{j,N} (\tau) =&  \sum\limits_{r=1}^{N-1} \chi_{j,r} (N,M) \left( \CA_{r,N}  \lp \varrho+iV \rp 
		-  \eta \lp \varrho+iV\rp^{-1} \,
		\CE_{r,N,\varrho} \lp \varrho+iV \rp \right)\notag\\
		 =&  \sum\limits_{r=1}^{N-1} \chi_{j,r} (N,M) \left( \CA_{r,N}  \lp \varrho+iV \rp -  \eta \lp \varrho+iV\rp^{-1} \, \sum\limits_{n \in \Z} \sgn \lp n+\frac{r}{2N} \rp e^{2 \pi i N 	\left(n+\frac{r}{2N}\right)^2 (\varrho+iV)  } \right.\notag\\
		& \left.- \eta \lp \varrho+iV\rp^{-1} \, \sum\limits_{n \in \Z} \erf \lp i \lp n+\frac{r}{2N} \rp \sqrt{2 \pi N V}  \rp 
		e^{2 \pi i N \left(n+\frac{r}{2N}\right)^2 (\varrho+iV)  } \right) \notag \\
		 =& \sum\limits_{r=1}^{N-1} \chi_{j,r} (N,M) \; \eta \lp \varrho+iV\rp^{-1} \left( - \sum\limits_{n \in \Z} \erf \lp i \lp n+\frac{r}{2N} \rp \sqrt{2 \pi N V}  \rp 
		e^{2 \pi i N \left(n+\frac{r}{2N}\right)^2 (\varrho+iV)  } \right).
	\end{align}
\end{small}
\hspace{-0.25cm} We see that the first term of \eqref{obstruction as sum} cancels against the contribution of $\CA_{r,N}  ( \frac{a \tau + b}{c \tau + d} ) $, so we focus on the second term of \eqref{obstruction as sum} and define
\begin{equation*}
\CI_{j,N,\varrho} (\varrho + iV) \coloneqq  
- \sum_{n \in \Z}  \erf \lp i \lp n+\frac{j}{2N} \rp \sqrt{2 \pi N V}  \rp 
e^{2 \pi i N \left(n+\frac{j}{2N}\right)^2 (\varrho+iV)  } ,
\end{equation*}
which is basically our error of modularity plus the holomorphic part of our function.
Using the identity (for $s \in \R \setminus \{ 0\}$ and $\operatorname{Re}(V) > 0$)
\begin{small}
	\begin{align*}
e^{- \pi s^2 V} \erf \lp i s \sqrt{\pi V} \rp =& 
- \frac{i}{\pi} \, \mathrm{P.V.} \int_{-\infty}^\infty \frac{e^{-\pi V x^2}}{x-s} \, dx 
 \coloneqq -\frac{i}{\pi} \lim\limits_{\varepsilon\rightarrow0^+} \left(\int_{-\infty}^{s-\varepsilon} \frac{e^{-\pi V x^2}}{x-s} \, dx + \int_{s+\varepsilon}^\infty \frac{e^{-\pi V x^2}}{x-s} \, dx\right),
\end{align*}
\end{small}
\hspace{-0.25cm} we obtain
\begin{equation}\label{representation of I}
\CI_{j,N,\varrho} (\varrho + iV) = \frac{i}{\pi} \sum_{n \in \Z} e^{2 \pi i N \left(n+\frac{j}{2N}\right)^2\varrho }
 \, \mathrm{P.V.}
 \int_{-\infty}^\infty \frac{e^{-2 \pi N V x^2}}{x-\lp n + \frac{j}{2N} \rp } \,dx.
\end{equation}

\subsection{Splitting of the Mordell-type integral}\label{subsection: Splitting of the Mordell-type integral}
Let $\varrho = \frac{h'}{k}$ with $h',k \in \Z$, $\mathrm{gcd}(h',k)=1$, and $k>0$. For a real number $d$ with $0\leq d<N$ such that $2\sqrt{dN}\notin\Z\backslash\{0\}$ we split $\CI_{j,N,\frac{h'}{k}}$ as follows
\begin{equation*}
e^{2 \pi d V} \CI_{j,N,\frac{h'}{k}} \lp \frac{h'}{k} + iV \rp = \CI_{j,N,\frac{h'}{k},d}^* \lp \frac{h'}{k} + iV \rp+\CI_{j,N,\frac{h'}{k},d}^e \lp \frac{h'}{k} + iV \rp,
\end{equation*}
where
\begin{align}\label{representation of I^*}
\CI_{j,N,\frac{h'}{k},d}^* \lp \frac{h'}{k} + iV \rp &= \frac{i}{\pi}  e^{2 \pi d V} \sum_{n \in \Z} e^{2 \pi i N \left(n+\frac{j}{2N}\right)^2\frac{h'}{k} }
\, \mathrm{P.V.} 
\int_{-\sqrt{\frac dN}}^{\sqrt{\frac dN}} \frac{e^{-2 \pi N V x^2}}{x-\lp n + \frac{j}{2N} \rp } \, dx, \\ 
\label{representation of I^e}
\CI_{j,N,\frac{h'}{k},d}^e \lp \frac{h'}{k} + iV \rp &= \frac{i}{\pi}  e^{2 \pi d V} \sum_{n \in \Z} e^{2 \pi i N \left(n+\frac{j}{2N}\right)^2\frac{h'}{k} }
\, \mathrm{P.V.} 
\int_{|x|\geq\sqrt{\frac dN}} \frac{e^{-2 \pi N V x^2}}{x-\lp n + \frac{j}{2N} \rp } \, dx  . 
\end{align}
Note that the assumption $2\sqrt{dN}\notin\Z\backslash\{0\}$ ensures the well-definedness of the principal value integral, since we avoid having poles on the boundary.

\section{Proof of Theorem \ref{Thm Kloosterman bound}}\label{section: Bound Kloosterman sum}
In this section we prove Theorem \ref{Thm Kloosterman bound} by using a bound of Malishev, which we stated as Lemma \ref{Lemma: Malishev} in the introduction of this paper.

We note that $K_{k,j,N}(n,r,\kappa)$ from \eqref{Kloostermansum} is well-defined and a Kloosterman sum of modulus $k$, which follows from a lengthy but straightforward calculation using the Chinese Remainder Theorem, quadratic reciprocity, and some formulas on the Kronecker symbol. Thus we can rewrite it as 
\begin{small}
	\begin{align*}
		K_{k,j,N}(n,r,\kappa)=& \sum_{\substack{h\pmod{k} \\ \gcd(h,k)=1 }}  \chi_{j,r} (N,M_{h,k})  \exp\left(-\frac{2\pi i}{k}\left(\left(n+\frac{j^2}{4N}-\frac{1}{24}\right)h - \left(N\left(\kappa+\frac{r}{2N}\right)^2-\frac{1}{24}\right)h'\right) \right).
	\end{align*}
\end{small}

	Note that for even $k$ we have
	\begin{small}
		\begin{align*}
			\chi_{j,r}(N,M_{h,k})
			=& \left(\frac{k}{-h}\right) \exp\left(\frac{\pi i}{12}\left(h'k\left(1-(-h)^2\right)+(-h)\left(-\frac{hh'+1}{k}-k+3\right)-3\right)\right) \\
			&\times \exp\left( \frac{3\pi i}{4}
			\right) \sqrt{\frac{2}{Nk}} \sum_{s=0}^{k-1} \exp\left(-\frac{\pi i}{2Nk} \left(-h(2Ns+j)^2+h'r^2\right)\right) \sin\left(\frac{\pi r(2Ns +j)}{Nk}\right),
		\end{align*}
	\end{small}
	\hspace{-0.25cm} while for odd $k$ we have
	\begin{small}
		\begin{align*}
			\chi_{j,r} (N,M_{h,k})  
			=& \left(\frac{-h}{k}\right) \exp\left(\frac{\pi i}{12} \left((h'-h)k-\frac{hh'+1}{k}h\left(k^2-1\right)-3k\right)\right) \\
			&\times \exp\left( \frac{3\pi i}{4}
			\right) \sqrt{\frac{2}{Nk}}\sum_{s=0}^{k-1} \exp\left(-\frac{\pi i}{2Nk} \left(-h(2Ns+j)^2+h'r^2\right)\right) \sin\left(\frac{\pi r(2Ns +j)}{Nk}\right) .
		\end{align*}
	\end{small}
	
	The strategy of the proof is to rewrite our Kloosterman sum into a sort of \textit{Sali\'e sum} 
	$$K_{k,j,N}(n,r,\kappa) = \epsilon(k,j,N,r)   \sum_{\substack{h\pmod{Gk} \\ \gcd(h,Gk)=1}} \left(\frac{h}{\rho}\right)\exp\left(\frac{2\pi i}{Gk}\left(\mu_* h-\nu_* [h]_{Gk}'\right)\right) ,$$ 
	where $\mu_*,\nu_*\in\Z$, $G\in\N$, $\rho\in\N$ odd such that all his prime divisors divide $Gk$, $[h]_{Gk}'$ the negative modular inverse of $h$ modulo $Gk$, and some $\epsilon(k,j,N,r)=O_N(1)$. Then we bound it using \cite[equation (12)]{knopp1964kloosterman}. Note that we use the $[\cdot]_{\cdot}$ notation from now on to denote the negative modular inverse of given modulus.

	We write
	\begin{align*}
		\sin\left(\frac{\pi r(2Ns +j)}{Nk}\right) = \frac{1}{2i}\left(\exp\left( \frac{\pi ir(2Ns +j)}{Nk}\right)-\exp\left(-\frac{\pi ir(2Ns +j)}{Nk}\right)\right),
	\end{align*}
	which yields that 
	\begin{align*}
			&\sum_{s=0}^{k-1} \exp\left(-\frac{\pi i}{2Nk} \left(-h(2Ns+j)^2+h'r^2\right)\right) \sin\left(\frac{\pi r(2Ns +j)}{Nk}\right) \notag \\ 
			=& \frac{1}{2i} \sum_{s=0}^{k-1} \left(\exp\left(\frac{2\pi i}{k} \left(hNs^2+\left(hj+r\right)s\right)\right)\exp\left(\frac{2\pi i}{4Nk}\left(hj^2-h'r^2+2rj\right)\right) \right. \notag \\
			&\left.\hspace{10mm}- \exp\left(\frac{2\pi i}{k} \left(hNs^2+\left(hj-r\right)s\right)\right)\exp\left(\frac{2\pi i}{4Nk}\left(hj^2-h'r^2-2rj\right)\right)\right).
		\end{align*}
	We additionally see that this equals 
	\begin{footnotesize}
		\begin{align}\label{equation: rewriting sum in character 2}
			\frac{1}{2i} \left( \exp\left(\frac{2\pi i}{4Nk}\left(hj^2-h'r^2+2rj\right)\right)G\left(hN,hj+r,k\right) - \exp\left(\frac{2\pi i}{4Nk}\left(hj^2-h'r^2-2rj\right)\right)G\left(hN,hj-r,k\right)  \right) ,
		\end{align}
	\end{footnotesize}
	\hspace{-0.25cm} where 
	$$G\left(a,b,c\right)\coloneqq \sum_{s=0}^{c-1} \exp\left(2\pi i\frac{as^2+bs}{c}\right)$$ 
	denotes the \textit{generalized quadratic Gauss sum}\footnote{Note that this sum is well-defined for any $a,c\in\N$ and $b\pmod{c}$.}.
	From this point on we have to look at odd, respectively even, $k$ seperately.

\subsection{Odd $k$}

We have that
\begin{align*}
	\chi_{j,r} (N,M_{h,k})  
	=& \left(\frac{-h}{k}\right) \sqrt{\frac{2}{Nk}} \exp\left(2\pi i \left(\frac{1}{24}\left((h'-h)k-\frac{hh'+1}{k}h\left(k^2-1\right)-3k\right)+\frac{3}{8}\right)\right) \notag \\
	&\times \sum_{s=0}^{k-1} \exp\left(-\frac{\pi i}{2Nk} \left(-h(2Ns+j)^2+h'r^2\right)\right) \sin\left(\frac{\pi r(2Ns +j)}{Nk}\right).
\end{align*}
Using \eqref{equation: rewriting sum in character 2} we can thus rewrite this as 
\begin{tiny}
	\begin{align}\label{equation: character to exponential odd step 1}
		\chi_{j,r} (N,M_{h,k})  
		=& -i \left(\frac{-h}{k}\right) \sqrt{\frac{1}{2Nk}} \exp\left(2\pi i \left(\frac{1}{24}\left((h'-h)k-\frac{hh'+1}{k}h\left(k^2-1\right)-3k\right)+\frac{3}{8}+ \frac{1}{4Nk}\left(hj^2-h'r^2+2rj\right)\right)\right) \notag \\
		& \times G\left(hN,hj+r,k\right)\notag \\
		& +i \left(\frac{-h}{k}\right) \sqrt{\frac{1}{2Nk}} \exp\left(2\pi i \left(\frac{1}{24}\left((h'-h)k-\frac{hh'+1}{k}h\left(k^2-1\right)-3k\right)+\frac{3}{8}+\frac{1}{4Nk}\left(hj^2-h'r^2-2rj\right)\right)\right) \notag\\
		& \times G\left(hN,hj-r,k\right).
	\end{align}
\end{tiny}

Note that $\gcd(Nh,k)=\gcd(N,k)$, since $\gcd(h,k)=1$. For odd $k$ we obtain that
\begin{scriptsize}
	\begin{align}\label{Gauss sum odd k}
		& G(hN,hj\pm r,k) \notag \\
		=& \begin{cases}
			0 & \text{ if } \gcd(N,k)>1, \text{ and } \gcd(N,k)\nmid (hj \pm r),\\
			\gcd(N,k) G\left(\frac{Nh}{\gcd(N,k)}, \frac{hj \pm r}{\gcd(N,k)},\frac{k}{\gcd(N,k)}\right) & \text{ if } \gcd(N,k)>1, \text{ and } \gcd(N,k)\mid (hj \pm r),\\
			\varepsilon_{k} \sqrt{k} \left(\frac{Nh}{k}\right) \exp\left(-2\pi i \frac{\psi\left(Nh\right)\left(hj\pm r\right)^2}{k}\right) & \text{ if } \gcd(N,k)=1,
		\end{cases} \notag \\
		=& \begin{cases}
			0 & \text{ if }  \gcd(N,k)\nmid (hj \pm r),\\
			\gcd(N,k)\varepsilon_{\frac{k}{\gcd(N,k)}} \sqrt{\frac{k}{\gcd(N,k)}} \left(\frac{\frac{Nh}{\gcd(N,k)}}{\frac{k}{\gcd(N,k)}}\right) \exp\left(-2\pi i \frac{\psi^*\left(\frac{Nh}{\gcd(N,k)}\right)\left(\frac{hj\pm r}{\gcd(N,k)}\right)^2}{\frac{k}{\gcd(N,k)}}\right) & \text{ otherwise},
		\end{cases}
	\end{align}
\end{scriptsize}
\hspace{-0.25cm} where $\psi(a)$, respectively $\psi^*(a)$, is some number satisfying\footnote{Note that $\psi(a)$, respectively $\psi^*(a)$, exists, since we assumed that $k$, and thus $\frac{k}{\gcd(N,k)}$, is odd and that $\gcd(Nh,k)=1$ by assumption of the first case, respectively $\gcd(Nh,\frac{k}{\gcd(N,k)})=1$.}  $4\psi(a)a\equiv 1 \pmod{k}$, respectively  $4\psi^*(a)a\equiv 1 \quad\!\!(\!\!\!\mod{\frac{k}{\gcd(N,k)}})$, and 
\begin{align} \label{definition epsilon_m}
	\varepsilon_m \coloneqq \begin{cases}
		1 & \text{ if } m \equiv 1 \pmod{4},\\
		i & \text{ if } m \equiv 3 \pmod{4},
	\end{cases}
\end{align}
for every odd integer $m$. 
We can thus rewrite \eqref{equation: character to exponential odd step 1}  as
\begin{tiny}
	\begin{align}\label{equation: rewrite exponential odd step 0}
		\chi_{j,r} (N,M_{h,k})  
		=& -i \left(\frac{-h}{k}\right) \sqrt{\frac{1}{2Nk}} \exp\left(2\pi i \left(\frac{1}{24}\left((h'-h)k-\frac{hh'+1}{k}h\left(k^2-1\right)-3k\right)+\frac{3}{8}+ \frac{1}{4Nk}\left(hj^2-h'r^2+2rj\right)\right)\right) \notag \\
		& \times \gcd(N,k) \varepsilon_{\frac{k}{\gcd(N,k)}} \sqrt{\frac{k}{\gcd(N,k)}} \left(\frac{\frac{Nh}{\gcd(N,k)}}{\frac{k}{\gcd(N,k)}}\right) \exp\left(-2\pi i \frac{\psi^*\left(\frac{Nh}{\gcd(N,k)}\right)\left(\frac{hj+r}{\gcd(N,k)}\right)^2}{\frac{k}{\gcd(N,k)}}\right) \delta_{\gcd(N,k)\mid (hj+r)} \notag \\
		& +i \left(\frac{-h}{k}\right) \sqrt{\frac{1}{2Nk}} \exp\left(2\pi i \left(\frac{1}{24}\left((h'-h)k-\frac{hh'+1}{k}h\left(k^2-1\right)-3k\right)+\frac{3}{8}+\frac{1}{4Nk}\left(hj^2-h'r^2-2rj\right)\right)\right) \notag \\
		& \times \gcd(N,k) \varepsilon_{\frac{k}{\gcd(N,k)}} \sqrt{\frac{k}{\gcd(N,k)}} \left(\frac{\frac{Nh}{\gcd(N,k)}}{\frac{k}{\gcd(N,k)}}\right) \exp\left(-2\pi i \frac{\psi^*\left(\frac{Nh}{\gcd(N,k)}\right)\left(\frac{hj-r}{\gcd(N,k)}\right)^2}{\frac{k}{\gcd(N,k)}}\right) \delta_{\gcd(N,k)\mid (hj-r)} ,
	\end{align}
\end{tiny}
\hspace{-0.25cm} using
 \begin{align*}
 	\delta_{\text{condition}} \coloneqq \begin{cases}
1 & \text{ if this condition is true},\\
0 & \text{ otherwise,}
\end{cases}
 \end{align*}
here and throughout the rest of the paper.
By definition we have that 
$$-4a\psi^*(a)\equiv -1\pmod{\frac{k}{\gcd(N,k)}},$$ 
which gives us that 
$$\psi^*(a)=[-4a]_{\frac{k}{\gcd(N,k)}}'.$$ 
Using that
$[ab]_x' = -[a]_x'[b]_x',$
for any modulus $x\in\N$ and arbitrary $a,b\in\N$, 
we obtain that $\psi^*(a) =[4]_{\frac{k}{\gcd(N,k)}}'[a]_{\frac{k}{\gcd(N,k)}}'$ and thus
$$ \psi^*\left(\frac{Nh}{\gcd(N,k)}\right) =[4]_{\frac{k}{\gcd(N,k)}}' \left[\frac{Nh}{\gcd(N,k)}\right]_{\frac{k}{\gcd(N,k)}}' = -[4]_{\frac{k}{\gcd(N,k)}}' \left[\frac{N}{\gcd(N,k)}\right]_{\frac{k}{\gcd(N,k)}}' [h]_{\frac{k}{\gcd(N,k)}}'.$$
Note that \eqref{equation: rewrite exponential odd step 0} is well-defined for $\left[a\right]'_{\frac{k}{\gcd(N,k)}}$ a solution of $a\left[a\right]'_{\frac{k}{\gcd(N,k)}} \equiv -1 ~(\!\!\!\mod{\frac{k}{\gcd(N,k)}})$, since we have that 
\begin{small}
	\begin{align*}
		\exp\left(-2\pi i \frac{\psi^*\left(\frac{Nh}{\gcd(N,k)}\right)\left(\frac{hj\pm r}{\gcd(N,k)}\right)^2}{\frac{k}{\gcd(N,k)}}\right) =  \exp\left(-2\pi i \frac{-[4]_{\frac{k}{\gcd(N,k)}}' \left[\frac{N}{\gcd(N,k)}\right]_{\frac{k}{\gcd(N,k)}}' [h]_{\frac{k}{\gcd(N,k)}}'\left(\frac{hj\pm r}{\gcd(N,k)}\right)^2}{\frac{k}{\gcd(N,k)}}\right) 
	\end{align*} 
\end{small}
\hspace{-0.25cm} is invariant under any shifts by $\frac{k}{\gcd(N,k)}$, since $\frac{hj\pm r}{\gcd(N,k)}\in\Z$.

For simplicity we stick to the notation $[h]_{k}'=h'$. We obtain 
\begin{tiny}
	\begin{align*}
		\chi_{j,r}& (N,M_{h,k}) \\
		=& -i  \varepsilon_{\frac{k}{\gcd(N,k)}}  \left(\frac{-h}{k}\right) \left(\frac{\frac{Nh}{\gcd(N,k)}}{\frac{k}{\gcd(N,k)}}\right) \sqrt{\frac{\gcd(N,k)}{2N}} \exp\left(-2\pi i \frac{-[4]_{\frac{k}{\gcd(N,k)}}' \left[\frac{N}{\gcd(N,k)}\right]_{\frac{k}{\gcd(N,k)}}' [h]_{\frac{k}{\gcd(N,k)}}'\left(\frac{hj+ r}{\gcd(N,k)}\right)^2}{\frac{k}{\gcd(N,k)}}\right)   \notag \\
		& \times  \exp\left(2\pi i \left(\frac{1}{24}\left((h'-h)k-\frac{hh'+1}{k}h\left(k^2-1\right)-3k\right)+\frac{3}{8}+ \frac{1}{4Nk}\left(hj^2-h'r^2+2rj\right)\right)\right) \delta_{\gcd(N,k)\mid (hj+r)}  \notag \\
		& +i \varepsilon_{\frac{k}{\gcd(N,k)}}  \left(\frac{-h}{k}\right) \left(\frac{\frac{Nh}{\gcd(N,k)}}{\frac{k}{\gcd(N,k)}}\right)  \sqrt{\frac{ \gcd(N,k)}{2N}} \exp\left(-2\pi i \frac{-[4]_{\frac{k}{\gcd(N,k)}}' \left[\frac{N}{\gcd(N,k)}\right]_{\frac{k}{\gcd(N,k)}}' [h]_{\frac{k}{\gcd(N,k)}}'\left(\frac{hj- r}{\gcd(N,k)}\right)^2}{\frac{k}{\gcd(N,k)}}\right)   \notag \\
		& \times   \exp\left(2\pi i \left(\frac{1}{24}\left((h'-h)k-\frac{hh'+1}{k}h\left(k^2-1\right)-3k\right)+\frac{3}{8}+\frac{1}{4Nk}\left(hj^2-h'r^2-2rj\right)\right)\right) \delta_{\gcd(N,k)\mid (hj-r)} 
	\end{align*}
\end{tiny}
\hspace{-0.25cm} and see that
\begin{align*}
	\left(\frac{-h}{k}\right) \left(\frac{\frac{Nh}{\gcd(N,k)}}{\frac{k}{\gcd(N,k)}}\right) = \left(\frac{-h}{k}\right) \left(\frac{-h}{\frac{k}{\gcd(N,k)}}\right) \left(\frac{\frac{-N}{\gcd(N,k)}}{\frac{k}{\gcd(N,k)}}\right) =  \left(\frac{-h}{\gcd(N,k)}\right)\left(\frac{\frac{-N}{\gcd(N,k)}}{\frac{k}{\gcd(N,k)}}\right).
\end{align*}
Therefore our Kloosterman sum equals 
\begin{tiny}
	\begin{align*}
		&K_{k,j,N}(n,r,\kappa) \\
		=&  i  \varepsilon_{\frac{k}{\gcd(N,k)}}  \left(\frac{\frac{-N}{\gcd(N,k)}}{\frac{k}{\gcd(N,k)}}\right) \sqrt{\frac{\gcd(N,k)}{2N}} \exp\left(\frac{2\pi i}{24k}\left(-3k^2 +9k \right)\right)  \sum_{\substack{h\pmod{k} \\ \gcd(h,k)=1 }} \left(\frac{-h}{\gcd(N,k)}\right) \\
		& \times  \exp\left(\frac{2\pi i}{24k}\left(\left(-24n+2-2k^2\right)h - \left(-24N\kappa^2-24\kappa r +1- k^2\right)h'\right) \right) \\
		& \times \left( \delta_{\gcd(N,k)\mid (hj-r)} \exp\left(2\pi i \frac{[4]_{\frac{k}{\gcd(N,k)}}' \left[\frac{N}{\gcd(N,k)}\right]_{\frac{k}{\gcd(N,k)}}' [h]_{\frac{k}{\gcd(N,k)}}'\left(\frac{hj- r}{\gcd(N,k)}\right)^2}{\frac{k}{\gcd(N,k)}}\right) \exp\left(\frac{2\pi i}{24k}\left(-h^2h'k^2+h^2h'-\frac{12rj}{N}\right)\right) \right. \\
		& \left. - \delta_{\gcd(N,k)\mid (hj+r)} \exp\left(2\pi i \frac{[4]_{\frac{k}{\gcd(N,k)}}' \left[\frac{N}{\gcd(N,k)}\right]_{\frac{k}{\gcd(N,k)}}' [h]_{\frac{k}{\gcd(N,k)}}'\left(\frac{hj+ r}{\gcd(N,k)}\right)^2}{\frac{k}{\gcd(N,k)}}\right) \exp\left(\frac{2\pi i}{24k}\left(-h^2h'k^2+h^2h'+\frac{12rj}{N}\right)\right) \right).
	\end{align*}
\end{tiny}

We already saw that the following is well-defined and now observe that
\begin{align*}
	&\exp\left(2\pi i \frac{[4]_{\frac{k}{\gcd(N,k)}}' \left[\frac{N}{\gcd(N,k)}\right]_{\frac{k}{\gcd(N,k)}}' [h]_{\frac{k}{\gcd(N,k)}}'\left(\frac{hj\pm r}{\gcd(N,k)}\right)^2}{\frac{k}{\gcd(N,k)}}\right) \notag \\
	=& \exp\left(\frac{2\pi i}{k\gcd(N,k)} \left([4]_{\frac{k}{\gcd(N,k)}}' \left[\frac{N}{\gcd(N,k)}\right]_{\frac{k}{\gcd(N,k)}}' [h]_{\frac{k}{\gcd(N,k)}}'\left(h^2j^2\pm 2hjr +r^2\right)\right) \right).
\end{align*}

Choose $[h]_{\frac{k}{\gcd(N,k)}}'=h'$ from now on\footnote{Note that $hh'\equiv -1 \pmod{k}$ implies that $hh'\equiv -1 ~(\!\!\!\mod{\frac{k}{\gcd(N,k)}})$, since $\frac{k}{\gcd(N,k)}\mid k$. Thus $h'$ is a possible choice for $\left[h\right]_{\frac{k}{\gcd(N,k)}}'$.}. Let $x\in\N$ such that $\gcd(x,h)=1$ (note that this condition is necessary to make sure that the negative modular inverse is well-defined) and $[h]_{xk}'$ the negative modular inverse of $h$ modulo $xk$, i.e.\@, 
$$h[h]_{xk}'\equiv -1 \pmod{xk}.$$ 
Then we see that we also have $h[h]_{xk}'\equiv -1 \pmod{k}$, since $k\mid xk$. This yields that 
$$h' \equiv [h]_{xk}' \pmod{k} .$$
Thus we can choose $h'$ such that $hh'\equiv -1 \pmod{xk}.$
Taking $x=\gcd(N,k)$
we obtain
\begin{tiny}
	\begin{align*}
		K_{k,j,N}&(n,r,\kappa) \\
		=& i  \varepsilon_{\frac{k}{\gcd(N,k)}}  \left(\frac{\frac{-N}{\gcd(N,k)}}{\frac{k}{\gcd(N,k)}}\right) \left(\frac{-1}{\gcd(N,k)}\right) \sqrt{\frac{\gcd(N,k)}{2N}} \exp\left(\frac{2\pi i}{24k}\left(-3k^2 +9k \right)\right)  \sum_{\substack{h\pmod{k} \\ \gcd(h,k)=1 }} \left(\frac{h}{\gcd(N,k)}\right) \\
		& \times  \exp\left(\frac{2\pi i}{24\gcd(N,k)k}\left(\left(\left(-24n+2-2k^2\right)\gcd(N,k) -24j^2[4]_{\frac{k}{\gcd(N,k)}}' \left[\frac{N}{\gcd(N,k)}\right]_{\frac{k}{\gcd(N,k)}}'\right)h \right. \right. \\
		&\left.\left. \hspace{2.5cm} - \left(\left(-24N\kappa^2-24\kappa r +1- k^2\right)\gcd(N,k) -24r^2[4]_{\frac{k}{\gcd(N,k)}}' \left[\frac{N}{\gcd(N,k)}\right]_{\frac{k}{\gcd(N,k)}}' \right)[h]_{k\gcd(N,k)}'\right) \right) \\
		& \times \left( \delta_{\gcd(N,k)\mid (hj-r)} \exp\left(\frac{2\pi i}{k\gcd(N,k)} \left([4]_{\frac{k}{\gcd(N,k)}}' \left[\frac{N}{\gcd(N,k)}\right]_{\frac{k}{\gcd(N,k)}}'  2jr \right) \right) \right. \\
		& \left. \times \exp\left(\frac{2\pi i}{24k}\left(h^2[h]_{k\gcd(N,k)}'\left(1-k^2\right)-\frac{12rj}{N}\right)\right) \right. \\
		& \left. - \delta_{\gcd(N,k)\mid (hj+r)} \exp\left(\frac{2\pi i}{k\gcd(N,k)} \left(-[4]_{\frac{k}{\gcd(N,k)}}' \left[\frac{N}{\gcd(N,k)}\right]_{\frac{k}{\gcd(N,k)}}'  2jr \right) \right) \right. \\
		& \left. \times \exp\left(\frac{2\pi i}{24k}\left(h^2[h]_{k\gcd(N,k)}'\left(1-k^2\right)+\frac{12rj}{N}\right)\right) \rule{0pt}{0.5cm}\right).
	\end{align*}
\end{tiny}

We now need to split into two cases, $3\nmid k$ and $3\mid k$. In the first case we have $1-k^2\equiv 0\pmod{24}$. Thus we obtain\footnote{Using that $h[h]_{k\gcd(N,k)}'\equiv -1 \pmod{k}$ since $k \mid (k\gcd(N,k))$.}
	\begin{align*}
		K_{k,j,N}(n,r,\kappa) = K_{k,j,N,+}(n,r,\kappa) + K_{k,j,N,-}(n,r,\kappa),
	\end{align*}
with 
\begin{scriptsize}
	\begin{align*}
		K_{k,j,N,\pm}(n,r,\kappa) \coloneqq& \mp i  \varepsilon_{\frac{k}{\gcd(N,k)}}  \left(\frac{\frac{-N}{\gcd(N,k)}}{\frac{k}{\gcd(N,k)}}\right) \left(\frac{-1}{\gcd(N,k)}\right) \sqrt{\frac{\gcd(N,k)}{2N}} \exp\left(\frac{2\pi i}{24k}\left(-3k^2 +9k \right)\right)  \notag \\
		&   \times  \exp\left(\frac{2\pi i}{24k\gcd(N,k)} \left(\mp48jr[4]_{\frac{k}{\gcd(N,k)}}' \left[\frac{N}{\gcd(N,k)}\right]_{\frac{k}{\gcd(N,k)}}'   \pm \frac{12rj}{N}\gcd(N,k) \right) \right) \notag \\
		&\times \sum_{\substack{h\pmod{k} \\ \gcd(h,k)=1 }} \left(\frac{h}{\gcd(N,k)}\right) \delta_{\gcd(N,k)\mid (hj\pm r)} \\
		& \times  \exp\left(\frac{2\pi i}{24\gcd(N,k)k}\left(\left(\left(-24n+1-k^2\right)\gcd(N,k) -24j^2[4]_{\frac{k}{\gcd(N,k)}}' \left[\frac{N}{\gcd(N,k)}\right]_{\frac{k}{\gcd(N,k)}}'\right)h \right. \right. \notag \\
		&\left.\left. \hspace{0.8cm} - \left(\left(-24N\kappa^2-24\kappa r +1- k^2\right)\gcd(N,k) -24r^2[4]_{\frac{k}{\gcd(N,k)}}' \left[\frac{N}{\gcd(N,k)}\right]_{\frac{k}{\gcd(N,k)}}' \right)[h]_{k\gcd(N,k)}'\right) \right).
	\end{align*}
\end{scriptsize}

We set
\begin{tiny}
	\begin{align*}
		K_{k,j,N,\pm}&(n,r,\kappa)\\
		\eqqcolon& \epsilon_{o,\pm}(k,j,N,r) \frac{1}{\gcd(N,k)}  \sum_{\substack{h\pmod{\gcd(N,k)k} \\ \gcd(h,\gcd(N,k)k)=1 }} \left(\frac{h}{\gcd(N,k)}\right)  \delta_{\gcd(N,k)\mid (hj\pm r)} \notag \\
		& \times  \exp\left(\frac{2\pi i}{\gcd(N,k)k}\left(\left(\left(-n+\frac{1-k^2}{24}\right)\gcd(N,k) -j^2[4]_{\frac{k}{\gcd(N,k)}}' \left[\frac{N}{\gcd(N,k)}\right]_{\frac{k}{\gcd(N,k)}}'\right)h \right. \right. \notag \\
		&\left.\left. \hspace{1.5cm} - \left(\left(-N\kappa^2-\kappa r +\frac{1- k^2}{24}\right)\gcd(N,k) -r^2[4]_{\frac{k}{\gcd(N,k)}}' \left[\frac{N}{\gcd(N,k)}\right]_{\frac{k}{\gcd(N,k)}}' \right)[h]_{k\gcd(N,k)}'\right) \right) \notag \\
		\eqqcolon& \epsilon_{o,\pm}(k,j,N,r) \frac{1}{\gcd(N,k)}  \sum_{\substack{h\pmod{\gcd(N,k)k} \\ \gcd(h,\gcd(N,k)k)=1 }} \left(\frac{h}{\gcd(N,k)}\right) \delta_{\gcd(N,k)\mid (hj\pm r)} \exp\left(\frac{2\pi i}{\gcd(N,k)k}\left( \mu_1h -\nu_1 [h]_{k\gcd(N,k)}'\right) \right)
	\end{align*}
\end{tiny}
\hspace{-0.25cm} and note that, by orthogonality of roots of unity, we have
\begin{align*}
	\delta_{\gcd(N,k)\mid (hj\pm r)} = \frac{1}{\gcd(N,k)} \sum_{s=0}^{\gcd(N,k)-1} \exp\left(2\pi i \frac{(hj\pm r)s}{\gcd(N,k)}\right),
\end{align*}
which finally gives us that
\begin{footnotesize}
	\begin{align}\label{equation: Kloosterman 3nmidk odd}
		K_{k,j,N,\pm}(n,r,\kappa) =& \epsilon_{o,\pm}(k,j,N,r) \frac{1}{\gcd(N,k)^2} \sum_{s=0}^{\gcd(N,k)-1} \exp\left(\pm 2\pi i \frac{rs}{\gcd(N,k)}\right) \notag \\
		& \times  \sum_{\substack{h\pmod{\gcd(N,k)k} \\ \gcd(h,\gcd(N,k)k)=1 }} \left(\frac{h}{\gcd(N,k)}\right) \exp\left(\frac{2\pi i}{\gcd(N,k)k}\left( \left(\mu_1+js k\right)h -\nu_1 [h]_{k\gcd(N,k)}'\right) \right).
	\end{align}
\end{footnotesize}

In the second case we have $1-k^2\equiv 0\pmod{8}$ and $3\nmid h$. Thus, choosing $[h]_{k\gcd(N,k)}'$ such that $h[h]_{k\gcd(N,k)}' \equiv -1 \pmod{3k\gcd(N,k)}$ analogously to above, we obtain\footnote{Using that $h[h]_{3k\gcd(N,k)}'\equiv -1 \pmod{3k}$ since $(3k) \mid (3k\gcd(N,k))$.}
\begin{tiny}
	\begin{align*}
		K_{k,j,N}(n,r,\kappa) =& i  \varepsilon_{\frac{k}{\gcd(N,k)}}  \left(\frac{\frac{-N}{\gcd(N,k)}}{\frac{k}{\gcd(N,k)}}\right) \left(\frac{-1}{\gcd(N,k)}\right) \sqrt{\frac{\gcd(N,k)}{2N}} \exp\left(\frac{2\pi i}{24k}\left(-3k^2 +9k \right)\right) \sum_{\substack{h\pmod{k} \\ \gcd(h,k)=1 }} \left(\frac{h}{\gcd(N,k)}\right) \notag \\
		& \times \left( \delta_{\gcd(N,k)\mid (hj-r)} \exp\left(\frac{2\pi i}{24k\gcd(N,k)} \left(48jr[4]_{\frac{k}{\gcd(N,k)}}' \left[\frac{N}{\gcd(N,k)}\right]_{\frac{k}{\gcd(N,k)}}'   -\frac{12rj}{N}\gcd(N,k) \right) \right)\right. \notag \\
		& \left. - \delta_{\gcd(N,k)\mid (hj+r)} \exp\left(\frac{2\pi i}{24k\gcd(N,k)} \left(-48jr[4]_{\frac{k}{\gcd(N,k)}}' \left[\frac{N}{\gcd(N,k)}\right]_{\frac{k}{\gcd(N,k)}}'  +\frac{12rj}{N}\gcd(N,k) \right) \right)  \right) \notag \\
		& \times  \exp\left(\frac{2\pi i}{24\gcd(N,k)k}\left(\left(\left(-24n+1-k^2\right)\gcd(N,k) -24j^2[4]_{\frac{k}{\gcd(N,k)}}' \left[\frac{N}{\gcd(N,k)}\right]_{\frac{k}{\gcd(N,k)}}'\right)h \right. \right. \notag \\
		&\left.\left. \hspace{1.5cm} - \left(\left(-24N\kappa^2-24\kappa r +1- k^2\right)\gcd(N,k) -24r^2[4]_{\frac{k}{\gcd(N,k)}}' \left[\frac{N}{\gcd(N,k)}\right]_{\frac{k}{\gcd(N,k)}}' \right)[h]_{3k\gcd(N,k)}'\right) \right) \notag \\
		\eqqcolon&  K^*_{k,j,N,-}(n,r,\kappa) + K^*_{k,j,N,+}(n,r,\kappa).
	\end{align*}
\end{tiny}	
\hspace{-0.25cm} Here we set
\begin{tiny}
	\begin{align*}
		K^*_{k,j,N,\pm}(n,r,\kappa)=&  \epsilon_{o,\pm}(k,j,N,r) \frac{1}{3\gcd(N,k)}  \sum_{\substack{h\pmod{3\gcd(N,k)k} \\ \gcd(h,3\gcd(N,k)k)=1 }} \left(\frac{h}{\gcd(N,k)}\right) \delta_{\gcd(N,k)\mid (hj+r)} \notag \\
		& \times  \exp\left(\frac{2\pi i}{3\gcd(N,k)k}\left(\left(\left(-3n+\frac{1-k^2}{8}\right)\gcd(N,k) -3j^2[4]_{\frac{k}{\gcd(N,k)}}' \left[\frac{N}{\gcd(N,k)}\right]_{\frac{k}{\gcd(N,k)}}'\right)h \right. \right. \notag \\
		&\left.\left. \hspace{1.5cm} - \left(\left(-3N\kappa^2-3\kappa r +\frac{1- k^2}{8}\right)\gcd(N,k) -3r^2[4]_{\frac{k}{\gcd(N,k)}}' \left[\frac{N}{\gcd(N,k)}\right]_{\frac{k}{\gcd(N,k)}}' \right)[h]_{3k\gcd(N,k)}'\right) \right) \notag \\
		\eqqcolon& \epsilon_{o,\pm}(k,j,N,r) \frac{1}{3\gcd(N,k)} \\
		&\times  \sum_{\substack{h\pmod{3\gcd(N,k)k} \\ \gcd(h,3\gcd(N,k)k)=1 }} \left(\frac{h}{\gcd(N,k)}\right) \delta_{\gcd(N,k)\mid (hj+r)} \exp\left(\frac{2\pi i}{3\gcd(N,k)k}\left( \mu_2h -\nu_2 [h]_{3k\gcd(N,k)}'\right) \right)
	\end{align*}
\end{tiny}
\hspace{-0.25cm} and, by orthogonality of roots of unity, we finally have
\begin{footnotesize}
	\begin{align}\label{equation: Kloosterman 3midk odd}
		K^*_{k,j,N,\pm}(n,r,\kappa) =& \epsilon_{o,\pm}(k,j,N,r) \frac{1}{3\gcd(N,k)^2} \sum_{s=0}^{\gcd(N,k)-1} \exp\left(\pm 2\pi i \frac{rs}{\gcd(N,k)}\right) \notag \\
		& \times  \sum_{\substack{h\pmod{3\gcd(N,k)k} \\ \gcd(h,3\gcd(N,k)k)=1 }} \left(\frac{h}{\gcd(N,k)}\right) \exp\left(\frac{2\pi i}{3\gcd(N,k)k}\left( \left(\mu_2+3js k\right)h -\nu_2 [h]_{3k\gcd(N,k)}'\right) \right).
	\end{align}
\end{footnotesize}

Since in \eqref{equation: Kloosterman 3nmidk odd} and \eqref{equation: Kloosterman 3midk odd} both sums over $h$ are of the required shape we can bound them using Malishev's result (see  Lemma \ref{Lemma: Malishev}) and obtain that they are
\begin{small}
	\begin{align*}
		\begin{cases}
			O\left((\gcd(N,k)k)^{\frac{1}{2}+\varepsilon} \min\left(\gcd\left(\mu_1+js k,\gcd(N,k)k\right)^\frac{1}{2}, \gcd\left(\nu_1,\gcd(N,k)k\right)^\frac{1}{2}\right)\right) & \text{ if } 3\nmid k, \\
			O\left((3\gcd(N,k)k)^{\frac{1}{2}+\varepsilon} \min\left(\gcd\left(\mu_2+ 3j s k,3\gcd(N,k)k\right)^\frac{1}{2}, \gcd\left(\nu_2,3\gcd(N,k)k\right)^\frac{1}{2}\right)\right) & \text{ if } 3\mid k,
		\end{cases} 
	\end{align*}
\end{small}
\hspace{-0.25cm} for $\varepsilon>0$.
We see that $\gcd(N,k)\leq N=O_N(1)$,
\begin{align*}
	&\min\left(\gcd\left(\mu_1+jsk,\gcd(N,k)k\right)^\frac{1}{2}, \gcd\left(\nu_1,\gcd(N,k)k\right)^\frac{1}{2}\right) = O_N\left(n^\frac{1}{2}\right),
\end{align*}
and 
\begin{align*}
	\min\left(\gcd\left(\mu_2+3jsk,3\gcd(N,k)k\right)^\frac{1}{2}, \gcd\left(\nu_2,3\gcd(N,k)k\right)^\frac{1}{2}\right) =  O_N\left(n^\frac{1}{2}\right).
\end{align*}
Thus we showed that
\begin{small}
	\begin{align*}
		K_{k,j,N,\pm}(n,r,\kappa) =&
		O_N\left( \left|\epsilon_{o,\pm}(k,j,N,r) \frac{1}{\gcd(N,k)^2} \right|\sum\limits_{s=0}^{\gcd(N,k)-1} \left|\exp\left(\pm 2\pi i \frac{rs}{\gcd(N,k)}\right)\right| n^\frac{1}{2}k^{\frac{1}{2}+\varepsilon} \right) \\
		=& O_N\left(n^\frac{1}{2}k^{\frac{1}{2}+\varepsilon} \right)
	\end{align*}
\end{small}
\hspace{-0.25cm} and analogously $K_{k,j,N,\pm}^*(n,r,\kappa) = O_N(n^\frac{1}{2}k^{\frac{1}{2}+\varepsilon} )$, which yields 
\begin{align*}
	K_{k,j,N}(n,r,\kappa) =& O_N\left(n^\frac{1}{2}k^{\frac{1}{2}+\varepsilon} \right)
\end{align*}
and finishes the proof for odd $k$.

\subsection{Even $k$}

	We go on with the case of even $k$ and have that
	\begin{small}
		\begin{align*}
			\chi_{j,r}&(N,M_{h,k}) \\
			=& -i \left(\frac{k}{-h}\right) \sqrt{\frac{1}{2Nk}} \exp\left(2 \pi i\left(\frac{1}{24}\left(h'k\left(1-(-h)^2\right)+(-h)\left(-\frac{hh'+1}{k}-k+3\right)-3\right)+\frac{3}{8}\right)\right) \\
			& \times \exp\left( \frac{2\pi i }{4Nk}\left(hj^2-h'r^2+2rj\right)\right)  G\left(hN,hj+r,k\right)\\
			& +i \left(\frac{k}{-h}\right) \sqrt{\frac{1}{2Nk}}\exp\left(2 \pi i\left(\frac{1}{24}\left(h'k\left(1-(-h)^2\right)+(-h)\left(-\frac{hh'+1}{k}-k+3\right)-3\right)+\frac{3}{8}\right)\right) \\
			& \times \exp\left( \frac{2\pi i }{4Nk}\left(hj^2-h'r^2-2rj\right)\right) G\left(hN,hj-r,k\right),
		\end{align*}
	\end{small}
	\hspace{-0.25cm} using \eqref{equation: rewriting sum in character 2}.  
	For even $k$ we can write $k=2^\nu \mu$ with $\nu\geq 1$ and $\mu$ odd. Using the multiplicativity of the generalized quadratic Gauss sum\footnote{For given $a,c,d \in\N$, $b\pmod{c}$ and $\gcd(c,d)=1$ we have that $ G(a,b,cd) = G(ac,b,d) G(ad,b,c).$}, we thus have that
	\begin{align*}
		& G(hN,hj\pm r,k) = G(hN,hj\pm r,2^\nu \mu) = G(hN2^\nu,hj\pm r, \mu) G(hN \mu,hj\pm r,2^\nu)  .
	\end{align*}
	Defining $\alpha \coloneqq \max(x:2^x\mid (hN\mu))=\max(x:2^x\mid N)$ we obtain
	\begin{tiny}
		\begin{align*}
			G(hN \mu,hj\pm r,2^\nu)& \\
			&\hspace{-0.8cm}= \begin{cases}
				2^\nu &\text{ if }\nu-\alpha=1 \text{ and } hj\pm r\not\equiv 0 \pmod{2}, \\
				2^{\frac{\nu+\alpha}{2}} (i+1) \left(\frac{-2^{\nu+\alpha}}{\frac{hN \mu}{2^\alpha}}\right) \varepsilon_{\frac{hN \mu}{2^\alpha}} \exp\left(-2\pi i \frac{\left[\frac{hN \mu}{2^\alpha}\right]_{2^{\nu+\alpha+2}}' \frac{(hj\pm r)^2}{4}}{2^{\nu+\alpha}}\right)  & \text{ if }\nu-\alpha>1 \text{ and } hj\pm r\equiv 0 \pmod{2^{\alpha+1}}, \\
				0 & \text{ otherwise}.
			\end{cases}
		\end{align*}
	\end{tiny}
	\hspace{-0.25cm} Noting that $\gcd(hN2^\nu,\mu)= \gcd(hN,\mu)=\gcd(N,\mu)$, $hj\pm r \equiv hj+ r \pmod{2}$, and combining this with \eqref{Gauss sum odd k} yields 
	\begin{tiny}
		\begin{align*}
			G(hN ,hj\pm r,2^\nu \mu)& \\
			&\hspace{-0.8cm} = \begin{cases}
				2^\nu A_{\pm} &\text{ if }\nu-\alpha=1,~ hj + r\not\equiv 0 \pmod{2}, \\
				&~\text{ and } \gcd(N,\mu) \mid (hj\pm r), \\
				2^{\frac{\nu+\alpha}{2}} A_\pm (i+1) \left(\frac{-2^{\nu+\alpha}}{\frac{hN \mu}{2^\alpha}}\right) \varepsilon_{\frac{hN \mu}{2^\alpha}}  \exp\left(-2\pi i \frac{\left[\frac{hN \mu}{2^\alpha}\right]_{2^{\nu+\alpha+2}}' \frac{(hj\pm r)^2}{4}}{2^{\nu+\alpha}}\right)   & \text{ if }\nu-\alpha>1,~ hj\pm r\equiv 0 \pmod{2^{\alpha+1}}, \\[-8pt]
				&~\text{ and } \gcd(N,\mu) \mid (hj\pm r), \\[7pt]
				0 & \text{ otherwise},
			\end{cases}
		\end{align*}
	\end{tiny}
	\hspace{-0.25cm} with 
	\begin{small}
		\begin{align*}
			A_{\pm}\coloneqq \gcd(N,\mu)\varepsilon_{\frac{\mu}{\gcd(N,\mu)}} \sqrt{\frac{\mu}{\gcd(N,\mu)}} \left(\frac{\frac{Nh2^\nu}{\gcd(N,\mu)}}{\frac{\mu}{\gcd(N,\mu)}}\right) \exp\left(-2\pi i \frac{\widetilde{\psi}\left(\frac{Nh2^\nu}{\gcd(N,\mu)}\right)\left(\frac{hj\pm r}{\gcd(N,\mu)}\right)^2}{\frac{\mu}{\gcd(N,\mu)}}\right),
		\end{align*}
	\end{small}
	\hspace{-0.25cm} $\varepsilon_{\frac{hN \mu}{2^\alpha}}$ and $\varepsilon_{\frac{\mu}{\gcd(N,\mu)}}$ as in \eqref{definition epsilon_m},
	and where $\widetilde{\psi}(a)$ is some number satisfying $4\widetilde{\psi}(a)a\equiv 1 ~(\!\!\!\mod{\frac{\mu}{\gcd(N,\mu)}})$.
	
	Note that in the first case we have that $\nu=\alpha+1$ which gives us that $2^\nu =2^{\alpha+1}\leq 2N$, and allows us to say that $2^\nu=O_N(1)$.
	
	Using that for even $k$ the $h$ we are summing over have to be odd we split our Kloosterman sum as follows
	\begin{footnotesize}
		\begin{align*}
			 K_{k,j,N}&(n,r,\kappa)\\
			=& \left(\delta_{\substack{\nu-\alpha=1 \\ j\not\equiv r \pmod{2}}}\left(\sum_{\substack{0\leq h < k \\ \gcd(h,k)=1 \\ \gcd(N,\mu)\mid (hj+r)}}+  \sum_{\substack{0\leq h < k \\ \gcd(h,k)=1 \\ \gcd(N,\mu)\mid (hj-r )}}\right) + \delta_{\nu-\alpha>1} \left(\sum_{\substack{0\leq h < k \\ \gcd(h,k)=1 \\ \gcd(N,\mu)2^{\alpha +1}\mid (hj+r) }}  +  \sum_{\substack{0\leq h < k \\ \gcd(h,k)=1 \\ \gcd(N,\mu)2^{\alpha +1}\mid (hj-r) }} \right) \right) \\
			& \times \chi_{j,r} (N,M_{h,k})  \zeta_{24k}^{\left(24N\left(\kappa+\frac{r}{2N}\right)^2-1\right)h'-24\left(n+\frac{j^2}{4N}-\frac{1}{24}\right)h  }\\
			\eqqcolon&  K_{k,j,N,1,+}(n,r,\kappa) + K_{k,j,N,1,-}(n,r,\kappa)  + K_{k,j,N,2,+}(n,r,\kappa)  + K_{k,j,N,2,-}(n,r,\kappa) \\
			\eqqcolon&  K_{k,j,N,1}(n,r,\kappa)  + K_{k,j,N,2}(n,r,\kappa).
		\end{align*}
	\end{footnotesize}

	For $K_{k,j,N,1}(n,r,\kappa)$ we can run a similar calculation as in the odd $k$ case.
	By definition we have that $-4a\widetilde{\psi}(a)\equiv -1~(\!\!\!\mod{\frac{\mu}{\gcd(N,\mu)}})$, which gives us that 
	$$\widetilde{\psi}(a)=[-4a]_{\frac{\mu}{\gcd(N,\mu)}}'=[4]_{\frac{\mu}{\gcd(N,\mu)}}'[a]_{\frac{\mu}{\gcd(N,\mu)}}'$$ 
	and thus
	$$ \widetilde{\psi}\left(\frac{Nh2^\nu}{\gcd(N,\mu)}\right) =[4]_{\frac{\mu}{\gcd(N,\mu)}}' \left[\frac{Nh2^\nu}{\gcd(N,\mu)}\right]_{\frac{\mu}{\gcd(N,\mu)}}' = -[4]_{\frac{\mu}{\gcd(N,\mu)}}' \left[\frac{N2^\nu}{\gcd(N,\mu)}\right]_{\frac{\mu}{\gcd(N,\mu)}}' [h]_{\frac{\mu}{\gcd(N,\mu)}}'.$$
	Note that 
	\begin{small}
		\begin{align*}
			\exp\left(-2\pi i \frac{\widetilde{\psi}\left(\frac{Nh2^\nu}{\gcd(N,\mu)}\right)\left(\frac{hj \pm r}{\gcd(N,\mu)}\right)^2}{\frac{\mu}{\gcd(N,\mu)}}\right) =  \exp\left(-2\pi i \frac{-[4]_{\frac{\mu}{\gcd(N,\mu)}}' \left[\frac{N2^\nu}{\gcd(N,\mu)}\right]_{\frac{\mu}{\gcd(N,\mu)}}' [h]_{\frac{\mu}{\gcd(N,\mu)}}'\left(\frac{hj \pm r}{\gcd(N,\mu)}\right)^2}{\frac{\mu}{\gcd(N,\mu)}}\right) 
		\end{align*} 
	\end{small}
	\hspace{-0.25cm} is well-defined for $\left[a\right]'_{\frac{\mu}{\gcd(N,\mu)}}$ a solution of $a\left[a\right]'_{\frac{\mu}{\gcd(N,\mu)}} \equiv -1 ~(\!\!\!\mod{\frac{\mu}{\gcd(N,\mu)}})$, since it is invariant under any shifts by $\frac{\mu}{\gcd(N,\mu)}$, because $\frac{hj\pm r}{\gcd(N,\mu)}\in\Z$ by assumption.
	
	For simplicity we stick to the notation $[h]_{k}'=h'$. For $K_{k,j,N,1}(n,r,\kappa)$ we obtain that
	\begin{scriptsize}
		\begin{align*}
			\chi_{j,r} &(N,M_{h,k}) \\
			=&  -i \varepsilon_{\frac{\mu}{\gcd(N,\mu)}} \left(\frac{k}{-h}\right) \left(\frac{\frac{Nh2^\nu}{\gcd(N,\mu)}}{\frac{\mu}{\gcd(N,\mu)}}\right) \sqrt{\frac{\gcd(N,\mu)2^\nu}{2N}} \exp\left(-2\pi i \frac{-[4]_{\frac{\mu}{\gcd(N,\mu)}}' \left[\frac{N2^\nu}{\gcd(N,\mu)}\right]_{\frac{\mu}{\gcd(N,\mu)}}' [h]_{\frac{\mu}{\gcd(N,\mu)}}'\left(\frac{hj + r}{\gcd(N,\mu)}\right)^2}{\frac{\mu}{\gcd(N,\mu)}}\right)  \\
			& \times   \exp\left(2 \pi i\left(\frac{1}{24}\left(h'k\left(1-h^2\right)-h\left(-\frac{hh'+1}{k}-k+3\right)-3\right)+\frac{3}{8}  + \frac{hj^2-h'r^2+2rj}{4Nk}\right)\right) \delta_{\gcd(N,\mu)\mid (hj+r)} \notag \\
			& +i \varepsilon_{\frac{\mu}{\gcd(N,\mu)}} \left(\frac{k}{-h}\right) \left(\frac{\frac{Nh2^\nu}{\gcd(N,\mu)}}{\frac{\mu}{\gcd(N,\mu)}}\right) \sqrt{\frac{\gcd(N,\mu)2^\nu}{2N}} \exp\left(-2\pi i \frac{-[4]_{\frac{\mu}{\gcd(N,\mu)}}' \left[\frac{N2^\nu}{\gcd(N,\mu)}\right]_{\frac{\mu}{\gcd(N,\mu)}}' [h]_{\frac{\mu}{\gcd(N,\mu)}}'\left(\frac{hj - r}{\gcd(N,\mu)}\right)^2}{\frac{\mu}{\gcd(N,\mu)}}\right) \\
			& \times   \exp\left(2 \pi i\left(\frac{1}{24}\left(h'k\left(1-h^2\right)-h\left(-\frac{hh'+1}{k}-k+3\right)-3\right)+\frac{3}{8}+ \frac{hj^2-h'r^2-2rj}{4Nk}\right)\right) \delta_{\gcd(N,\mu)\mid (hj-r)}.
		\end{align*}
	\end{scriptsize}
	\hspace{-0.25cm} Using quadratic reciprocity together with $\left(\frac{k}{-h}\right) = \sgn(k) \left(\frac{k}{h}\right) = \left(\frac{k}{h}\right)$ we have
	\begin{align*}
		\left(\frac{k}{-h}\right) \left(\frac{\frac{Nh2^\nu}{\gcd(N,\mu)}}{\frac{\mu}{\gcd(N,\mu)}}\right) =&  \left(\frac{2}{h}\right)^\nu  \left(\frac{\mu}{h}\right) \left(\frac{h}{\frac{\mu}{\gcd(N,\mu)}}\right) \left(\frac{\frac{N2^\nu}{\gcd(N,\mu)}}{\frac{\mu}{\gcd(N,\mu)}}\right) \\
		=& \left((-1)^{\frac{h^2-1}{8}}\right)^\nu  (-1)^{\frac{(\mu-1)(h-1)}{4}}  \left(\frac{h}{\gcd(N,\mu)}\right) \left(\frac{\frac{N2^\nu}{\gcd(N,\mu)}}{\frac{\mu}{\gcd(N,\mu)}}\right).
	\end{align*}
	Therefore 
	\begin{scriptsize}
		\begin{align*}
			& \hspace{-0.6cm}K_{k,j,N,1}(n,r,\kappa)\\
			=& i \varepsilon_{\frac{\mu}{\gcd(N,\mu)}} \left(\frac{\frac{N2^\nu}{\gcd(N,\mu)}}{\frac{\mu}{\gcd(N,\mu)}}\right) \sqrt{\frac{\gcd(N,\mu)2^\nu}{2N}} \exp\left(\frac{2 \pi i}{4}\right) \delta_{\substack{\nu-\alpha=1 \\ j\not\equiv r \pmod{2}}} \sum_{\substack{h\pmod{k} \\ \gcd(h,k)=1  }}  \left((-1)^{\frac{h^2-1}{8}}\right)^\nu  (-1)^{\frac{(\mu-1)(h-1)}{4}}  \left(\frac{h}{\gcd(N,\mu)}\right) \\
			& \times  \exp\left(\frac{2\pi i}{24k}\left(\left(-24n+2+k^2-3k\right)h - \left(-24N\kappa^2-24\kappa r+1-k^2\right)h'\right) \right) \\
			& \times \left( -  \delta_{\gcd(N,\mu)\mid (hj+r)} \exp\left(2\pi i \frac{[4]_{\frac{\mu}{\gcd(N,\mu)}}' \left[\frac{N2^\nu}{\gcd(N,\mu)}\right]_{\frac{\mu}{\gcd(N,\mu)}}' [h]_{\frac{\mu}{\gcd(N,\mu)}}'\left(\frac{hj + r}{\gcd(N,\mu)}\right)^2}{\frac{\mu}{\gcd(N,\mu)}}\right) \right. \notag \\
			& \left. \times   \exp\left(\frac{2 \pi i}{24k}\left(-h^2h'k^2+h^2h'+\frac{12rj}{N} \right)\right)  \right.  \notag \\
			& \left. + \delta_{\gcd(N,\mu)\mid (hj-r)} \exp\left(2\pi i \frac{[4]_{\frac{\mu}{\gcd(N,\mu)}}' \left[\frac{N2^\nu}{\gcd(N,\mu)}\right]_{\frac{\mu}{\gcd(N,\mu)}}' [h]_{\frac{\mu}{\gcd(N,\mu)}}'\left(\frac{hj - r}{\gcd(N,\mu)}\right)^2}{\frac{\mu}{\gcd(N,\mu)}}\right) \right. \notag \\
			& \left. \times \exp\left(\frac{2 \pi i}{24k}\left(-h^2h'k^2+h^2h' -\frac{12rj}{N} \right)\right)  \rule{0pt}{0.75cm}\right).
		\end{align*}
	\end{scriptsize}
	
	We already saw that the following is well-defined and now observe that
	\begin{align*}
		&\exp\left(2\pi i \frac{[4]_{\frac{\mu}{\gcd(N,\mu)}}' \left[\frac{N2^\nu}{\gcd(N,\mu)}\right]_{\frac{\mu}{\gcd(N,\mu)}}' [h]_{\frac{\mu}{\gcd(N,\mu)}}'\left(\frac{hj\pm r}{\gcd(N,\mu)}\right)^2}{\frac{\mu}{\gcd(N,\mu)}}\right) \\
		=& \exp\left(\frac{2\pi i}{k\gcd(N,\mu)} \left(2^\nu [4]_{\frac{\mu}{\gcd(N,\mu)}}' \left[\frac{N2^\nu}{\gcd(N,\mu)}\right]_{\frac{\mu}{\gcd(N,\mu)}}' [h]_{\frac{\mu}{\gcd(N,\mu)}}'\left(h^2j^2\pm 2hjr +r^2\right)\right) \right).
	\end{align*}
	
	Choose $[h]_{\frac{\mu}{\gcd(N,\mu)}}'=h'$ here\footnote{Note that $hh'\equiv -1 \pmod{k}$ implies that $hh'\equiv -1 ~(\!\!\!\mod{\frac{\mu}{\gcd(N,\mu)}})$, since $\frac{\mu}{\gcd(N,\mu)}\mid k$. Thus $h'$ is a possible choice for $\left[h\right]_{\frac{\mu}{\gcd(N,\mu)}}'$.}.
	Analogously to above we can choose $h'$ such that the equivalence $hh'\equiv -1 \pmod{xk}$ holds for some $x\in\N$ such that $\gcd(x,h)=1$.
	Taking $x=\gcd(N,\mu)$ we obtain
	\begin{scriptsize}
		\begin{align*}
			& \hspace{-0.6cm} K_{k,j,N,1}(n,r,\kappa) \\
			=& i \varepsilon_{\frac{\mu}{\gcd(N,\mu)}} \left(\frac{\frac{N2^\nu}{\gcd(N,\mu)}}{\frac{\mu}{\gcd(N,\mu)}}\right) \sqrt{\frac{\gcd(N,\mu)2^\nu}{2N}} \exp\left(\frac{2 \pi i}{4}\right) \delta_{\substack{\nu-\alpha=1 \\ j\not\equiv r \pmod{2}}} \sum_{\substack{h\pmod{k} \\ \gcd(h,k)=1 }} \left((-1)^{\frac{h^2-1}{8}}\right)^\nu  (-1)^{\frac{(\mu-1)(h-1)}{4}}  \left(\frac{h}{\gcd(N,\mu)}\right) \\
			& \times  \exp\left(\frac{2\pi i}{24k\gcd(N,\mu)}\left(\left(\left(-24n+2+k^2-3k\right)\gcd(N,\mu)- 24j^2 2^\nu [4]_{\frac{\mu}{\gcd(N,\mu)}}' \left[\frac{N2^\nu}{\gcd(N,\mu)}\right]_{\frac{\mu}{\gcd(N,\mu)}}'\right)h \right.\right. \\
			&\left. \left. \hspace{1.5cm} - \left(\left(-24N\kappa^2-24\kappa r+1-k^2\right)\gcd(N,\mu)-24r^2 2^\nu [4]_{\frac{\mu}{\gcd(N,\mu)}}' \left[\frac{N2^\nu}{\gcd(N,\mu)}\right]_{\frac{\mu}{\gcd(N,\mu)}}'\right)[h]_{k\gcd(N,\mu)}'\right) \right) \\
			& \times \left( - \delta_{\gcd(N,\mu)\mid (hj+r)}   \exp\left(\frac{2\pi i}{k\gcd(N,\mu)} \left(-2^{\nu+1}jr [4]_{\frac{\mu}{\gcd(N,\mu)}}' \left[\frac{N2^\nu}{\gcd(N,\mu)}\right]_{\frac{\mu}{\gcd(N,\mu)}}' \right) \right) \right. \\
			&\left. \times   \exp\left(\frac{2 \pi i}{24k}\left(h^2[h]_{k\gcd(N,\mu)}'(1-k^2)+\frac{12rj}{N} \right)\right)  \right.  \notag \\
			& \left. + \delta_{\gcd(N,\mu)\mid (hj-r)} \exp\left(\frac{2\pi i}{k\gcd(N,\mu)} \left(2^{\nu+1}jr [4]_{\frac{\mu}{\gcd(N,\mu)}}' \left[\frac{N2^\nu}{\gcd(N,\mu)}\right]_{\frac{\mu}{\gcd(N,\mu)}}' \right) \right)  \right. \\
			& \left. \times \exp\left(\frac{2 \pi i}{24k}\left(h^2[h]_{k\gcd(N,\mu)}'(1-k^2) -\frac{12rj}{N} \right)\right)  \right).
		\end{align*}
	\end{scriptsize}
	
	We now need to split into two cases, namely $3\nmid k$ and $3\mid k$. In the first case we obtain that $3\mid (k^2-1)$. Choosing $[h]_{k\gcd(N,\mu)}'$ such that $h[h]_{k\gcd(N,\mu)}' \equiv -1 \pmod{8k\gcd(N,\mu)}$, analogously to above, yields\footnote{We are allowed to do this since $\gcd(8,h)=1$, this is because we know that $h$ is odd. Additionally we use that $h[h]_{8k\gcd(N,\mu)}'\equiv -1 \pmod{8k}$ since $(8k) \mid (8k\gcd(N,\mu))$.}
	\begin{footnotesize}
		\begin{align*}
			K_{k,j,N,1}&(n,r,\kappa)\\
			=&  i \varepsilon_{\frac{\mu}{\gcd(N,\mu)}} \left(\frac{\frac{N2^\nu}{\gcd(N,\mu)}}{\frac{\mu}{\gcd(N,\mu)}}\right) \sqrt{\frac{\gcd(N,\mu)2^\nu}{2N}} \exp\left(\frac{2 \pi i}{4}\right) \delta_{\substack{\nu-\alpha=1 \\ j\not\equiv r \pmod{2}}} \notag \\
			&\times \sum_{\substack{h\pmod{k} \\ \gcd(h,k)=1 }} \left((-1)^{\frac{h^2-1}{8}}\right)^\nu  (-1)^{\frac{(\mu-1)(h-1)}{4}}  \left(\frac{h}{\gcd(N,\mu)}\right) \notag  \\
			& \times \left( -  \delta_{\gcd(N,\mu)\mid (hj+r)}  \exp\left(\frac{2\pi i}{24k\gcd(N,\mu)} \left(-48\cdot2^{\nu}jr [4]_{\frac{\mu}{\gcd(N,\mu)}}' \left[\frac{N2^\nu}{\gcd(N,\mu)}\right]_{\frac{\mu}{\gcd(N,\mu)}}' +\frac{12rj}{N}\gcd(N,\mu) \right)\right)  \right.  \notag \\
			& \left. + \delta_{\gcd(N,\mu)\mid (hj-r)} \exp\left(\frac{2\pi i}{24k\gcd(N,\mu)} \left(48\cdot 2^{\nu}jr [4]_{\frac{\mu}{\gcd(N,\mu)}}' \left[\frac{N2^\nu}{\gcd(N,\mu)}\right]_{\frac{\mu}{\gcd(N,\mu)}}'  -\frac{12rj}{N}\gcd(N,\mu) \right)\right)  \right) \notag \\
			& \times  \exp\left(\frac{2\pi i}{24k\gcd(N,\mu)}\left(\left(\left(-24n+1+2k^2-3k\right)\gcd(N,\mu)- 24j^2 2^\nu [4]_{\frac{\mu}{\gcd(N,\mu)}}' \left[\frac{N2^\nu}{\gcd(N,\mu)}\right]_{\frac{\mu}{\gcd(N,\mu)}}'\right)h \right.\right. \notag  \\
			&\left. \left. \hspace{1cm} - \left(\left(-24N\kappa^2-24\kappa r+1-k^2\right)\gcd(N,\mu)-24r^2 2^\nu [4]_{\frac{\mu}{\gcd(N,\mu)}}' \left[\frac{N2^\nu}{\gcd(N,\mu)}\right]_{\frac{\mu}{\gcd(N,\mu)}}'\right)[h]_{8k\gcd(N,\mu)}'\right) \right)\\
			\eqqcolon& K_{k,j,N,1,+}(n,r,\kappa) + K_{k,j,N,1,-}(n,r,\kappa) 
		\end{align*}
	\end{footnotesize}
	\hspace{-0.25cm} where
	\begin{footnotesize}
		\begin{align*}
			&K_{k,j,N,1,\pm}(n,r,\kappa)\\
			=&\epsilon_{e,\pm}^*(k,j,N,r)   \frac{1}{8\gcd(N,\mu)^2}\sum_{s=0}^{\gcd(N,\mu)-1} \exp\left(\pm 2\pi i \frac{rs}{\gcd(N,\mu)}\right) \sum_{\substack{h\pmod{8\gcd(N,\mu)k} \\ \gcd(h,8\gcd(N,\mu)k)=1 }}  (-1)^{\frac{h^2-1}{8}\nu+\frac{(\mu-1)(h-1)}{4}}  \left(\frac{h}{\gcd(N,\mu)}\right)  \notag \\
			& \times \exp\left(\frac{2\pi i}{8k\gcd(N,\mu)}\left(\left(\mu_3+8jsk\right)h - \nu_3 [h]_{8k\gcd(N,\mu)}'\right) \right),
		\end{align*}
	\end{footnotesize}
	\hspace{-0.25cm} with 
	\begin{small}
		\begin{align*}
		\epsilon_{e,\pm}^*(k,j,N,r) \coloneqq&  \mp i \varepsilon_{\frac{\mu}{\gcd(N,\mu)}} \left(\frac{\frac{N2^\nu}{\gcd(N,\mu)}}{\frac{\mu}{\gcd(N,\mu)}}\right) \sqrt{\frac{\gcd(N,\mu)2^\nu}{2N}} \exp\left(\frac{2 \pi i}{4}\right) \delta_{\substack{\nu-\alpha=1 \\ j\not\equiv r \pmod{2}}} \\
		& \times \exp\left(\frac{2\pi i}{24k\gcd(N,\mu)} \left(\mp 48\cdot 2^{\nu}jr [4]_{\frac{\mu}{\gcd(N,\mu)}}' \left[\frac{N2^\nu}{\gcd(N,\mu)}\right]_{\frac{\mu}{\gcd(N,\mu)}}'  \pm \frac{12rj}{N}\gcd(N,\mu) \right)\right), \\
		\mu_3 \coloneqq& \left(-8n+\frac{1+2k^2}{3}-k\right)\gcd(N,\mu)- 8j^2 2^\nu [4]_{\frac{\mu}{\gcd(N,\mu)}}' \left[\frac{N2^\nu}{\gcd(N,\mu)}\right]_{\frac{\mu}{\gcd(N,\mu)}}', \\
		\nu_3 \coloneqq& \left(-8N\kappa^2-8\kappa r+\frac{1-k^2}{3}\right)\gcd(N,\mu)-8r^2 2^\nu [4]_{\frac{\mu}{\gcd(N,\mu)}}' \left[\frac{N2^\nu}{\gcd(N,\mu)}\right]_{\frac{\mu}{\gcd(N,\mu)}}'.
	\end{align*}
	\end{small}
	\hspace{-0.25cm} Note that $\mu_3,\nu_3 \in \Z$, since $k^2-1 \equiv 0 \pmod{3}$ is equivalent to $2k^2 +1 \equiv 0 \pmod{3}$.

	Lastly we use a small trick to rewrite our Kloosterman sum into the shape that we want. First we note that $16 \mid (8k\gcd(N,\mu))$ and that $ (-1)^{\frac{h^2-1}{8}\nu+\frac{(\mu-1)(h-1)}{4}} $ only depends on $h$ modulo $16$. Thus we obtain
	\begin{footnotesize}
		\begin{align}\label{equation: Kloosterman 3nmidk}
			&K_{k,j,N,1,\pm}(n,r,\kappa) \\
		=& \epsilon_{e,\pm}^*(k,j,N,r)   \frac{1}{8\gcd(N,\mu)^2} \sum_{s=0}^{\gcd(N,\mu)-1} \exp\left(\pm 2\pi i \frac{rs}{\gcd(N,\mu)}\right) \frac{1}{16} \sum_{j\pmod{16}}  (-1)^{\frac{j^2-1}{8}\nu+\frac{(\mu-1)(j-1)}{4}} \sum_{\ell \pmod{16}} e^{\frac{-2\pi i j \ell}{16}} \notag \\
			& \times  \sum_{\substack{h\pmod{8\gcd(N,\mu)k}  \\ \gcd(h,8\gcd(N,\mu)k)=1 }}    \left(\frac{h}{\gcd(N,\mu)}\right)  \exp\left(\frac{2\pi i}{8k\gcd(N,\mu)}\left(\left(\mu_3+8jsk+\frac{8\ell\gcd(N,\mu)k}{16}\right)h - \nu_3 [h]_{8k\gcd(N,\mu)}'\right) \right), \notag 
		\end{align}
	\end{footnotesize}
	\hspace{-0.25cm} using the orthogonality of roots of unity 
	$$ \frac{1}{16} \sum_{\ell \pmod{16}} e^{\frac{2\pi i a \ell}{16}} = \begin{cases}
		1 & \text{ if } 16\mid a,\\
		0 & \text{ otherwise.}
	\end{cases}$$

	In the second case, $3\mid k$, we have that $3\nmid h$ and thus $\gcd(24,h)=1$.
	Choosing $[h]_{k\gcd(N,\mu)}'$ such that $h[h]_{k\gcd(N,\mu)}' \equiv -1 \pmod{24k\gcd(N,\mu)}$, analogously to above, yields\footnote{Using that $h[h]_{24k\gcd(N,\mu)}'\equiv -1 \pmod{24k}$ since $(24k) \mid (24k\gcd(N,\mu))$.}
	\begin{align*}
			K_{k,j,N,1}(n,r,\kappa) \eqqcolon& K_{k,j,N,1,+}^*(n,r,\kappa) + K_{k,j,N,1,-}^*(n,r,\kappa),
		\end{align*}
	where analogously to the first case
	\begin{scriptsize}
		\begin{align}\label{equation: Kloosterman 3midk}
			&K_{k,j,N,1,\pm}^*(n,r,\kappa)  \\
			=& \epsilon_{e,\pm}^*(k,j,N,r)   \frac{1}{24\gcd(N,\mu)^2} \sum_{s=0}^{\gcd(N,\mu)-1} \exp\left(\pm 2\pi i \frac{rs}{\gcd(N,\mu)}\right) \frac{1}{16} \sum_{j\pmod{16}}  (-1)^{\frac{j^2-1}{8}\nu+\frac{(\mu-1)(j-1)}{4}} \sum_{\ell \pmod{16}} e^{\frac{-2\pi i j \ell}{16}} \notag \\
			& \times  \sum_{\substack{h\pmod{24\gcd(N,\mu)k}  \\ \gcd(h,24\gcd(N,\mu)k)=1 }}    \left(\frac{h}{\gcd(N,\mu)}\right)  \exp\left(\frac{2\pi i}{24k\gcd(N,\mu)}\left(\left(\mu_4+24jsk+\frac{24\ell\gcd(N,\mu)k}{16}\right)h - \nu_4 [h]_{24k\gcd(N,\mu)}'\right) \right), \notag 
		\end{align}
	\end{scriptsize}
	\hspace{-0.25cm} with 
	\begin{align*}
		\mu_4 \coloneqq& \left(-24n+1+2k^2-3k\right)\gcd(N,\mu)- 24j^2 2^\nu [4]_{\frac{\mu}{\gcd(N,\mu)}}' \left[\frac{N2^\nu}{\gcd(N,\mu)}\right]_{\frac{\mu}{\gcd(N,\mu)}}', \\
		\nu_4 \coloneqq& \left(-24N\kappa^2-24\kappa r+1-k^2\right)\gcd(N,\mu)-24r^2 2^\nu [4]_{\frac{\mu}{\gcd(N,\mu)}}' \left[\frac{N2^\nu}{\gcd(N,\mu)}\right]_{\frac{\mu}{\gcd(N,\mu)}}'. 
	\end{align*}

	We now note that we can bound \eqref{equation: Kloosterman 3nmidk}, respectively \eqref{equation: Kloosterman 3midk}, by 
	\begin{footnotesize}
		\begin{align*}
			&\left|K_{k,j,N,1,\pm}(n,r,\kappa)\right| \\
		\leq& \left|\epsilon_{e,\pm}^*(k,j,N,r) \frac{1}{8\gcd(N,\mu)^2} \frac{1}{16}\right| \sum_{s=0}^{\gcd(N,\mu)-1} \sum_{j\pmod{16}}   \sum_{\ell \pmod{16}} \notag \\
			&  \times \left|  \sum_{\substack{h\pmod{8\gcd(N,\mu)k}  \\ \gcd(h,8\gcd(N,\mu)k)=1 }}    \left(\frac{h}{\gcd(N,\mu)}\right)  \exp\left(\frac{2\pi i}{8k\gcd(N,\mu)}\left(\left(\mu_3+8jsk+\frac{8\ell\gcd(N,\mu)k}{16}\right)h - \nu_3 [h]_{8k\gcd(N,\mu)}'\right) \right)\right|,
		\end{align*}
	\end{footnotesize}
	\hspace{-0.25cm} respectively
	\begin{scriptsize}
		\begin{align*}
			&\left|K_{k,j,N,1,\pm}^*(n,r,\kappa) \right| \\
			\leq&  \left|\epsilon_{e,\pm}^*(k,j,N,r)   \frac{1}{24\gcd(N,\mu)^2} \frac{1}{16}\right| \sum_{s=0}^{\gcd(N,\mu)-1} \sum_{j\pmod{16}}   \sum_{\ell \pmod{16}}  \notag \\
			&  \times  \left|\sum_{\substack{h\pmod{24\gcd(N,\mu)k}  \\ \gcd(h,24\gcd(N,\mu)k)=1 }}    \left(\frac{h}{\gcd(N,\mu)}\right)  \exp\left(\frac{2\pi i}{24k\gcd(N,\mu)}\left(\left(\mu_4+24jsk+\frac{24\ell\gcd(N,\mu)k}{16}\right)h - \nu_4 [h]_{24k\gcd(N,\mu)}'\right) \right) \right|.
		\end{align*}
	\end{scriptsize}
	\hspace{-0.25cm} Both sums over $h$ are of the required shape, so we can bound them using Malishev's result (see  Lemma \ref{Lemma: Malishev}) and obtain that they are 
	\begin{small}
		\begin{align*}
			O\left((8\gcd(N,\mu)k)^{\frac{1}{2}+\varepsilon} \min\left(\gcd\left(\mu_3+8jsk+\frac{8\ell\gcd(N,\mu)k}{16},8\gcd(N,\mu)k\right)^\frac{1}{2}, \gcd\left(\nu_3,8\gcd(N,\mu)k\right)^\frac{1}{2}\right)\right),
		\end{align*}
	\end{small}
	\hspace{-0.25cm} respectively
	\begin{footnotesize}
		\begin{align*}
			O\left((24\gcd(N,\mu)k)^{\frac{1}{2}+\varepsilon} \min\left(\gcd\left(\mu_4+24jsk+\frac{24\ell\gcd(N,\mu)k}{16},24\gcd(N,\mu)k\right)^\frac{1}{2}, \gcd\left(\nu_4,24\gcd(N,\mu)k\right)^\frac{1}{2}\right)\right),
		\end{align*}
	\end{footnotesize}
	\hspace{-0.25cm} for $\varepsilon>0$.

	We see that $8\gcd(N,\mu)\leq24\gcd(N,\mu)\leq24N=O_N(1)$ and 
		\begin{align*}
			&\min\left(\gcd\left(\mu_3+8jsk+\frac{8\ell\gcd(N,\mu)k}{16},8\gcd(N,\mu)k\right)^\frac{1}{2}, \gcd\left(\nu_3,8\gcd(N,\mu)k\right)^\frac{1}{2}\right) = O_N\left(n^\frac{1}{2}\right),
		\end{align*}
		and 
		\begin{align*}
			\min\left(\gcd\left(\mu_4+24jsk+\frac{24\ell\gcd(N,\mu)k}{16},24\gcd(N,\mu)k\right)^\frac{1}{2}, \gcd\left(\nu_4,24\gcd(N,\mu)k\right)^\frac{1}{2}\right) =  O_N\left(n^\frac{1}{2}\right).
		\end{align*}

	This yields
	\begin{small}
		\begin{align*}
			K_{k,j,N,1,\pm}(n,r,\kappa) 
			=& 	O_N\left(\left|\epsilon_{e,\pm}^*(k,j,N,r)   \frac{1}{8\gcd(N,\mu)^2} \frac{1}{16}\right| \sum\limits_{s=0}^{\gcd(N,\mu)-1} \sum\limits_{j\pmod{16}}   \sum\limits_{\ell \pmod{16}} k^{\frac{1}{2}+\varepsilon}n^\frac{1}{2}\right) \\
			=& 	O_N\left(\left|\epsilon_{e,\pm}^*(k,j,N,r)   \frac{1}{8\gcd(N,\mu)^2} \frac{1}{16}\right| 16^2 \gcd(N,\mu) k^{\frac{1}{2}+\varepsilon}n^\frac{1}{2}\right)  \\
			=& O_N\left(n^\frac{1}{2}k^{\frac{1}{2}+\varepsilon} \right),
		\end{align*}
	\end{small}
	\hspace{-0.25cm} and analogously $K_{k,j,N,1,\pm}^*(n,r,\kappa) =O_N(n^\frac{1}{2}k^{\frac{1}{2}+\varepsilon}),$ since $\epsilon_{e,\pm}^*(k,j,N,r)=O_N(1)$. We thus showed that
	\begin{align*}
		K_{k,j,N,1}(n,r,\kappa) =& O_N\left(n^\frac{1}{2}k^{\frac{1}{2}+\varepsilon} \right).
	\end{align*}

	The only thing left to do now is to look at $K_{k,j,N,2}(n,r,\kappa) $, where
	$$ G(hN ,hj\pm r,2^\nu \mu) = 2^{\frac{\nu+\alpha}{2}} A_{\pm} (i+1) \left(\frac{-2^{\nu+\alpha}}{\frac{hN \mu}{2^\alpha}}\right) \varepsilon_{\frac{hN \mu}{2^\alpha}}  \exp\left(-2\pi i \frac{\left[\frac{hN \mu}{2^\alpha}\right]_{2^{\nu+\alpha+2}}' \frac{(hj\pm r)^2}{4}}{2^{\nu+\alpha}}\right).$$

	Analogously to the calculation of $K_{k,j,N,1}$ we obtain that
	\begin{small}
		\begin{align*}
			\chi_{j,r} &(N,M_{h,k}) \notag \\ 
			=&  \delta_{\left(\gcd(N,\mu)2^{\alpha+1}\right)\mid (hj+ r)} (1-i) \varepsilon_{\frac{hN \mu}{2^\alpha}} \varepsilon_{\frac{\mu}{\gcd(N,\mu)}} \left(\frac{k}{-h}\right) \left(\frac{-2^{\nu+\alpha}}{\frac{hN \mu}{2^\alpha}}\right) \left(\frac{\frac{Nh2^\nu}{\gcd(N,\mu)}}{\frac{\mu}{\gcd(N,\mu)}}\right)  \sqrt{\frac{2^{\alpha-1}\gcd(N,\mu)}{N}}   \\
			&\times \exp\left(2 \pi i\left(\frac{1}{24}\left(h'k\left(1-(-h)^2\right)+(-h)\left(-\frac{hh'+1}{k}-k+3\right)-3\right)+\frac{3}{8}\right)\right) \notag \\
			& \times \exp\left( \frac{2\pi i }{4Nk}\left(hj^2-h'r^2+2rj\right)\right)     \exp\left(-2\pi i \frac{\left[\frac{hN \mu}{2^\alpha}\right]_{2^{\nu+\alpha+2}}' \frac{(hj+ r)^2}{4}}{2^{\nu+\alpha}}\right) \notag \\
			& \times   \exp\left(-2\pi i \frac{-[4]_{\frac{\mu}{\gcd(N,\mu)}}' \left[\frac{N2^\nu}{\gcd(N,\mu)}\right]_{\frac{\mu}{\gcd(N,\mu)}}' [h]_{\frac{\mu}{\gcd(N,\mu)}}'\left(\frac{hj + r}{\gcd(N,\mu)}\right)^2}{\frac{\mu}{\gcd(N,\mu)}}\right)  \notag \\
			& + \delta_{(\gcd(N,\mu)2^{\alpha+1})\mid (hj- r)} (i-1)  \varepsilon_{\frac{hN \mu}{2^\alpha}} \varepsilon_{\frac{\mu}{\gcd(N,\mu)}} \left(\frac{k}{-h}\right) \left(\frac{-2^{\nu+\alpha}}{\frac{hN \mu}{2^\alpha}}\right) \left(\frac{\frac{Nh2^\nu}{\gcd(N,\mu)}}{\frac{\mu}{\gcd(N,\mu)}}\right) \sqrt{\frac{2^{\alpha-1}\gcd(N,\mu)}{N}}  \\
			& \times \exp\left(2 \pi i \left(\frac{1}{24}\left(h'k\left(1-(-h)^2\right)+(-h)\left(-\frac{hh'+1}{k}-k+3\right)-3\right)+\frac{3}{8}\right)\right)  \notag \\
			& \times \exp\left( \frac{2\pi i }{4Nk}\left(hj^2-h'r^2-2rj\right)\right)     \exp\left(-2\pi i \frac{\left[\frac{hN \mu}{2^\alpha}\right]_{2^{\nu+\alpha+2}}' \frac{(hj- r)^2}{4}}{2^{\nu+\alpha}}\right) \notag \\
			&\times  \exp\left(-2\pi i \frac{-[4]_{\frac{\mu}{\gcd(N,\mu)}}' \left[\frac{N2^\nu}{\gcd(N,\mu)}\right]_{\frac{\mu}{\gcd(N,\mu)}}' [h]_{\frac{\mu}{\gcd(N,\mu)}}'\left(\frac{hj - r}{\gcd(N,\mu)}\right)^2}{\frac{\mu}{\gcd(N,\mu)}}\right)  .
		\end{align*}
	\end{small}
	\hspace{-0.25cm} Using quadratic reciprocity we have
	\begin{align*}
		&\left(\frac{k}{-h}\right) \left(\frac{-2^{\nu+\alpha}}{\frac{hN \mu}{2^\alpha}}\right) \left(\frac{\frac{Nh2^\nu}{\gcd(N,\mu)}}{\frac{\mu}{\gcd(N,\mu)}}\right)\\
		=& \left(\frac{2}{h}\right)^\nu  \left(\frac{\mu}{h}\right) \left(\frac{-1}{h}\right) \left(\frac{2}{h}\right)^{\nu+\alpha} \left(\frac{-2^{\nu+\alpha}}{\frac{N \mu}{2^\alpha}}\right) \left(\frac{h}{\frac{\mu}{\gcd(N,\mu)}}\right) \left(\frac{\frac{N2^\nu}{\gcd(N,\mu)}}{\frac{\mu}{\gcd(N,\mu)}}\right) \\
		=& \left((-1)^{\frac{h^2-1}{8}}\right)^\nu  (-1)^{\frac{(\mu-1)(h-1)}{4}} (-1)^{\frac{h-1}{2}+\frac{h^2-1}{8}(\nu+\alpha)}  
		\left(\frac{h}{\gcd(N,\mu)}\right) \left(\frac{-2^{\nu+\alpha}}{\frac{N \mu}{2^\alpha}}\right) \left(\frac{\frac{N2^\nu}{\gcd(N,\mu)}}{\frac{\mu}{\gcd(N,\mu)}}\right) .
	\end{align*}
	Therefore our Kloosterman sum equals 
	\begin{small}
		\begin{align*}
			& K_{k,j,N,2}(n,r,\kappa) \\
			=& (1-i)  \varepsilon_{\frac{\mu}{\gcd(N,\mu)}}  \sqrt{\frac{2^{\alpha-1}\gcd(N,\mu)}{N}} \left(\frac{-2^{\nu+\alpha}}{\frac{N \mu}{2^\alpha}}\right) \left(\frac{\frac{N2^\nu}{\gcd(N,\mu)}}{\frac{\mu}{\gcd(N,\mu)}}\right) \exp\left(\frac{2\pi i}{4}\right) \delta_{\nu-\alpha>1} \notag \\
			& \times  \sum_{\substack{h\pmod{k} \\ \gcd(h,k)=1 }} (-1)^{\frac{h^2-1}{8}\nu+\frac{(\mu-1)(h-1)}{4}+\frac{h-1}{2}+\frac{h^2-1}{8}(\nu+\alpha)} \varepsilon_{\frac{hN \mu}{2^\alpha}} \left(\frac{h}{\gcd(N,\mu)}\right)   \notag \\
			& \times   \exp\left(\frac{2\pi i}{24k}\left(\left(-24n+2+k^2-3k\right)h - \left(-24N\kappa^2 -24\kappa r +1-k^2\right)h'\right) \right)  \\
			&\times \left(\rule{0pt}{0.7cm}  \delta_{(\gcd(N,\mu)2^{\alpha+1})\mid (hj+ r)} \exp\left(\frac{2 \pi i}{24k} \left( -h^2h'k^2 +h^2h'+\frac{12rj}{N}\right)\right) \exp\left(-2\pi i \frac{\left[\frac{hN \mu}{2^\alpha}\right]_{2^{\nu+\alpha+2}}' \frac{(hj+ r)^2}{4}}{2^{\nu+\alpha}}\right)  \right. \notag \\
			& \left. \times  \exp\left(2\pi i \frac{[4]_{\frac{\mu}{\gcd(N,\mu)}}' \left[\frac{N2^\nu}{\gcd(N,\mu)}\right]_{\frac{\mu}{\gcd(N,\mu)}}' [h]_{\frac{\mu}{\gcd(N,\mu)}}'\left(\frac{hj + r}{\gcd(N,\mu)}\right)^2}{\frac{\mu}{\gcd(N,\mu)}}\right)\right.  \notag \\
			& \left.-  \delta_{(\gcd(N,\mu)2^{\alpha+1})\mid (hj- r)}  \exp\left(\frac{2 \pi i}{24k} \left( -h^2h'k^2 +h^2h'-\frac{12rj}{N}\right)\right) \exp\left(-2\pi i \frac{\left[\frac{hN \mu}{2^\alpha}\right]_{2^{\nu+\alpha+2}}' \frac{(hj- r)^2}{4}}{2^{\nu+\alpha}}\right) \right.  \notag \\
			& \left. \times  \exp\left(2\pi i \frac{[4]_{\frac{\mu}{\gcd(N,\mu)}}' \left[\frac{N2^\nu}{\gcd(N,\mu)}\right]_{\frac{\mu}{\gcd(N,\mu)}}' [h]_{\frac{\mu}{\gcd(N,\mu)}}'\left(\frac{hj - r}{\gcd(N,\mu)}\right)^2}{\frac{\mu}{\gcd(N,\mu)}}\right)\rule{0pt}{0.7cm}\right).
		\end{align*}
	\end{small}
	
	We observe that
	\begin{align*}
		\exp\left(-2\pi i \frac{\left[\frac{hN \mu}{2^\alpha}\right]_{2^{\nu+\alpha+2}}' \frac{(hj\pm r)^2}{4}}{2^{\nu+\alpha}}\right) 
		=& \exp\left(\frac{2\pi i}{2^{\nu+\alpha+2}} \left(\left[\frac{N \mu}{2^\alpha}\right]_{2^{\nu+\alpha+2}}' \left(-hj^2\mp 2jr+ \left[h\right]_{2^{\nu+\alpha+2}}'r^2\right)\right)\right).
	\end{align*}
	
	Choose $[h]_{\frac{\mu}{\gcd(N,\mu)}}'=h'$ from now on\footnote{Note that $hh'\equiv -1 \pmod{k}$ implies that $hh'\equiv -1 ~(\!\!\!\mod{\frac{\mu}{\gcd(N,\mu)}})$, since $\frac{\mu}{\gcd(N,\mu)}\mid k$. Thus $h'$ is a possible choice for $\left[h\right]_{\frac{\mu}{\gcd(N,\mu)}}'$.}. Analogously to the odd $k$ case or the calculations of $K_{k,j,N,1}$ we are able to choose $h'$ such that $hh'\equiv -1 \pmod{2^{\alpha+2}\gcd(N,\mu)k}.$
	Choosing $\left[h\right]_{2^{\nu+\alpha+2}}'=[h]_{2^{\alpha+2}k\gcd(N,\mu)}'$ in addition\footnote{Note that $h[h]_{2^{\alpha+2}k\gcd(N,\mu)}'\equiv -1 \pmod{2^{\alpha+2}k\gcd(N,\mu)}$ implies that $h[h]_{2^{\alpha+2}k\gcd(N,\mu)}'\equiv -1 \pmod{2^{\nu+\alpha+2}}$, since $2^{\nu+\alpha+2}\mid (2^{\alpha+2}k\gcd(N,\mu))$. Thus $[h]_{2^{\alpha+2}k\gcd(N,\mu)}'$ is a possible choice for $\left[h\right]_{2^{\nu+\alpha+2}}'$.}, we obtain that 
	\begin{scriptsize}
		\begin{align*}
			& K_{k,j,N,2}(n,r,\kappa) \\
			=& (1-i) \varepsilon_{\frac{\mu}{\gcd(N,\mu)}}  \sqrt{\frac{2^{\alpha-1}\gcd(N,\mu)}{N}} \left(\frac{-2^{\nu+\alpha}}{\frac{N \mu}{2^\alpha}}\right) \left(\frac{\frac{N2^\nu}{\gcd(N,\mu)}}{\frac{\mu}{\gcd(N,\mu)}}\right) \exp\left(\frac{2\pi i}{4}\right) \delta_{\nu-\alpha>1} \notag \\
			& \times  \sum_{\substack{h\pmod{k} \\ \gcd(h,k)=1 }} (-1)^{\frac{h^2-1}{8}\nu+\frac{(\mu-1)(h-1)}{4}+\frac{h-1}{2}+\frac{h^2-1}{8}(\nu+\alpha)} \varepsilon_{\frac{hN \mu}{2^\alpha}} \left(\frac{h}{\gcd(N,\mu)}\right)   \notag \\
			& \times   \exp\left(\frac{2\pi i}{24\cdot 2^{\alpha+2}k\gcd(N,\mu)} \left( \left(\left(-24n+2+k^2-3k\right)2^{\alpha+2}\gcd(N,\mu)- 24j^2 2^{\nu+\alpha+2} [4]_{\frac{\mu}{\gcd(N,\mu)}}' \left[\frac{N2^\nu}{\gcd(N,\mu)}\right]_{\frac{\mu}{\gcd(N,\mu)}}' \right. \right. \right. \\
			& \left. \left. \left. \hspace{4.5cm} -24\mu j^2\left[\frac{N\mu}{2^\alpha}\right]_{2^{\nu+\alpha+2}}'\gcd(N,\mu)\rule{0pt}{0.5cm}\right)h \right.\right. \\
			&\left.\left. \hspace{3cm}- \left( \left(-24N\kappa^2 -24\kappa r +1-k^2\right)2^{\alpha+2}\gcd(N,\mu) - 24r^2 2^{\nu+\alpha+2} [4]_{\frac{\mu}{\gcd(N,\mu)}}' \left[\frac{N2^\nu}{\gcd(N,\mu)}\right]_{\frac{\mu}{\gcd(N,\mu)}}'\right. \right. \right. \\
			&\left. \left. \left. \hspace{4cm} -24\mu r^2\left[\frac{N\mu}{2^\alpha}\right]_{2^{\nu+\alpha+2}}'\gcd(N,\mu)\rule{0pt}{0.5cm}\right)[h]_{2^{\alpha+2}k\gcd(N,\mu)}'\rule{0pt}{0.5cm}\right)\rule{0pt}{0.5cm} \right)  \\
			&\times \left(\rule{0pt}{0.7cm} \delta_{(\gcd(N,\mu)2^{\alpha+1})\mid (hj+ r)}  \exp\left(\frac{2 \pi i}{24k} \left( h^2[h]_{2^{\alpha+2}k\gcd(N,\mu)}'(1-k^2)+\frac{12rj}{N}\right)\right) \right. \\
			& \left. \times \exp\left(\frac{2\pi i}{2^{\nu+\alpha+2}} \left(-2jr\left[\frac{N \mu}{2^\alpha}\right]_{2^{\nu+\alpha+2}}' \right)\right)   \exp\left(\frac{2\pi i}{k\gcd(N,\mu)} \left(-2^{\nu+1} jr [4]_{\frac{\mu}{\gcd(N,\mu)}}' \left[\frac{N2^\nu}{\gcd(N,\mu)}\right]_{\frac{\mu}{\gcd(N,\mu)}}' \right) \right)\right.  \notag \\
			& \left.- \delta_{(\gcd(N,\mu)2^{\alpha+1})\mid (hj- r)}   \exp\left(\frac{2 \pi i}{24k} \left( h^2[h]_{2^{\alpha+2}k\gcd(N,\mu)}'(1-k^2)-\frac{12rj}{N}\right)\right) \right. \\
			&\left. \times \exp\left(\frac{2\pi i}{2^{\nu+\alpha+2}} \left(2jr\left[\frac{N \mu}{2^\alpha}\right]_{2^{\nu+\alpha+2}}' \right)\right)  \exp\left(\frac{2\pi i}{k\gcd(N,\mu)} \left(2^{\nu+1}jr [4]_{\frac{\mu}{\gcd(N,\mu)}}' \left[\frac{N2^\nu}{\gcd(N,\mu)}\right]_{\frac{\mu}{\gcd(N,\mu)}}' \right) \right) \rule{0pt}{0.7cm}\right).
		\end{align*}
	\end{scriptsize}
	
	We now need to split into two cases, $3\nmid k$ and $3\mid k$. In the first case we obtain $3\mid (k^2-1)$.
	Choosing $[h]_{2^{\alpha+2}k\gcd(N,\mu)}'$ such that $h[h]_{2^{\alpha+2}k\gcd(N,\mu)}' \equiv -1 \pmod{2^{\alpha+5}k\gcd(N,\mu)}$, analogously to above, yields\footnote{We are allowed to do this since $\gcd(8,h)=1$, this is because we know that $h$ is odd. Additionally we use that $h[h]_{2^{\alpha+5}k\gcd(N,\mu)}'\equiv -1 \pmod{8k}$ since $(8k) \mid (2^{\alpha+5}k\gcd(N,\mu))$.}
	\begin{scriptsize}
		\begin{align*}
			&K_{k,j,N,2}(n,r,\kappa) \\
			=& (1-i)  \varepsilon_{\frac{\mu}{\gcd(N,\mu)}}  \sqrt{\frac{2^{\alpha-1}\gcd(N,\mu)}{N}} \left(\frac{-2^{\nu+\alpha}}{\frac{N \mu}{2^\alpha}}\right) \left(\frac{\frac{N2^\nu}{\gcd(N,\mu)}}{\frac{\mu}{\gcd(N,\mu)}}\right) \exp\left(\frac{2\pi i}{4}\right) \delta_{\nu-\alpha>1} \notag \\
			& \times  \sum_{\substack{h\pmod{k} \\ \gcd(h,k)=1 }} (-1)^{\frac{h^2-1}{8}\nu+\frac{(\mu-1)(h-1)}{4}+\frac{h-1}{2}+\frac{h^2-1}{8}(\nu+\alpha)} \varepsilon_{\frac{hN \mu}{2^\alpha}} \left(\frac{h}{\gcd(N,\mu)}\right)   \notag \\
			&\times \left(\rule{0pt}{0.7cm} \delta_{(\gcd(N,\mu)2^{\alpha+1})\mid (hj+ r)}  \exp\left(\frac{2 \pi i}{8k} \left(\frac{12rj}{3N}\right)\right) \right. \\
			& \left. \times \exp\left(\frac{2\pi i}{2^{\nu+\alpha+2}} \left(-2jr\left[\frac{N \mu}{2^\alpha}\right]_{2^{\nu+\alpha+2}}' \right)\right)   \exp\left(\frac{2\pi i}{k\gcd(N,\mu)} \left(-2^{\nu+1} jr [4]_{\frac{\mu}{\gcd(N,\mu)}}' \left[\frac{N2^\nu}{\gcd(N,\mu)}\right]_{\frac{\mu}{\gcd(N,\mu)}}' \right) \right)\right.  \notag \\
			& \left.- \delta_{(\gcd(N,\mu)2^{\alpha+1})\mid (hj- r)}   \exp\left(\frac{2 \pi i}{8k} \left( -\frac{12rj}{3N}\right)\right) \right. \\
			&\left. \times \exp\left(\frac{2\pi i}{2^{\nu+\alpha+2}} \left(2jr\left[\frac{N \mu}{2^\alpha}\right]_{2^{\nu+\alpha+2}}' \right)\right)  \exp\left(\frac{2\pi i}{k\gcd(N,\mu)} \left(2^{\nu+1}jr [4]_{\frac{\mu}{\gcd(N,\mu)}}' \left[\frac{N2^\nu}{\gcd(N,\mu)}\right]_{\frac{\mu}{\gcd(N,\mu)}}' \right) \right) \rule{0pt}{0.7cm}\right) \\
			& \times   \exp\left(\frac{2\pi i}{24\cdot 2^{\alpha+2}k\gcd(N,\mu)} \left( \left(\left(-24n+1+2k^2-3k\right)2^{\alpha+2}\gcd(N,\mu)- 24j^2 2^{\nu+\alpha+2} [4]_{\frac{\mu}{\gcd(N,\mu)}}' \left[\frac{N2^\nu}{\gcd(N,\mu)}\right]_{\frac{\mu}{\gcd(N,\mu)}}' \right. \right. \right. \\
			& \left. \left. \left. \hspace{4.5cm} -24\mu j^2\left[\frac{N\mu}{2^\alpha}\right]_{2^{\nu+\alpha+2}}'\gcd(N,\mu)\rule{0pt}{0.5cm}\right)h \right.\right. \\
			&\left.\left. \hspace{3cm}- \left( \left(-24N\kappa^2 -24\kappa r +1-k^2\right)2^{\alpha+2}\gcd(N,\mu) - 24r^2 2^{\nu+\alpha+2} [4]_{\frac{\mu}{\gcd(N,\mu)}}' \left[\frac{N2^\nu}{\gcd(N,\mu)}\right]_{\frac{\mu}{\gcd(N,\mu)}}'\right. \right. \right. \\
			&\left. \left. \left. \hspace{4cm} -24\mu r^2\left[\frac{N\mu}{2^\alpha}\right]_{2^{\nu+\alpha+2}}'\gcd(N,\mu)\rule{0pt}{0.5cm}\right)[h]_{2^{\alpha+5}k\gcd(N,\mu)}'\rule{0pt}{0.5cm}\right)\rule{0pt}{0.5cm} \right) \\
			\eqqcolon& K_{k,j,N,2,+}(n,r,\kappa)+ K_{k,j,N,2,-}(n,r,\kappa),
		\end{align*}
	\end{scriptsize}
	\hspace{-0.25cm} where
	\begin{small}
		\begin{align*}
			K_{k,j,N,2,\pm}(n,r,\kappa)
			=& \epsilon_{e,\pm}(k,j,N,r) \frac{1}{2^{2\alpha+6}\gcd(N,\mu)^2} \sum_{s=0}^{\gcd(N,\mu)2^{\alpha+1}-1} \exp\left(\pm 2 \pi i \frac{rs}{\gcd(N,\mu)2^{\alpha+1}}\right) \\
			& \times \sum_{\substack{h\pmod{2^{\alpha+5}k\gcd(N,\mu)} \\ \gcd(h,2^{\alpha+5}k\gcd(N,\mu))=1 }} (-1)^{\frac{h^2-1}{8}\nu+\frac{(\mu-1)(h-1)}{4}+\frac{h-1}{2}+\frac{h^2-1}{8}(\nu+\alpha)} \varepsilon_{\frac{hN \mu}{2^\alpha}} \left(\frac{h}{\gcd(N,\mu)}\right)   \notag \\
			& \times   \exp\left(\frac{2\pi i}{ 2^{\alpha+5}k\gcd(N,\mu)} \left( \left(\mu_5+16 jks\right)h - \nu_5[h]_{2^{\alpha+5}k\gcd(N,\mu)}'\right)\right),  
		\end{align*}
	\end{small}
	\hspace{-0.25cm} with
	\begin{scriptsize}
		\begin{align*}
		\epsilon_{e,\pm}&(k,j,N,r)\coloneqq \pm(1-i)  \varepsilon_{\frac{\mu}{\gcd(N,\mu)}}  \sqrt{\frac{2^{\alpha-1}\gcd(N,\mu)}{N}} \left(\frac{-2^{\nu+\alpha}}{\frac{N \mu}{2^\alpha}}\right) \left(\frac{\frac{N2^\nu}{\gcd(N,\mu)}}{\frac{\mu}{\gcd(N,\mu)}}\right) \exp\left(\frac{2\pi i}{4}\right) \delta_{\nu-\alpha>1}  \exp\left(\frac{2 \pi i}{8k} \left(\pm\frac{12rj}{3N}\right)\right) \\
		&\times \exp\left(\frac{2\pi i}{2^{\nu+\alpha+2}} \left(\mp 2jr\left[\frac{N \mu}{2^\alpha}\right]_{2^{\nu+\alpha+2}}' \right)\right)   \exp\left(\frac{2\pi i}{k\gcd(N,\mu)} \left(\mp 2^{\nu+1} jr [4]_{\frac{\mu}{\gcd(N,\mu)}}' \left[\frac{N2^\nu}{\gcd(N,\mu)}\right]_{\frac{\mu}{\gcd(N,\mu)}}' \right) \right),\\
		\mu_5\coloneqq& \left(-8n+\frac{1+2k^2}{3}-k\right)2^{\alpha+2}\gcd(N,\mu)- 8j^2 2^{\nu+\alpha+2} [4]_{\frac{\mu}{\gcd(N,\mu)}}' \left[\frac{N2^\nu}{\gcd(N,\mu)}\right]_{\frac{\mu}{\gcd(N,\mu)}}'  -8\mu j^2\left[\frac{N\mu}{2^\alpha}\right]_{2^{\nu+\alpha+2}}'\gcd(N,\mu),\\
		\nu_5\coloneqq&	\left(-8N\kappa^2 -8\kappa r +\frac{1-k^2}{3}\right)2^{\alpha+2}\gcd(N,\mu) - 8r^2 2^{\nu+\alpha+2} [4]_{\frac{\mu}{\gcd(N,\mu)}}' \left[\frac{N2^\nu}{\gcd(N,\mu)}\right]_{\frac{\mu}{\gcd(N,\mu)}}' -8\mu r^2\left[\frac{N\mu}{2^\alpha}\right]_{2^{\nu+\alpha+2}}'\gcd(N,\mu).
		\end{align*}
	\end{scriptsize}
	\hspace{-0.25cm} Note that $\mu_5, \nu_5 \in \Z$, since $3\mid (k^2-1) $ is equivalent to $3 \mid  (2k^2 +1)$.

	Analogously to above we note that $16 \mid (2^{\alpha+5}k\gcd(N,\mu))$, $(-1)^{\frac{h^2-1}{8}\nu+\frac{(\mu-1)(h-1)}{4}+\frac{h-1}{2}+\frac{h^2-1}{8}(\nu+\alpha)} $ only depends on $h$ modulo $16$, and $\varepsilon_{\frac{hN \mu}{2^\alpha}}$ only depends on $h$ modulo $4$, means we can also look at it modulo $16$, since $4\mid 16$. Thus we obtain
	\begin{footnotesize}
		\begin{align}\label{equation: Kloosterman 3nmidk even}
			K_{k,j,N,2,\pm}(n,r,\kappa) =& \epsilon_{e,\pm}(k,j,N,r) \frac{1}{2^{2\alpha+6}\gcd(N,\mu)^2} \frac{1}{16} \sum_{s=0}^{\gcd(N,\mu)2^{\alpha+1}-1} \exp\left(\pm 2 \pi i \frac{rs}{\gcd(N,\mu)2^{\alpha+1}}\right) \notag \\
			& \times \sum_{j\pmod{16}}   (-1)^{\frac{j^2-1}{8}\nu+\frac{(\mu-1)(j-1)}{4}+\frac{j-1}{2}+\frac{j^2-1}{8}(\nu+\alpha)} \varepsilon_{\frac{jN \mu}{2^\alpha}} \sum_{\ell\pmod{16}} e^{\frac{-2\pi i j\ell}{16}} \notag  \\
			& \times  \sum_{\substack{h\pmod{2^{\alpha+5}k\gcd(N,\mu)}  \\ \gcd(h,2^{\alpha+5}k\gcd(N,\mu))=1 }}  \ \left(\frac{h}{\gcd(N,\mu)}\right)   \\
			& \times  \exp\left(\frac{2\pi i}{ 2^{\alpha+5}k\gcd(N,\mu)} \left( \left(\mu_5 +16 jks+\frac{ 2^{\alpha+5}\ell k\gcd(N,\mu)}{16}\right)h - \nu_5[h]_{2^{\alpha+5}k\gcd(N,\mu)}'\right)\right). \notag 
		\end{align}
	\end{footnotesize}
	
	In the second case, $3\mid k$, we have that $3\nmid h$ and thus $\gcd(24,h)=1$.
	Choosing $[h]_{2^{\alpha+2}k\gcd(N,\mu)}'$ such that $h[h]_{2^{\alpha+2}k\gcd(N,\mu)}' \equiv -1 \pmod{24\cdot 2^{\alpha+2}k\gcd(N,\mu)}$, analogously to above, yields\footnote{Using that $h[h]_{24\cdot 2^{\alpha+2}k\gcd(N,\mu)}'\equiv -1 \pmod{24k}$ since $(24k) \mid (24\cdot 2^{\alpha+2}k\gcd(N,\mu))$.}
		\begin{align*}
			K_{k,j,N,2}(n,r,\kappa) \eqqcolon K^*_{k,j,N,2,+}(n,r,\kappa) + K^*_{k,j,N,2,-}(n,r,\kappa),
		\end{align*}
	where analogously to the first case
	\begin{footnotesize}
		\begin{align}\label{equation: Kloosterman 3midk even}
			K_{k,j,N,2,\pm}(n,r,\kappa) 	=& \epsilon_{e,\pm}(k,j,N,r) \frac{1}{3\cdot2^{2\alpha+6}\gcd(N,\mu)^2} \frac{1}{16} \sum_{s=0}^{\gcd(N,\mu)2^{\alpha+1}-1} \exp\left(\pm 2\pi i \frac{rs}{\gcd(N,\mu)2^{\alpha+1}}\right) \notag \\
			& \times \sum_{j\pmod{16}}   (-1)^{\frac{j^2-1}{8}\nu+\frac{(\mu-1)(j-1)}{4}+\frac{j-1}{2}+\frac{j^2-1}{8}(\nu+\alpha)} \varepsilon_{\frac{jN \mu}{2^\alpha}} \sum_{\ell\pmod{16}} e^{\frac{-2\pi i j\ell}{16}} \notag  \\
			& \times  \sum_{\substack{h\pmod{3\cdot2^{\alpha+5}k\gcd(N,\mu)}  \\ \gcd(h,3\cdot2^{\alpha+5}k\gcd(N,\mu))=1 }}  \ \left(\frac{h}{\gcd(N,\mu)}\right)    \\
			& \times  \exp\left(\frac{2\pi i}{ 3\cdot2^{\alpha+5}k\gcd(N,\mu)} \left( \left(\mu_6 + 48jks+\frac{ 3\cdot2^{\alpha+5}\ell k\gcd(N,\mu)}{16}\right)h - \nu_6[h]_{3\cdot2^{\alpha+5}k\gcd(N,\mu)}'\right)\right), \notag 
		\end{align}
	\end{footnotesize}
	\hspace{-0.25cm} with
	\begin{tiny}
		\begin{align*}
			\mu_6\coloneqq& \left(-24n+1+2k^2-3k\right)2^{\alpha+2}\gcd(N,\mu)- 24j^2 2^{\nu+\alpha+2} [4]_{\frac{\mu}{\gcd(N,\mu)}}' \left[\frac{N2^\nu}{\gcd(N,\mu)}\right]_{\frac{\mu}{\gcd(N,\mu)}}'  -24\mu j^2\left[\frac{N\mu}{2^\alpha}\right]_{2^{\nu+\alpha+2}}'\gcd(N,\mu),\\
			\nu_6\coloneqq&	\left(-24N\kappa^2 -24\kappa r +1-k^2\right)2^{\alpha+2}\gcd(N,\mu) - 24r^2 2^{\nu+\alpha+2} [4]_{\frac{\mu}{\gcd(N,\mu)}}' \left[\frac{N2^\nu}{\gcd(N,\mu)}\right]_{\frac{\mu}{\gcd(N,\mu)}}' -24\mu r^2\left[\frac{N\mu}{2^\alpha}\right]_{2^{\nu+\alpha+2}}'\gcd(N,\mu).
		\end{align*}
	\end{tiny}

	We now note that we can bound \eqref{equation: Kloosterman 3nmidk even} by 
	\begin{tiny}
		\begin{align*}
		&\left|K_{k,j,N,2,\pm}(n,r,\kappa)\right| \notag \\
		\leq& \left|\epsilon_{e,\pm}(k,j,N,r) \frac{1}{2^{2\alpha+6}\gcd(N,\mu)^2} \frac{1}{16} \right| \sum_{s=0}^{\gcd(N,\mu)2^{\alpha+1}-1} \sum_{j\pmod{16}}   \sum_{\ell\pmod{16}}  \notag  \\
		& \times \left|  \sum_{\substack{h\pmod{2^{\alpha+5}k\gcd(N,\mu)}  \\ \gcd(h,2^{\alpha+5}k\gcd(N,\mu))=1 }}  \ \left(\frac{h}{\gcd(N,\mu)}\right)  \exp\left(\frac{2\pi i}{ 2^{\alpha+5}k\gcd(N,\mu)} \left( \left(\mu_5 +16jsk +\frac{ 2^{\alpha+5}\ell k\gcd(N,\mu)}{16}\right)h - \nu_5[h]_{2^{\alpha+5}k\gcd(N,\mu)}'\right)\right)\rule{0pt}{1cm}\right|.
		\end{align*}
	\end{tiny}
	\hspace{-0.25cm} Moreover we obtain that \eqref{equation: Kloosterman 3midk even} is bounded by
	\begin{small}
		\begin{align*}
		&\left|K_{k,j,N,2,\pm}(n,r,\kappa) \right| \notag \\
		\leq& \left|\epsilon_{e,\pm}(k,j,N,r) \frac{1}{3\cdot2^{2\alpha+6}\gcd(N,\mu)^2} \frac{1}{16} \right| \sum_{s=0}^{\gcd(N,\mu)2^{\alpha+1}-1} \sum_{j\pmod{16}}    \sum_{\ell\pmod{16}}  \notag  \\
		& \times \left| \sum_{\substack{h\pmod{3\cdot2^{\alpha+5}k\gcd(N,\mu)}  \\ \gcd(h,3\cdot2^{\alpha+5}k\gcd(N,\mu))=1 }}  \ \left(\frac{h}{\gcd(N,\mu)}\right)  \right. \\
		& \left.\hspace{0.5cm}  \times  \exp\left(\frac{2\pi i}{ 3\cdot2^{\alpha+5}k\gcd(N,\mu)} \left( \left(\mu_6+ 48jsk +\frac{ 3\cdot2^{\alpha+5}\ell k\gcd(N,\mu)}{16}\right)h - \nu_6[h]_{3\cdot2^{\alpha+5}k\gcd(N,\mu)}'\right)\right) \rule{0pt}{1.2cm} \right|.
		\end{align*}
	\end{small} 
	
	Both last sums over $h$ are of the required shape, so we can bound them using Malishev's result (see  Lemma \ref{Lemma: Malishev}) and obtain that they are 
	\begin{scriptsize}
		\begin{align*}
			O\left((2^{\alpha+5}k\gcd(N,\mu))^{\frac{1}{2}+\varepsilon} \min\left(\gcd\left(\mu_5 +16jks+\frac{ 2^{\alpha+5}\ell k\gcd(N,\mu)}{16},2^{\alpha+5}k\gcd(N,\mu)\right)^\frac{1}{2}, \gcd\left(\nu_5,2^{\alpha+5}k\gcd(N,\mu)\right)^\frac{1}{2}\right)\right),
		\end{align*}
	\end{scriptsize}
	\hspace{-0.25cm} respectively
	\begin{footnotesize}
		\begin{align*}
			&O\left((3\cdot2^{\alpha+5}k\gcd(N,\mu))^{\frac{1}{2}+\varepsilon} \right. \\
			&\left.\times \min\left(\gcd\left(\mu_6 + 48jks +\frac{ 3\cdot2^{\alpha+5}\ell k\gcd(N,\mu)}{16},3\cdot2^{\alpha+5}k\gcd(N,\mu)\right)^\frac{1}{2}, \gcd\left(\nu_6,3\cdot2^{\alpha+5}k\gcd(N,\mu)\right)^\frac{1}{2}\right)\right),
		\end{align*}
	\end{footnotesize}
	\hspace{-0.25cm} for $\varepsilon>0$.

	We see that $2^{\alpha+5}\gcd(N,\mu)\leq 3\cdot2^{\alpha+5}\gcd(N,\mu)\leq 3\cdot 2^5 N^2=O_N(1)$,
	\begin{small}
		\begin{align*}
			\min\left(\gcd\left(\mu_5+ 16jks +\frac{ 2^{\alpha+5}\ell k\gcd(N,\mu)}{16},2^{\alpha+5}k\gcd(N,\mu)\right)^\frac{1}{2}, \gcd\left(\nu_5,2^{\alpha+5}k\gcd(N,\mu)\right)^\frac{1}{2}\right) = O_N\left(n\right),
		\end{align*}
	\end{small}
	\hspace{-0.25cm} and 
	\begin{footnotesize}
		\begin{align*}
			\min\left(\gcd\left(\mu_6+48jks +\frac{ 3\cdot2^{\alpha+5}\ell k\gcd(N,\mu)}{16},3\cdot2^{\alpha+5}k\gcd(N,\mu)\right)^\frac{1}{2}, \gcd\left(\nu_6,3\cdot2^{\alpha+5}k\gcd(N,\mu)\right)^\frac{1}{2}\right) =  O_N\left(n\right).
		\end{align*}
	\end{footnotesize}
	\hspace{-0.25cm} This yields
	\begin{small}
		\begin{align*}
			K_{k,j,N,2,\pm}(n,r,\kappa) =& 	O_N\left(\left|\epsilon_{e,\pm}(k,j,N,r)   \frac{1}{2^{2\alpha+6}\gcd(N,\mu)^2} \frac{1}{16}\right| \sum\limits_{s=0}^{\gcd(N,\mu)2^{\alpha+1}-1} \sum\limits_{j\pmod{16}}   \sum\limits_{\ell \pmod{16}} k^{\frac{1}{2}+\varepsilon}n\right) \\
			=& 	O_N\left(\left|\epsilon_{e,\pm}(k,j,N,r)   \frac{1}{2^{2\alpha+6}\gcd(N,\mu)^2} \frac{1}{16}\right| 16^2 \gcd(N,\mu)2^{\alpha+1}  k^{\frac{1}{2}+\varepsilon}n\right) =  O_N\left(n k^{\frac{1}{2}+\varepsilon} \right),
		\end{align*}
	\end{small}
	\hspace{-0.25cm} and analogously $K_{k,j,N,2,\pm}^*(n,r,\kappa) = O_N(n k^{\frac{1}{2}+\varepsilon})$, since $\epsilon_{e,\pm}(k,j,N,r)=O_N(1)$. We thus showed that
	\begin{align*}
		K_{k,j,N,2}(n,r,\kappa) =& O_N\left(n k^{\frac{1}{2}+\varepsilon} \right),
	\end{align*}
	which finally gives 
	\begin{align*}
		K_{k,j,N}(n,r,\kappa) =& O_N\left(n k^{\frac{1}{2}+\varepsilon} \right)
	\end{align*}
	and finishes the proof for $k$ even and therefore the proof of Theorem \ref{Thm Kloosterman bound}.

\section{Applying the Circle Method}\label{section: Circle Method}
In this section we use the Circle Method and ideas of Rademacher and Zuckerman \cite{rademacher1938partition,rademacher1938fourier,rademacherzuckerman1938fourier} to finally prove Theorem \ref{main Thm}. As we already mentioned in the introduction of this paper, the Kloosterman sum and transformation behavior of our family of functions is a little more complicated here than it is in \cite{rademacher1938fourier}, for example. Even though we now have a nice bound for for our Kloosterman sum this will cause extra work in bounding the error parts.

Let $0\leq h< k\leq J$ with $\gcd(h,k)=1$ and a parameter $J\in\N$ that later tends to infinity.
Furthermore let $\frac{h_1}{k_1} <\frac hk <\frac{h_2}{k_2}$ be consecutive fractions in the \textit{Farey sequence} of order $J$ (a series of
fractions $\frac{p_j}{q_j}$ with $p_j\leq q_j \leq J$, $\gcd(p_j , q_j) = 1$ and $\frac{p_j}{q_j}<\frac{p_\ell}{q_\ell}$ for all $j<\ell$). We denote the {\it Farey arc} $\xi_{h,k}$ to be the image of $(\frac{h_1+h}{k_1+k},\frac{h_2+h}{k_2+k})$ under the map (see e.g.\@, \cite{rademacher1938partition})
$$\phi \mapsto e^{-2\pi J^{-2}+2\pi i \phi }$$
and $\xi_{0,1}$ to be the image of $(-\frac{1}{J+1},\frac{1}{J+1})$ (see e.g.\@, \cite[equation (5.2.9)]{andrews1976partitions}).

Note that for the fraction $\frac{h}{k}$ and its neighbors we have (see \cite[page 503]{rademacher1938fourier})
$$ hk_1-h_1k =1 \quad \text{and} \quad h_2k-hk_2=1,$$
which is equivalent to
$$ hk_1\equiv 1 \pmod{k} \quad \text{and} \quad hk_2\equiv -1 \pmod{k},$$
or, using that $hh'\equiv -1 \pmod{k}$,
\begin{align}\label{equation: equivalences for k_1 and k_2}
	k_1 \equiv -h'\pmod{k} \quad \text{and} \quad k_2 \equiv h' \pmod{k}.
\end{align}
Since $\frac{h_1+h}{k_1+k}$ and $\frac{h_2+h}{k_2+k}$ do not belong to the Farey sequence of order $J$ we have $k_1+k >J$ and $k_2+k>J$, which, together with $k_1,k_2\leq J$, enclose $k_1$ and $k_2$ to the intervals
\begin{align}\label{equation: intervals k_1 and k_2}
J-k < k_1 \leq J, \qquad J-k < k_2 \leq J.
\end{align}
The formulae \eqref{equation: equivalences for k_1 and k_2} and \eqref{equation: intervals k_1 and k_2} thus determine $k_1$ and $k_2$ uniquely as functions of $h$ and $k$.

Using Cauchy's formula and \eqref{equation: series Aj,N} we write (see e.g.\@, \cite[equation (14.4)]{bringmann2017harmonic})
\begin{align}\label{equation: fourier coefficients integral}
	a_{j,N}(n) = \frac{1}{2\pi i} \int_{C_J} \frac{q^{\frac{1}{24}-\frac{j^2}{4N}}\CA_{j,N}(\tau)}{q^{n+1}} dq,
\end{align}
where $C_J$ is an arbitrary path inside the unit disk that loops around zero in the counterclockwise direction exactly once. Here we choose $C_J$ to be the circle of radius $e^{-2\pi J^{-2}}<1$ and note that we can split this circle into disjoint Farey arcs as done in Rademacher's original works \cite{rademacher1938fourier, rademacher1938partition} by
$$ \bigcup_{\substack{0\leq h < k \leq J \\ \gcd(h,k)=1 }} \xi_{h,k} = C_J,$$
which will allow us to focus on the most important cusps. Using this we are able to rewrite \eqref{equation: fourier coefficients integral} as
\begin{align*}
	a_{j,N}(n) = \sum_{\substack{0\leq h < k \leq J \\ \gcd(h,k)=1 }} \frac{1}{2\pi i} \int_{\xi_{h,k}} \frac{\CA_{j,N}(\tau)}{q^{g_{j,N}(n)+1}} dq,
\end{align*}
where we denoted $g_{j,N}(n)\coloneqq n+\frac{j^2}{4N}-\frac{1}{24}$ for simplicity.
Defining (see e.g.\@, \cite[equation (3.5)]{rademacher1938fourier}) 
\begin{align*}
\vartheta_{h,k}' \coloneqq \frac{1}{k(k_1+k)}, \quad \quad \vartheta_{h,k}'' \coloneqq \frac{1}{k(k_2+k)}
\end{align*}
and substituting $\tau = \frac hk +i(J^{-2}-i\phi)$ (arc length centered at $e^{2\pi i\frac hk}$) thus leads to
\begin{align}\label{equation: fourier coefficients integral over phi}
	a_{j,N}(n) 
	&= \sum_{\substack{0\leq h < k \leq J \\ \gcd(h,k)=1 }} e^{-\frac{2\pi ih}{k}g_{j,N}(n)} \int_{-\vartheta_{h,k}'}^{\vartheta_{h,k}''} e^{2\pi g_{j,N}(n)\left(J^{-2}-i\phi\right)} \CA_{j,N}\left(\frac hk +i\left(J^{-2}-i\phi\right)\right) d\phi.
\end{align}
Let $\omega\coloneqq J^{-2}-i\phi$. To better control the integrand's behavior near rational numbers we use the modular transformation $M_{h,k}$ from Theorem \ref{main Thm}. In addition equation \eqref{connection C and I} gives us
\begin{align} \label{equation transformation A}
\CA_{j,N}\left(\frac hk + i\omega \right) &= \sum_{r=1}^{N-1} \chi_{j,r} (N,M_{h,k}) \, \eta \lp \frac{h'}{k}+\frac{i}{k^2\omega}\rp^{-1} \,
\CI_{r,N,\frac{h'}{k}} \lp \frac{h'}{k}+\frac{i}{k^2\omega}\rp ,
\end{align}
where we used that
$$M_{h,k}\left(\frac hk + i\omega \right) = \frac{h'\left(\frac hk + i\omega \right)-\frac{hh'+1}{k}}{k\left(\frac hk + i\omega \right)-h}= \frac{ih'\omega-\frac 1k}{ik\omega} = \frac{h'}{k}+\frac{i}{k^2\omega}. $$

Taking a closer look at \eqref{equation transformation A} we obtain
\begin{small}
	\begin{align*}
	\CA_{j,N}\left(\frac hk + i\omega\right) 
	=& \sum_{r=1}^{N-1} \chi_{j,r} (N,M_{h,k}) \left(  \left(\eta \lp \frac{h'}{k}+\frac{i}{k^2\omega}\rp^{-1} -e^{-\frac{\pi i}{12}\left(\frac{h'}{k}+\frac{i}{k^2\omega}\right)}\right)
	\CI_{r,N,\frac{h'}{k}} \lp \frac{h'}{k}+\frac{i}{k^2\omega}\rp  \right. \\
	& \left. \quad \quad \quad \quad \quad \quad \quad\quad \quad \quad +\zeta_{24k}^{-h'}  \left(\CI_{r,N,\frac{h'}{k},\frac{1}{24}}^e \lp \frac{h'}{k}+\frac{i}{k^2\omega}\rp +\CI^*_{r,N,\frac{h'}{k},\frac{1}{24}} \lp \frac{h'}{k}+\frac{i}{k^2\omega}\rp\right) \rule{0pt}{0.6cm}\right).
	\end{align*} 
\end{small}
\hspace{-0.25cm} Note that in this calculation we set $d=\frac{1}{24}$ in the splitting of our Mordell-type integral such that our assumption from above simplifies to $\sqrt{\frac N6}\notin\Z$.

Plugging this into \eqref{equation: fourier coefficients integral over phi} gives us
\begin{align}\label{equation: splitting a_{j,N}}
a_{j,N}(n) = a_{\CI, j,N}(n) +a_{\CI^e, j,N}(n)  + a_{\CI^*, j,N}(n),
\end{align}
with 
\begin{small}
\begin{align*}
a_{\CI, j,N}(n) \coloneqq& \sum_{\substack{0\leq h < k \leq J \\ \gcd(h,k)=1 }} e^{-\frac{2\pi ih}{k}g_{j,N}(n)} \int_{-\vartheta_{h,k}'}^{\vartheta_{h,k}''} e^{2\pi g_{j,N}(n)\omega} \sum_{r=1}^{N-1} \chi_{j,r} (N,M_{h,k}) \\
& \quad \quad \quad \quad \quad \quad \quad \quad \quad \quad \quad \quad\times \left(\eta \lp \frac{h'}{k}+\frac{i}{k^2\omega}\rp^{-1} -e^{-\frac{\pi i}{12}\left(\frac{h'}{k}+\frac{i}{k^2\omega}\right)}\right)
\CI_{r,N,\frac{h'}{k}} \lp \frac{h'}{k}+\frac{i}{k^2\omega}\rp  d\phi,\\
 a_{\CI^e, j,N}(n) \coloneqq& \sum_{\substack{0\leq h < k \leq J \\ \gcd(h,k)=1 }} e^{-\frac{2\pi ih}{k}g_{j,N}(n)} \int_{-\vartheta_{h,k}'}^{\vartheta_{h,k}''} e^{2\pi g_{j,N}(n)\omega} \sum_{r=1}^{N-1} \chi_{j,r} (N,M_{h,k}) 
 \zeta_{24k}^{-h'}  \CI_{r,N,\frac{h'}{k},\frac{1}{24}}^e \lp \frac{h'}{k}+\frac{i}{k^2\omega}\rp d\phi, \\
 a_{\CI^*, j,N}(n) \coloneqq& \sum_{\substack{0\leq h < k \leq J \\ \gcd(h,k)=1 }} e^{-\frac{2\pi ih}{k}g_{j,N}(n)} \int_{-\vartheta_{h,k}'}^{\vartheta_{h,k}''} e^{2\pi g_{j,N}(n)\omega} \sum_{r=1}^{N-1} \chi_{j,r} (N,M_{h,k}) 
 \zeta_{24k}^{-h'}  \CI_{r,N,\frac{h'}{k},\frac{1}{24}}^* \lp \frac{h'}{k}+\frac{i}{k^2\omega}\rp d\phi.
 \end{align*}
\end{small}

\subsection{Principal part}
We now look at each of the terms in $a_{j,N}(n)$ seperately. We start with the part that contains the principal part, namely $a_{\CI^*, j,N}(n)$. Analogously to \cite{rademacher1938fourier} we split our integral as
$$ \int_{-\vartheta_{h,k}'}^{\vartheta_{h,k}''} = \int_{-\frac{1}{k(J+k)}}^{\frac{1}{k(J+k)}} + \int_{-\vartheta_{h,k}'}^{-\frac{1}{k(J+k)}} + \int_{\frac{1}{k(J+k)}}^{\vartheta_{h,k}''},$$
since we have 
$$ - \vartheta_{h,k}' =  -\frac{1}{k(k_1+k)}  \leq -\frac{1}{k(J+k)}\leq \frac{1}{k(J+k)} \leq \frac{1}{k(k_2+k)} = \vartheta_{h,k}'' .$$
Defining
\begin{small}
	\begin{align*}
	 a_{\CI^*, j,N,0}(n) \coloneqq& \sum_{\substack{0\leq h < k \leq J \\ \gcd(h,k)=1 }} e^{-\frac{2\pi ih}{k} g_{j,N}(n)} \int_{-\frac{1}{k(J+k)}}^{\frac{1}{k(J+k)}} e^{2\pi  g_{j,N}(n)\omega} \sum_{r=1}^{N-1} \chi_{j,r} (N,M_{h,k}) 
	\zeta_{24k}^{-h'}  \CI_{r,N,\frac{h'}{k},\frac{1}{24}}^* \lp \frac{h'}{k}+\frac{i}{k^2\omega}\rp d\phi, \\
	 a_{\CI^*, j,N,1}(n) \coloneqq& \sum_{\substack{0\leq h < k \leq J \\ \gcd(h,k)=1 }} e^{-\frac{2\pi ih}{k} g_{j,N}(n)} \int_{-\vartheta_{h,k}'}^{-\frac{1}{k(J+k)}} e^{2\pi  g_{j,N}(n)\omega} \sum_{r=1}^{N-1} \chi_{j,r} (N,M_{h,k}) 
	\zeta_{24k}^{-h'}  \CI_{r,N,\frac{h'}{k},\frac{1}{24}}^* \lp \frac{h'}{k}+\frac{i}{k^2\omega}\rp d\phi, \\
	 a_{\CI^*, j,N,2}(n) \coloneqq& \sum_{\substack{0\leq h < k \leq J \\ \gcd(h,k)=1 }} e^{-\frac{2\pi ih}{k} g_{j,N}(n)} \int_{\frac{1}{k(J+k)}}^{\vartheta_{h,k}''} e^{2\pi  g_{j,N}(n)\omega} \sum_{r=1}^{N-1} \chi_{j,r} (N,M_{h,k}) 
	\zeta_{24k}^{-h'}  \CI_{r,N,\frac{h'}{k},\frac{1}{24}}^* \lp \frac{h'}{k}+\frac{i}{k^2\omega}\rp d\phi,
\end{align*}
\end{small}
\hspace{-0.25cm} we thus obtain 
\begin{align}\label{equation: splitting a_{I^*,j,N}}
a_{\CI^*, j,N}(n) = a_{\CI^*, j,N,0}(n) + a_{ \CI^*, j,N,1}(n) + a_{\CI^*, j,N,2}(n). 
\end{align}
We go on by estimating $a_{\CI^*, j,N,0}(n)$. Using \eqref{representation of I^*} we see that
\begin{align*}
	 a_{\CI^*, j,N,0}(n) 
	 =& \frac{i}{\pi} \sum_{k=1}^J\int_{-\frac{1}{k(J+k)}}^{\frac{1}{k(J+k)}} e^{2\pi g_{j,N}(n)\omega} e^{\frac{2\pi}{24 k^2\omega}} \sum_{r=1}^{N-1} \sum_{\kappa\in\Z} \mathrm{P.V.} 
	\int_{-\sqrt{\frac{1}{24N}}}^{\sqrt{\frac{1}{24N}}} \frac{e^{-2 \pi N \frac{1}{k^2\omega} x^2}}{x-\lp \kappa + \frac{r}{2N} \rp } \, dx \\ 
	& \quad \quad \quad \quad \times  \sum_{\substack{0\leq h < k \\ \gcd(h,k)=1 }} e^{-\frac{2\pi ih}{k}g_{j,N}(n)} \chi_{j,r} (N,M_{h,k}) 
	\zeta_{24k}^{-h'} e^{2 \pi i N \left(\kappa+\frac{r}{2N}\right)^2\frac{h'}{k} } d\phi.
\end{align*}
Plugging in the definition of $K_{k,j,N}(n,r,\kappa)$ from \eqref{Kloostermansum}, which is well defined and a Kloosterman sum of modulus $k$, and taking the finite sum over $r$ out of the integral gives us
\begin{small}
	\begin{align*}
	a_{\CI^*, j,N,0}(n) &= \frac{i}{\pi} \sum_{k=1}^J \sum_{r=1}^{N-1} \int_{-\frac{1}{k(J+k)}}^{\frac{1}{k(J+k)}} e^{2\pi \left(g_{j,N}(n)\omega+\frac{1}{24 k^2\omega}\right)}  \sum_{\kappa\in\Z}  K_{k,j,N}(n,r,\kappa) \, \mathrm{P.V.} 
	\int_{-\sqrt{\frac{1}{24N}}}^{\sqrt{\frac{1}{24N}}} \frac{e^{-2 \pi N \frac{1}{k^2\omega} x^2}}{x-\lp \kappa + \frac{r}{2N} \rp } \, dx  \, d\phi.
\end{align*}
\end{small}
\hspace{-0.25cm} Note that, for arbitrary $\ell\in\Z$, we have 
\begin{align*}
\zeta_{24k}^{\left(24N\left((\kappa+\ell k)+\frac{r}{2N}\right)^2-1\right)h'} 
=& \zeta_{24k}^{\left(24N\left(\kappa+\frac{r}{2N}\right)^2-1\right)h'}
\end{align*}
and therefore $K_{k,j,N}(n,r,\kappa+\ell k)= K_{k,j,N}(n,r,\kappa)$.
Shifting $\kappa \mapsto \kappa +\ell k$ for $\ell\in\Z$ we thus obtain
\begin{small}
	\begin{align*}
	 a_{\CI^*, j,N,0}(n) =& \frac{i}{\pi} \sum_{k=1}^J \sum_{r=1}^{N-1} \sum_{\kappa=0}^{k-1}  K_{k,j,N}(n,r,\kappa) \\
	 &\times  \int_{-\frac{1}{k(J+k)}}^{\frac{1}{k(J+k)}} e^{2\pi \left(g_{j,N}(n)\omega+\frac{1}{24 k^2\omega}\right)} \lim\limits_{L\to\infty} \sum_{\ell=-L}^L  \mathrm{P.V.} 
	\int_{-\sqrt{\frac{1}{24N}}}^{\sqrt{\frac{1}{24N}}} \frac{e^{-2 \pi N \frac{1}{k^2\omega} x^2}}{x-\lp \kappa +\ell k + \frac{r}{2N} \rp } \, dx  \, d\phi.
	\end{align*}
\end{small}
\hspace{-0.25cm} Note that the convergence is uniform in our finite range so we are allowed to switch the order of the integral and the sum over $\ell$. 
Using the equality (see \cite[equation (3.10)]{bringmann2019framework})
\begin{align}\label{equation: cotangens}
\pi \cot (\pi x) = \lim\limits_{L\to\infty} \sum_{\ell=-L}^L \frac{1}{x+\ell},
\end{align} 
which holds for all $x\in\C\backslash\Z$, we thus obtain
	\begin{align*}
	a_{\CI^*, j,N,0}(n) =& -i \sum_{k=1}^J \sum_{r=1}^{N-1} \sum_{\kappa=0}^{k-1}  \frac{K_{k,j,N}(n,r,\kappa)}{k} \int_{-\frac{1}{k(J+k)}}^{\frac{1}{k(J+k)}} e^{2\pi \left(g_{j,N}(n)\omega+\frac{1}{24 k^2\omega}\right)}    \\
	&\times \mathrm{P.V.} 
	\int_{-\sqrt{\frac{1}{24N}}}^{\sqrt{\frac{1}{24N}}}   e^{-2 \pi N \frac{1}{k^2\omega} x^2} \cot\left(\pi \lp -\frac xk+ \frac{\kappa}{k} + \frac{r}{2Nk} \rp \right)  \, dx  \, d\phi.
	\end{align*}
Here the possible poles in $\Z$ have already been excluded by the principal value integral.

Note that we only have a simple pole in $x=\kappa+\ell k+\frac{r}{2N}$ if and only if $\kappa=\ell=0$ and $r< \sqrt{\frac{N}{6}}$. 
Since one can show that there exists a constant $C_{\varepsilon,k}$ ($C_{\varepsilon,k}\to0$ as $\varepsilon\to0$ and $k$ fix) such that
\begin{small}
	\begin{align*}
	\left| \left(\lim\limits_{\varepsilon\to 0}\!\!\!\!  \int\limits_{\substack{\left|x-\frac{r}{2N}\right|\geq \varepsilon \\ |x|\leq \sqrt{\frac{1}{24N}}}} \!\!\!\! e^{-2 \pi N \frac{1}{k^2\omega} x^2} \cot\left(\pi \lp -\frac xk + \frac{r}{2Nk} \rp \right) dx \right)- \!\!\!\!  \int\limits_{\substack{\left|x-\frac{r}{2N}\right|\geq \varepsilon \\ |x|\leq \sqrt{\frac{1}{24N}}}} \!\!\!\! e^{-2 \pi N \frac{1}{k^2\omega} x^2} \cot\left(\pi \lp -\frac xk + \frac{r}{2Nk} \rp \right) dx \right|\leq C_{\varepsilon,k}
\end{align*}
\end{small}
\hspace{-0.25cm} uniformly in $\phi$, using the Taylor expansion of the exponential together with \eqref{equation: cotangens}, we see that we only have integrals over compact subsets with continuous integrands and can additionally switch the integrals over $x$ and $\phi$ to get
\begin{align*}
a_{\CI^*, j,N,0}(n) = -i \sum_{k=1}^J \sum_{r=1}^{N-1} \sum_{\kappa=0}^{k-1} \frac{ K_{k,j,N}(n,r,\kappa)}{k} \, \mathrm{P.V.} 
\int_{-\sqrt{\frac{1}{24N}}}^{\sqrt{\frac{1}{24N}}} &\cot\left(\pi \lp -\frac xk+ \frac{\kappa}{k} + \frac{r}{2Nk} \rp \right)\\
&\times \int_{-\frac{1}{k(J+k)}}^{\frac{1}{k(J+k)}} e^{2\pi \left(g_{j,N}(n)\omega+\frac{1}{24 k^2\omega}-\frac{Nx^2}{k^2\omega}\right)} \, d\phi   \, dx .
\end{align*}
To evaluate the integral over $\phi$ we substitute $\omega = J^{-2}-i\phi$ to obtain
\begin{align*}
\int_{-\frac{1}{k(J+k)}}^{\frac{1}{k(J+k)}} e^{2\pi \left(g_{j,N}(n)\omega+\frac{1}{24 k^2\omega}-\frac{Nx^2}{k^2\omega}\right)} \, d\phi  = -i \int_{J^{-2}-\frac{i}{k(J+k)}}^{J^{-2}+\frac{i}{k(J+k)}} e^{2\pi \left(g_{j,N}(n)\omega+\frac{1}{k^2}\left(\frac{1}{24 }-Nx^2\right)\frac{1}{\omega}\right)} \, d\omega.
\end{align*}
Then we view it as an integral over the right vertical of a rectangle in the complex $\omega$-plane and denote the integrals over the other sides, $\gamma_1,\gamma_2$, and $\gamma_3$, by $R_1(x),R_2(x)$, and $R_3(x)$ respectively, where we dropped the dependence on the other parameters for simplicity (see Figure \ref{fig: integral rectangle}).

\begin{figure}[ht]
	\centering
	\begin{tikzpicture}  
	
	\begin{scope}[thick] 
	
	\draw[] (-2,-2) circle (1pt) node[left] {$-J^{-2}-i\frac{1}{k(J+k)}$};
	\draw[->] (-2,-2) -- (0,-2) node[below] {$\gamma_3$};  
	\draw[-] (0,-2) -- (2,-2) ;
	\draw[] (2,-2) circle (1pt) node[right] {$J^{-2}-i\frac{1}{k(J+k)}$};
	
	\draw[->] (2,-2) -- (2,-0.5)  ;  
	\draw[-] (2,-0.5) -- (2,1) ;
	\draw[] (2,1) circle (1pt) node[right] {$J^{-2}+i\frac{1}{k(J+k)}$};

	\draw[<-] (0,1) -- (2,1) ; 
	\draw[] (0,1)  node[above] {$\gamma_1$} ; 
	\draw[-] (0,1) -- (-2,1) ;
	\draw[] (-2,1) circle (1pt) node[left] {$-J^{-2}+i\frac{1}{k(J+k)}$};
	
	\draw[->] (-2,1) -- (-2,-0.5) node[left] {$\gamma_2$};  
	\draw[-] (-2,-0.5) -- (-2,-2) ;
		
	\end{scope}
	
	\end{tikzpicture}
	\caption{Rectangle in the complex $\omega$-plane.}
	\label{fig: integral rectangle}
\end{figure}
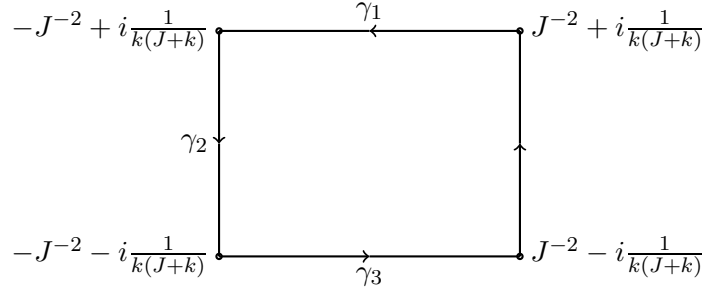

\noindent Let $R_{k,j,J,N}(n,x)$ denote the integral over the whole rectangle and let $R$ denote the rectangle itself with counterclockwise orientation such that
\begin{small}
\begin{align} \label{equation: a_0 rectangle splitting}
a_{\CI^*, j,N,0}(n) =& -i \sum_{k=1}^J \sum_{r=1}^{N-1} \sum_{\kappa=0}^{k-1}  \frac{K_{k,j,N}(n,r,\kappa)}{k} \mathrm{P.V.} 
\int_{-\sqrt{\frac{1}{24N}}}^{\sqrt{\frac{1}{24N}}} \cot\left(\pi \lp -\frac xk+ \frac{\kappa}{k} + \frac{r}{2Nk} \rp \right) \\
&\hspace{5.5cm} \times  (-i)\left(R_{k,j,J,N}(n,x) - R_1(x)-R_2(x)-R_3(x)\right)   \, dx . \notag 
\end{align}
\end{small}
\hspace{-0.25cm} We now have 
\begin{align*}
\frac{1}{2\pi i}R_{k,j,J,N}(n,x) = &   \frac{1}{2\pi i} \int_R e^{2\pi \left(g_{j,N}(n)\omega+\frac{1}{k^2}\left(\frac{1}{24 }-Nx^2\right)\frac{1}{\omega}\right)} \, d\omega\\
= &  \frac{1}{2\pi i} \int_R \sum_{\mu\geq0}  \frac{(2\pi g_{j,N}(n)\omega)^\mu}{\mu!} \sum_{\nu\geq0} \frac{\left(\frac{2\pi}{k^2\omega}\left(\frac{1}{24 }-Nx^2\right)\right)^\nu}{\nu!} \, d\omega,
\end{align*}
using the Taylor expansion of the exponential function. According to the residue theorem this integral equals zero unless there is a simple pole in $\omega=0$, which requires $\nu=\mu+1$.
Thus we obtain
\begin{align*}
	\frac{1}{2\pi i}R_{k,j,J,N}(n,x) 
	= & \frac{\sqrt{\frac{1}{24}-Nx^2}}{k\sqrt{g_{j,N}(n)}}\sum_{\mu\geq0} \frac{\left(\frac{2\pi\sqrt{g_{j,N}(n)}}{k} \sqrt{\frac{1}{24}-Nx^2}\right)^{2\mu +1} }{\mu!(\mu+1)!}.
\end{align*}
Using the representation (see e.g.\@, \cite[equation 10.25.2]{nist})
$$ I_\alpha(z) = \sum_{m\geq0} \frac{1}{m! \Gamma(m+\alpha+1)} \left(\frac{z}{2}\right)^{2m+\alpha}$$
of the $I$-Bessel function of first kind and order $\alpha$ we furthermore see that 
\begin{align*}
\frac{1}{2\pi i}R_{k,j,J,N}(n,x) 
= & \frac{\sqrt{\frac{1}{24}-Nx^2}}{k\sqrt{g_{j,N}(n)}} I_1\left(\frac{4\pi\sqrt{g_{j,N}(n)}}{k} \sqrt{\frac{1}{24}-Nx^2}\right).
\end{align*}

Plugging this into \eqref{equation: a_0 rectangle splitting} we obtain
\begin{small}
	\begin{align*}
	& \hspace{-4mm} a_{\CI^*, j,N,0}(n)\\
	=& - \sum_{k=1}^J \sum_{r=1}^{N-1} \sum_{\kappa=0}^{k-1}  \frac{K_{k,j,N}(n,r,\kappa)}{k} \mathrm{P.V.} 
	\int_{-\sqrt{\frac{1}{24N}}}^{\sqrt{\frac{1}{24N}}} \cot\left(\pi \lp -\frac xk+ \frac{\kappa}{k} + \frac{r}{2Nk} \rp \right)  \notag \\
	& \qquad \qquad \qquad \qquad \qquad \qquad \qquad \qquad \qquad  \times \frac{2\pi i \sqrt{\frac{1}{24}-Nx^2}}{k\sqrt{g_{j,N}(n)}} I_1\left(\frac{4\pi\sqrt{g_{j,N}(n)}}{k} \sqrt{\frac{1}{24}-Nx^2}\right)     \, dx \\
	& + \sum_{k=1}^J \sum_{r=1}^{N-1} \sum_{\kappa=0}^{k-1}  \frac{K_{k,j,N}(n,r,\kappa)}{k} \mathrm{P.V.} 
	\int_{-\sqrt{\frac{1}{24N}}}^{\sqrt{\frac{1}{24N}}} \cot\left(\pi \lp -\frac xk+ \frac{\kappa}{k} + \frac{r}{2Nk} \rp \right)  (R_1(x)+R_2(x)+R_3(x))\, dx\\
	 \eqqcolon& ~ M + E.
	\end{align*}
\end{small}
\hspace{-0.25cm} We are now left with estimating $E$. We can rewrite it as
\begin{small}
	\begin{align*}
	E_1 +E_2& + E_3\\
 \coloneqq&\sum_{k=1}^J \sum_{r=1}^{\left\lceil \sqrt{\frac{N}{6}}-1\right\rceil}   \frac{K_{k,j,N}(n,r,0)}{k} \mathrm{P.V.} 
\int_{-\sqrt{\frac{1}{24N}}}^{\sqrt{\frac{1}{24N}}} \cot\left(\pi \lp -\frac xk + \frac{r}{2Nk} \rp \right)  (R_1(x)+R_2(x)+R_3(x))\, dx  \\
  &+ \sum_{k=1}^J \sum_{r=\left\lceil \sqrt{\frac{N}{6}}\right\rceil}^{N-1}   \frac{K_{k,j,N}(n,r,0)}{k} 
\int_{-\sqrt{\frac{1}{24N}}}^{\sqrt{\frac{1}{24N}}} \cot\left(\pi \lp -\frac xk + \frac{r}{2Nk} \rp \right)  (R_1(x)+R_2(x)+R_3(x))\, dx  \\
 &+ \sum_{k=1}^J \sum_{r=1}^{N-1} \sum_{\kappa=1}^{k-1}  \frac{K_{k,j,N}(n,r,\kappa)}{k} 
\int_{-\sqrt{\frac{1}{24N}}}^{\sqrt{\frac{1}{24N}}} \cot\left(\pi \lp -\frac xk+ \frac{\kappa}{k} + \frac{r}{2Nk} \rp \right)  (R_1(x)+R_2(x)+R_3(x)) \, dx,
\end{align*}
\end{small}
\hspace{-0.25cm} since the integral over $x$ only has a simple pole in $x= \frac{r}{2N}$ for $\kappa=0$ and $r<\sqrt{\frac N6}$.

To bound the remaining sides of the rectangle we start with $R_1(x)$ and $R_3(x)$. On this paths of integration we have $\omega=u\pm i\frac{1}{k(J+k)}$, where $-J^{-2}\leq u \leq J^{-2}$, and 
\begin{align*}
\operatorname{Re}\left(\frac 1\omega\right) = \frac{u}{u^2+\frac{1}{k^2(J+k)^2}}= \frac{uk^2(J+k)^2}{u^2k^2(J+k)^2+1} <J^{-2} k^2 (J+k)^2 \leq 4k^2.
\end{align*}
Thus we see that the integrand is less than $e^{2\pi g_{j,N}(n)J^{-2} +8\pi\left(\frac{1}{24 }-Nx^2\right)} $ and obtain that
\begin{align}\label{equation: bounds on R_1 and R_3}
|R_1(x)| \text{ and } |R_3(x)| < 2J^{-2} e^{2\pi g_{j,N}(n)J^{-2} +8\pi\left(\frac{1}{24 }-Nx^2\right)}.
\end{align}

For $R_2(x)$ the path of integration is given by $\omega=-J^{-2}-iv$, where $-\frac{1}{k(J+k)}\leq v\leq \frac{1}{k(J+k)} $. Note that $g_{j,N}(n),~\frac{1}{24 }-Nx^2\geq 0$. Since the real part of $\omega$ is always $-J^{-2}<0$ and $\operatorname{Re}\left(\frac 1\omega\right) = \frac{-J^{-2}}{J^{-4}+v^2}<0$ we conclude that the integrand is $O(1)$ and therefore
\begin{align}\label{equation: bound on R_2}
|R_2(x)| <\frac{2}{k(J+k)} < 2 k^{-1}J^{-1}.
\end{align} 

From \eqref{equation: bounds on R_1 and R_3} and  \eqref{equation: bound on R_2} we conclude that we can bound $R_1(x)+R_2(x)+R_3(x)$ by 
\begin{align}\label{bound: other sides of the rectangle}
  O\left(k^{-1}J^{-1} e^{2\pi g_{j,N}(n)J^{-2} +8\pi\left(\frac{1}{24 }-Nx^2\right)} \right).
\end{align}

We start by evaluating $E_3$, which, using the calculations before together with \eqref{bound: Kloosterman sum}, equals
\begin{align*}
 O_N\left( \frac{n}{J} e^{2\pi g_{j,N}(n)J^{-2} } \sum_{k=1}^J \sum_{r=1}^{N-1} \sum_{\kappa=1}^{k-1} k^{-\frac 32+\varepsilon} 
\int_{-\sqrt{\frac{1}{24N}}}^{\sqrt{\frac{1}{24N}}} \left|\cot\left(\pi \lp -\frac xk+ \frac{\kappa}{k} + \frac{r}{2Nk} \rp \right)   \right|  \, dx\right).
\end{align*}
We note that
\begin{align*}
\left|\cot\left(\pi \lp -\frac xk+ \frac{\kappa}{k} + \frac{r}{2Nk} \rp \right)   \right| = \frac{\left|\cos\left(\pi \lp -\frac xk+ \frac{\kappa}{k} + \frac{r}{2Nk} \rp \right)\right| }{\left|\sin\left(\pi \lp -\frac xk+ \frac{\kappa}{k} + \frac{r}{2Nk} \rp \right)\right| }  \leq  \frac{1}{\left|\sin\left(\pi \lp -\frac xk+ \frac{\kappa}{k} + \frac{r}{2Nk} \rp \right)\right| }   .
\end{align*}
Similar to \cite[page 11]{bringmann2009rankpartition} we furthermore have
\begin{align*}
\left| \sin\left(\pi \lp -\frac xk+ \frac{\kappa}{k} + \frac{r}{2Nk} \rp \right)\right| =&  \left| \sin\left(\pi \left| -\frac xk+ \frac{\kappa}{k} + \frac{r}{2Nk} \right| \right)\right| \\
\gg& \min\left(\left\{\left|-\frac xk+ \frac{\kappa}{k} + \frac{r}{2Nk}\right|\right\},1-\left\{\left|-\frac xk+ \frac{\kappa}{k} + \frac{r}{2Nk}\right|\right\}\right)\\
=& \begin{cases}
\left\{\left|-\frac xk+ \frac{\kappa}{k} + \frac{r}{2Nk}\right|\right\} & \text{ if } 0\leq \left\{\left|-\frac xk+ \frac{\kappa}{k} + \frac{r}{2Nk}\right|\right\}\leq \frac 12,\\
1-\left\{\left|-\frac xk+ \frac{\kappa}{k} + \frac{r}{2Nk}\right|\right\} & \text{ else},
\end{cases}
\end{align*}
where $\{x\}\coloneqq x-\lfloor x\rfloor$ is the \textit{fractional part} of a real number $x$.

Taking a closer look at $-\frac xk+ \frac{\kappa}{k} + \frac{r}{2Nk}$ we observe that
\begin{align*}
-\frac xk+ \frac{\kappa}{k} + \frac{r}{2Nk} \leq&   1+\frac{1}{k\sqrt{24N}} -\frac{1}{2k} -\frac{1}{2Nk} =1-\frac{\sqrt{6N}+\sqrt{\frac{6}{N}}-1}{k\sqrt{24N}} < 1
\end{align*}
and 
\begin{align*}
-\frac xk+ \frac{\kappa}{k} + \frac{r}{2Nk}  \geq \frac{1}{k} \left(\frac{1}{2N}+\kappa-\frac{1}{\sqrt{24N}}\right) \geq 
\frac{47}{48 k}, 
\end{align*}
since we have $\kappa\geq1$.
In particular this gives us that 
\begin{align*}
&\hspace{-2cm}\min\left(\left\{\left|-\frac xk+ \frac{\kappa}{k} + \frac{r}{2Nk}\right|\right\},1-\left\{\left|-\frac xk+ \frac{\kappa}{k} + \frac{r}{2Nk}\right|\right\}\right)\\
=& \min\left(\frac{1}{k} \left( -x+\kappa +\frac{r}{2N}\right) ,1-\frac{1}{k} \left( -x+\kappa +\frac{r}{2N}\right) \right) \\
=& \begin{cases}
\frac{1}{k} \left( -x+\kappa +\frac{r}{2N}\right)  & \text{ if }  0\leq \frac{1}{k} \left( -x+\kappa +\frac{r}{2N}\right)  \leq \frac 12,  \\
1-\frac{1}{k} \left( -x+\kappa +\frac{r}{2N}\right)  & \text{ if }  \frac 12 < \frac{1}{k} \left( -x+\kappa +\frac{r}{2N}\right)  <1,
\end{cases} \\
=& \begin{cases}
\frac{1}{k} \left( -x+\kappa +\frac{r}{2N}\right) & \text{ if }   x\geq \frac{r}{2N}+\kappa-\frac k2,  \\
1-\frac{1}{k} \left( -x+\kappa +\frac{r}{2N}\right) & \text{ if }  x< \frac{r}{2N}+\kappa-\frac k2 .
\end{cases}
\end{align*}

Using this our $O$-term contributes to
\begin{scriptsize}
	\begin{align} \label{equation: O-term contribution E_3}
	& O_N\left( \frac{n}{J} e^{2\pi g_{j,N}(n)J^{-2} }  \sum_{k=1}^J k^{-\frac 32+\varepsilon} \sum_{r=1}^{N-1}  \sum_{\kappa=1}^{k-1} 
	\int_{-\sqrt{\frac{1}{24N}}}^{\sqrt{\frac{1}{24N}}}    \frac{1}{\min\left(\frac{1}{k} \left( -x+\kappa +\frac{r}{2N}\right),1-\frac{1}{k} \left( -x+\kappa +\frac{r}{2N}\right)\right)} \, dx\right) \notag \\
	=& O_N\left( \frac{n}{J} e^{2\pi g_{j,N}(n)J^{-2} }  \sum_{k=1}^J k^{-\frac 32+\varepsilon} \sum_{r=1}^{N-1} \sum_{\kappa=1}^{k-1}\left(
	\int\limits_{\substack{|x|\leq \sqrt{\frac{1}{24N}}\\x\geq \frac{r}{2N}+\kappa-\frac k2 }}    \frac{k}{-x+\kappa+\frac{r}{2N}} \, dx + \int\limits_{\substack{|x|\leq \sqrt{\frac{1}{24N}}\\x< \frac{r}{2N}+\kappa-\frac k2 }}    \frac{1}{1-\frac{1}{k} \left( -x+\kappa +\frac{r}{2N}\right)} \, dx\right)\right) \notag \\
	=& O_N\left( \frac{n}{J} e^{2\pi g_{j,N}(n)J^{-2} }  \sum_{k=1}^J k^{-\frac 12+\varepsilon} \sum_{r=1}^{N-1} \sum_{\kappa=1}^{k-1}\left(
	\delta_{\max\left(-\sqrt{\frac{1}{24N}}, \frac{r}{2N}+\kappa-\frac k2\right)<\sqrt{\frac{1}{24N}}} 
	\int_{\max\left(-\sqrt{\frac{1}{24N}}, \frac{r}{2N}+\kappa-\frac k2\right)}^{\sqrt{\frac{1}{24N}}}   \frac{1}{-x+\kappa+\frac{r}{2N}} \, dx \right.\right. \notag \\
	&\left.\left.\qquad+ \delta_{\min\left(\sqrt{\frac{1}{24N}}, \frac{r}{2N}+\kappa-\frac k2\right)>-\sqrt{\frac{1}{24N}}} 
	\int_{-\sqrt{\frac{1}{24N}}}^{\min\left(\sqrt{\frac{1}{24N}}, \frac{r}{2N}+\kappa-\frac k2\right)}    \frac{1}{k+x- \kappa - \frac{r}{2N}} \, dx\right)\right) \notag \\
	=& O_N\left( \frac{n}{J} e^{2\pi g_{j,N}(n)J^{-2} }  \sum_{k=1}^J k^{-\frac 12+\varepsilon} \sum_{r=1}^{N-1} \left(
	\sum_{\kappa=1}^{k-1}\delta_{\max\left(-\sqrt{\frac{1}{24N}}, \frac{r}{2N}+\kappa-\frac k2\right)<\sqrt{\frac{1}{24N}}} 
	\int_{\max\left(-\sqrt{\frac{1}{24N}}, \frac{r}{2N}+\kappa-\frac k2\right)}^{\sqrt{\frac{1}{24N}}}   \frac{1}{-x+\kappa+\frac{r}{2N}} \, dx \right.\right. \notag \\
	&\left.\left.\qquad+\sum_{\kappa'=1}^{k-1} \delta_{\min\left(\sqrt{\frac{1}{24N}}, \frac{r}{2N}+\frac k2-\kappa'\right)>-\sqrt{\frac{1}{24N}}} 
	\int_{-\sqrt{\frac{1}{24N}}}^{\min\left(\sqrt{\frac{1}{24N}}, \frac{r}{2N}+\frac k2-\kappa'\right)}    \frac{1}{\kappa'+x - \frac{r}{2N}} \, dx\right)\right),
	\end{align}
\end{scriptsize}
\hspace{-0.25cm} by substituting $\kappa'=k-\kappa$.

We note that in the first integral we have $\frac{r}{2N} -x  > -\sqrt{\frac{1}{48}}\eqqcolon C_1,$
while in the second integral we have $ x-\frac{r}{2N} >-\frac{25}{48}\eqqcolon C_2$.
Thus we obtain that \eqref{equation: O-term contribution E_3} equals
\begin{small}
	\begin{align*}
		& O_N\left( \frac{n}{J} e^{2\pi g_{j,N}(n)J^{-2} }  \sum_{k=1}^J k^{-\frac 12+\varepsilon} \sum_{r=1}^{N-1} \left(
		\sum_{\kappa=1}^{k-1}  \frac{1}{\kappa+C_1}\delta_{\max\left(-\sqrt{\frac{1}{24N}}, \frac{r}{2N}+\kappa-\frac k2\right)<\sqrt{\frac{1}{24N}}} 
		\int_{\max\left(-\sqrt{\frac{1}{24N}}, \frac{r}{2N}+\kappa-\frac k2\right)}^{\sqrt{\frac{1}{24N}}}   \, dx \right.\right. \notag \\
		&\left.\left.\qquad+\sum_{\kappa'=1}^{k-1}  \frac{1}{\kappa'+C_2} \delta_{\min\left(\sqrt{\frac{1}{24N}}, \frac{r}{2N}+\frac k2-\kappa'\right)>-\sqrt{\frac{1}{24N}}} 
		\int_{-\sqrt{\frac{1}{24N}}}^{\min\left(\sqrt{\frac{1}{24N}}, \frac{r}{2N}+\frac k2-\kappa'\right)}    \, dx\right)\right).
	\end{align*}
\end{small}
\hspace{-0.25cm} Using that (combining \cite[equations 5.7.6 and 5.11.2]{nist})
$$ \sum_{\kappa=1}^{k-1} \frac{1}{\kappa +C} =O\left(\log(k)\right),$$
as $k\to\infty$ and for a $C>-1$, this yields\footnote{We used that if we had a function $f(x)$ on $[1,\infty)$ with $f(x)=O(\log(x))$ as $x\to\infty$ we know that we have $f(x)\leq C_1 \log(x+1)$ for all $x\geq x_0$ and that $f$ is bounded by $\frac{f(x)}{\log(x+1)}\leq C_2$ on $[1,x_0)$. In total this would give us $\frac{f(x)}{\log(x+1)}\leq C \coloneqq C_1+C_2$ everywhere.}
	\begin{small}
		\begin{align}\label{bound: E_3}
		E_3 =  O_N\left( \frac{n}{J} e^{2\pi g_{j,N}(n)J^{-2} }  \sum_{k=1}^J k^{-\frac 12+\varepsilon} \sum_{r=1}^{N-1} N^{-\frac{1}{2}} \log(k) \right) = O_N\left(  n   e^{2\pi g_{j,N}(n)J^{-2} }   J^{-\frac 12+\varepsilon}  \log(J)\right),
		\end{align}
	\end{small}
\hspace{-0.25cm} which tends to $0$ as $J\to\infty$.

We go on by evaluating $E_2$. Using the fact that $\cot(z) = O\left(\frac{1}{z}\right)$ as $z \to 0$ we see that
\begin{align*}
\cot\left(\pi \lp -\frac xk + \frac{r}{2Nk} \rp\right) = O\left(\frac{k}{\pi \lp -x + \frac{r}{2N} \rp}\right) =O_N(k).
\end{align*}
Together with \eqref{bound: other sides of the rectangle} and \eqref{bound: Kloosterman sum} this gives us that 
\begin{align}\label{bound: E_2}
 E_2=O_N\left(  ne^{2\pi g_{j,N}(n)J^{-2} }   J^{-\frac 12+\varepsilon} \right) .
\end{align}

Lastly we evaluate the part with poles, namely $E_1$. Extracting the pole in $\frac{r}{2N}$ yields 
\begin{footnotesize}
	\begin{align*}
	E_1 =& \sum_{k=1}^J \sum_{r=1}^{\left\lceil \sqrt{\frac{N}{6}}-1\right\rceil}   \frac{K_{k,j,N}(n,r,0)}{k} 
	\int_{-\sqrt{\frac{1}{24N}}}^{\sqrt{\frac{1}{24N}}} \cot\left(\pi \lp -\frac xk + \frac{r}{2Nk} \rp \right)  \left(R_1(x)-R_1\left(\frac{r}{2N}\right)\right) \, dx \notag \\
	&+\sum_{k=1}^J \sum_{r=1}^{\left\lceil \sqrt{\frac{N}{6}}-1\right\rceil}   \frac{K_{k,j,N}(n,r,0)}{k} 
	\int_{-\sqrt{\frac{1}{24N}}}^{\sqrt{\frac{1}{24N}}} \cot\left(\pi \lp -\frac xk + \frac{r}{2Nk} \rp \right)  \left(R_2(x)-R_2\left(\frac{r}{2N}\right)\right) \, dx \notag \\
	& +\sum_{k=1}^J \sum_{r=1}^{\left\lceil \sqrt{\frac{N}{6}}-1\right\rceil}   \frac{K_{k,j,N}(n,r,0)}{k} 
	\int_{-\sqrt{\frac{1}{24N}}}^{\sqrt{\frac{1}{24N}}} \cot\left(\pi \lp -\frac xk + \frac{r}{2Nk} \rp \right)  \left(R_3(x)-R_3\left(\frac{r}{2N}\right)\right) \, dx \notag \\
	&+ \sum_{k=1}^J \sum_{r=1}^{\left\lceil \sqrt{\frac{N}{6}}-1\right\rceil}   \frac{K_{k,j,N}(n,r,0)}{k}\left( R_1\left(\frac{r}{2N}\right) +  R_2\left(\frac{r}{2N}\right)+  R_3\left(\frac{r}{2N}\right)\right) \mathrm{P.V.} 
	\int_{-\sqrt{\frac{1}{24N}}}^{\sqrt{\frac{1}{24N}}} \cot\left(\pi \lp -\frac xk + \frac{r}{2Nk} \rp \right)   \, dx.
\end{align*}
\end{footnotesize}

We first concentrate on the parts
$$\sum_{k=1}^J \sum_{r=1}^{\left\lceil \sqrt{\frac{N}{6}}-1\right\rceil}   \frac{K_{k,j,N}(n,r,0)}{k} 
\int_{-\sqrt{\frac{1}{24N}}}^{\sqrt{\frac{1}{24N}}} \cot\left(\pi \lp -\frac xk + \frac{r}{2Nk} \rp \right)  \left(R_m(x)-R_m\left(\frac{r}{2N}\right)\right) \, dx,$$
with $m\in\{1,2,3\}$.
Defining $f_m(x)\coloneqq R_m(x)-R_m(\frac{r}{2N})$ and using the Taylor expansion we obtain that
\begin{align*}
	f_m(x) = f_m\left(\frac{r}{2N}\right) +f_m'\left(\frac{r}{2N}\right)\left(x-\frac{r}{2N}\right)+ \dots + \frac{f_m^{(\ell)}\left(\frac{r}{2N}\right)}{\ell!}\left(x-\frac{r}{2N}\right)^\ell +\widetilde{R}_\ell(x),
\end{align*}
where $\widetilde{R}_\ell(x)$ is the \textit{remainder term} defined as
$$ \widetilde{R}_\ell(x)\coloneqq \frac{f_m^{(\ell+1)}\left(\xi_x\right)}{(\ell+1)!}\left(x-\frac{r}{2N}\right)^{\ell+1},$$
for some real $\xi_x$ betwen $\frac{r}{2N}$ and $x$.
Choosing $\ell=0$ we obtain
\begin{align*}
	f_m(x) = f_m\left(\frac{r}{2N}\right) +f_m'\left(\xi_x\right)\left(x-\frac{r}{2N}\right) = f_m'\left(\xi_x\right)\left(x-\frac{r}{2N}\right).
\end{align*}

Next we focus on $m\in\{1,3\}$ and thus have
\begin{align}\label{equation: first three parts of E_1}
	&\sum_{k=1}^J \sum_{r=1}^{\left\lceil \sqrt{\frac{N}{6}}-1\right\rceil}   \frac{K_{k,j,N}(n,r,0)}{k} 
	\int_{-\sqrt{\frac{1}{24N}}}^{\sqrt{\frac{1}{24N}}} \cot\left(\pi \lp -\frac xk + \frac{r}{2Nk} \rp \right)  \left(R_m(x)-R_m\left(\frac{r}{2N}\right)\right) \, dx \notag\\
	=& \sum_{k=1}^J \sum_{r=1}^{\left\lceil \sqrt{\frac{N}{6}}-1\right\rceil}   \frac{K_{k,j,N}(n,r,0)}{k} 
	\int_{-\sqrt{\frac{1}{24N}}}^{\sqrt{\frac{1}{24N}}} \left(x-\frac{r}{2N}\right)\cot\left(\pi \lp -\frac xk + \frac{r}{2Nk} \rp \right)  f_m'\left(\xi_x\right)  \, dx.
\end{align}
Now we want to bound $\left|\left(x-\frac{r}{2N}\right)\cot\left(\pi \lp -\frac xk + \frac{r}{2Nk} \rp \right)\right|$ and $\left|f_m'\left(\xi_x\right)\right|$ seperately.
For the first one we use the Taylor series expansion around $\frac{r}{2N}$ and see that
\begin{align*}
	\left(x-\frac{r}{2N}\right)\cot\left(\pi \lp -\frac xk + \frac{r}{2Nk} \rp \right) = -\frac{k}{\pi} +\frac{1}{3} \frac{\pi}{k} \left(\frac{r}{2N}-x\right)^2 +O\left(\frac{ \left(\frac{r}{2N}-x\right)^4}{k^3}\right) = O_N(k)
\end{align*}
as $k\to\infty$.
For the second one we see that
\begin{align*}
	\left|f_m'\left(\xi_x\right)\right| = \left|\left(R_m(x)-R_m\left(\frac{r}{2N}\right)\right)' \Big\vert_{x=\xi_x}\right| = \left|R_m'\left(\xi_x\right)\right|,
\end{align*}
since $R_m\left(\frac{r}{2N}\right)$ is independent of $x$. We note that
$$ R_m(x) = \int_{\gamma_m}  e^{2\pi \left(g_{j,N}(n)\omega+\frac{1}{k^2}\left(\frac{1}{24 }-Nx^2\right)\frac{1}{\omega}\right)} \, d\omega = \int_{\gamma_m}  e^{2\pi g_{j,N}(n)\omega} e^{\frac{\pi}{12k^2\omega}} e^{-\frac{2\pi Nx^2}{k^2\omega}} \, d\omega,$$
where we have an integral over a compact set and continuously differentiable integrand and thus are allowed to switch the integral with a derivative. This yields
\begin{align*}
	\left|R_m'\left(\xi_x\right)\right| = \left|\int_{\gamma_m}  e^{2\pi g_{j,N}(n)\omega} e^{\frac{\pi}{12k^2\omega}} \left(-\frac{4\pi N\xi_x}{k^2\omega}\right)e^{-\frac{2\pi N\xi_x^2}{k^2\omega}} \, d\omega \right|.
\end{align*}
Remember that we have $\omega=u\pm i\frac{1}{k(J+k)}$ with $-J^{-2} \leq u \leq J^{-2}$ and $\operatorname{Re}\left(\frac{1}{\omega}\right)\leq 4k^2$. Additionally we see that
\begin{align*}
	\left|\frac{1}{\omega}\right| =& \left(\frac{u^2+\left(\mp\frac{1}{k(J+k)}\right)^2}{\left(u^2+\frac{1}{k^2(J+k)^2}\right)^2}\right)^\frac{1}{2}  
	= \left(\frac{1}{u^2+\frac{1}{k^2(J+k)^2}}\right)^\frac{1}{2}  = k(J+k) \left(\frac{1}{k^2(J+k)^2u^2+1}\right)^\frac{1}{2} 
	<k(J+k)
\end{align*}
and therefore $\left|\frac{1}{k^2\omega}\right| < \frac{J+k}{k}$. Thus our integrand in this cases is less than
$$ \frac{4\pi N|\xi_x|(J+k)}{k} e^{2\pi g_{j,N}(n)J^{-2}} e^{\frac{\pi}{3}} e^{-8\pi N\xi_x^2}  $$ 
which finally yields that
\begin{align*}
	|R_1'(\xi_x)| \text{ and } |R_3'(\xi_x)| <&  8\pi N|\xi_x| \frac{J+k}{J^2k} e^{2\pi g_{j,N}(n)J^{-2}} e^{\frac{\pi}{3}} e^{-8\pi N\xi_x^2} \\
	\leq&  \frac{16\pi N|\xi_x|}{Jk} e^{2\pi g_{j,N}(n)J^{-2}} e^{\frac{\pi}{3}} e^{-8\pi N\xi_x^2}.
\end{align*}
This gives us that \eqref{equation: first three parts of E_1} equals
\begin{small}
	\begin{align*}
	&O\left(\sum_{k=1}^J \sum_{r=1}^{\left\lceil \sqrt{\frac{N}{6}}-1\right\rceil}   \frac{nk^{\frac{1}{2}+\varepsilon}}{k} 
	\int_{-\sqrt{\frac{1}{24N}}}^{\sqrt{\frac{1}{24N}}} k  \frac{16\pi N|\xi_x|}{Jk} e^{2\pi g_{j,N}(n)J^{-2}} \, dx\right) 
	=O_N\left(n   e^{2\pi g_{j,N}(n)J^{-2}} J^{-\frac{1}{2}+\varepsilon} \right).
\end{align*}
\end{small}

For $m=2$ we define $\delta\coloneqq \min(kJ^{-\frac{3}{4}}, |\sqrt{\frac{1}{24N}}-\frac{r}{2N}|)$ and split our integral over $x$ as follows 
\begin{footnotesize}
	\begin{align*}
&\sum_{k=1}^J \sum_{r=1}^{\left\lceil \sqrt{\frac{N}{6}}-1\right\rceil}   \frac{K_{k,j,N}(n,r,0)}{k} 
\int_{-\sqrt{\frac{1}{24N}}}^{\sqrt{\frac{1}{24N}}} \cot\left(\pi \lp -\frac xk + \frac{r}{2Nk} \rp \right)  \left(R_2(x)-R_2\left(\frac{r}{2N}\right)\right) \, dx \\
=& \sum_{k=1}^J \sum_{r=1}^{\left\lceil \sqrt{\frac{N}{6}}-1\right\rceil}   \frac{K_{k,j,N}(n,r,0)}{k} 
\left(\left(\int_{-\sqrt{\frac{1}{24N}}}^{\frac{r}{2N}-\delta}+ \int_{\frac{r}{2N}+\delta}^{\sqrt{\frac{1}{24N}}}+ \int_{\frac{r}{2N}-\delta}^{\frac{r}{2N}+\delta}\right) \cot\left(\pi \lp -\frac xk + \frac{r}{2Nk} \rp \right)  \left(R_2(x)-R_2\left(\frac{r}{2N}\right)\right) \, dx\right).
\end{align*}
\end{footnotesize}
Using \eqref{bound: Kloosterman sum} we see that
\begin{small}
	\begin{align*}
&\sum_{k=1}^J \sum_{r=1}^{\left\lceil \sqrt{\frac{N}{6}}-1\right\rceil}   \frac{K_{k,j,N}(n,r,0)}{k} 
\left(\left(\int_{-\sqrt{\frac{1}{24N}}}^{\frac{r}{2N}-\delta}+ \int_{\frac{r}{2N}+\delta}^{\sqrt{\frac{1}{24N}}}\right) \cot\left(\pi \lp -\frac xk + \frac{r}{2Nk} \rp \right)  \left(R_2(x)-R_2\left(\frac{r}{2N}\right)\right) \, dx\right) \\
=& O_N\left(\sum_{k=1}^J \sum_{r=1}^{\left\lceil \sqrt{\frac{N}{6}}-1\right\rceil}   \frac{k^{\frac{1}{2}+\varepsilon}n}{k} 
\left(\left(\int_{-\sqrt{\frac{1}{24N}}}^{\frac{r}{2N}-\delta}+ \int_{\frac{r}{2N}+\delta}^{\sqrt{\frac{1}{24N}}}\right)\left| \cot\left(\pi \lp -\frac xk + \frac{r}{2Nk} \rp \right)\right|  \left|R_2(x)-R_2\left(\frac{r}{2N}\right)\right| \, dx\right)\right).
\end{align*}
\end{small}
\hspace{-0.25cm} Since we are away from $x=\frac{r}{2N}$ we can bound 
$$ \left| \cot\left(\pi \lp -\frac xk + \frac{r}{2Nk} \rp \right)\right| = O\left(\frac{k}{\pi\left(-x+\frac{r}{2N}\right)}\right) = O\left(\frac{k}{\delta}\right)$$
and
$$ \left|R_2(x)-R_2\left(\frac{r}{2N}\right)\right| =O\left(\left|R_2(x)\right|\right) = O\left(k^{-1}J^{-1}\right), $$
using \eqref{equation: bound on R_2}. Thus we can simplify our $O$-term to
\begin{align*}
	& O_N\left(\sum_{k=1}^J \sum_{r=1}^{\left\lceil \sqrt{\frac{N}{6}}-1\right\rceil}   \frac{k^{\frac{1}{2}+\varepsilon}n}{k} \frac{k}{\delta} k^{-1}J^{-1}
	\left(\left(\int_{-\sqrt{\frac{1}{24N}}}^{\frac{r}{2N}-\delta}+ \int_{\frac{r}{2N}+\delta}^{\sqrt{\frac{1}{24N}}}\right) \, dx\right)\right) = O_N\left( n J^{-1} \sum_{k=1}^J  k^{-\frac{1}{2}+\varepsilon} \delta^{-1} \right) .
\end{align*}
For $\delta= kJ^{-\frac{3}{4}}$ this equals
$$ O_N\left( n J^{-\frac{1}{4}} \sum_{k=1}^J  k^{-\frac{3}{2}+\varepsilon}  \right) = O_N\left( n J^{-\frac{1}{4}} \right) ,$$
while for $\delta= |\sqrt{\frac{1}{24N}}-\frac{r}{2N}|$ it equals
$$ O_N\left( n J^{-1} \sum_{k=1}^J  k^{-\frac{1}{2}+\varepsilon}  \right) = O_N\left( n J^{-\frac{1}{2}+\varepsilon}  \right) .$$
For the last integral, the one close to $\frac{r}{2N}$, we use the Taylor expansion as seen in the cases $m\in\{1,3\}$ and have 
\begin{align}\label{equation: m=2 integral close to pole}
& \sum_{k=1}^J \sum_{r=1}^{\left\lceil \sqrt{\frac{N}{6}}-1\right\rceil}   \frac{K_{k,j,N}(n,r,0)}{k}  \int_{\frac{r}{2N}-\delta}^{\frac{r}{2N}+\delta} \cot\left(\pi \lp -\frac xk + \frac{r}{2Nk} \rp \right)  \left(R_2(x)-R_2\left(\frac{r}{2N}\right)\right) \, dx \notag \\
=& \sum_{k=1}^J \sum_{r=1}^{\left\lceil \sqrt{\frac{N}{6}}-1\right\rceil}   \frac{K_{k,j,N}(n,r,0)}{k}  \int_{\frac{r}{2N}-\delta}^{\frac{r}{2N}+\delta} \cot\left(\pi \lp -\frac xk + \frac{r}{2Nk} \rp \right)  \left(x-\frac{r}{2N}\right) R_2'(\xi_x) \, dx.
\end{align}
Since $|\cot\left(\pi \lp -\frac xk + \frac{r}{2Nk} \rp \right)  \left(x-\frac{r}{2N}\right)| =O_N(k)$, as seen before, we only need to look at $|R_2'(\xi_x)|$. 
Recall that on $\gamma_2$ we had $ \omega=-J^{-2}-iv$ with $-\frac{1}{k(J+k)}\leq v\leq \frac{1}{k(J+k)}$ and $\operatorname{Re}(\omega), \operatorname{Re}\left(\frac{1}{\omega}\right)<0$.
Using that
\begin{align*}
\left|\frac{1}{\omega}\right| &= \left(\frac{J^{-4}+v^2}{\left(J^{-4}+v^2\right)^2}\right)^\frac{1}{2} 
= \left(\frac{1}{J^{-4}+v^2}\right)^\frac{1}{2} = \left(\frac{J^4}{1+J^4v^2}\right)^\frac{1}{2}, 
\end{align*} 
we obatin
\begin{align*}
|R_2'(\xi_x)| =& \left|-i \int_{-\frac{1}{k(J+k)}}^{\frac{1}{k(J+k)}}  e^{2\pi g_{j,N}(n)\left(-J^{-2}-iv\right)} e^{\frac{2\pi}{k^2\left(-J^{-2}-iv\right)}\left(\frac{1}{24}-N\xi_x^2\right)} \left(-\frac{4\pi N\xi_x}{k^2\left(-J^{-2}-iv\right)}\right) \, dv \right| \\
\leq & \frac{4\pi N|\xi_x|}{k^2} \int_{-\frac{1}{k(J+k)}}^{\frac{1}{k(J+k)}}  \frac{1}{\left|-J^{-2}-iv\right|} \, dv = \frac{4\pi N|\xi_x|}{k^2} \int_{-\frac{1}{k(J+k)}}^{\frac{1}{k(J+k)}}  \left(\frac{J^4}{1+J^4v^2}\right)^\frac{1}{2} \, dv \\
\leq & \frac{4\pi N|\xi_x|}{k^2} \int_{-\frac{1}{k(J+k)}}^{\frac{1}{k(J+k)}}  \frac{J}{\sqrt{2v}} \, dv = \frac{4\pi N|\xi_x|J}{\sqrt{2}k^2} \int_{-\frac{1}{k(J+k)}}^{\frac{1}{k(J+k)}}  v^{-\frac{1}{2}} \, dv =O_N\left(\frac{J}{k^2\left(k(J+k)\right)^\frac{1}{2}}\right) \\
=& O_N\left(\frac{1}{k^2} \frac{J^\frac{1}{2}}{k^\frac{1}{2}}\right) =O_N\left(\frac{J^\frac{1}{2}}{k^\frac{5}{2}}\right).
\end{align*}
Thus we can simplify \eqref{equation: m=2 integral close to pole} to
\begin{align*}
O_N\left(\sum_{k=1}^J \sum_{r=1}^{\left\lceil \sqrt{\frac{N}{6}}-1\right\rceil}   \frac{nk^{\frac{1}{2}+\varepsilon}}{k} 
 \int_{\frac{r}{2N}-\delta}^{\frac{r}{2N}+\delta} k \frac{J^\frac{1}{2}}{k^\frac{5}{2}}\, dx\right) =& O_N\left(n J^\frac{1}{2} \sum_{k=1}^J    k^{-2+\varepsilon} \delta\right)  =  O_N\left(n J^{-\frac{1}{4}+\varepsilon} \right),
\end{align*}
where we used that $\delta\leq kJ^{-\frac{3}{4}}$.

Thus we overall see that 
\begin{footnotesize}
	\begin{align*}
	E_1 =& \sum_{k=1}^J \sum_{r=1}^{\left\lceil \sqrt{\frac{N}{6}}-1\right\rceil}   \frac{K_{k,j,N}(n,r,0)}{k}\left( R_1\left(\frac{r}{2N}\right) +  R_2\left(\frac{r}{2N}\right)+  R_3\left(\frac{r}{2N}\right)\right) \mathrm{P.V.} 
	\int_{-\sqrt{\frac{1}{24N}}}^{\sqrt{\frac{1}{24N}}} \cot\left(\pi \lp -\frac xk + \frac{r}{2Nk} \rp \right)   \, dx\\
	&+ O_N\left( n e^{2\pi g_{j,N}(n)J^{-2} }  J^{-\frac{1}{4}+\varepsilon}  \right).
\end{align*}
\end{footnotesize}

Substituting $y= \frac{r}{2N}-x$ yields that the principal value integral equals
\begin{small}
	\begin{align*}
	   \mathrm{P.V.} 
	 \int_{-\sqrt{\frac{1}{24N}}}^{\sqrt{\frac{1}{24N}}}& \cot\left(\frac{\pi}{k} \lp -x + \frac{r}{2N} \rp \right)   \, dx = \mathrm{P.V.} 
	 \int_{\frac{r}{2N}-\sqrt{\frac{1}{24N}}}^{\frac{r}{2N}+\sqrt{\frac{1}{24N}}} \cot\left(\frac{\pi}{k} y \right)   \, dy \\
	 =& \mathrm{P.V.} 
	 \int_{\frac{r}{2N}-\sqrt{\frac{1}{24N}}}^{\sqrt{\frac{1}{24N}}-\frac{r}{2N}} \cot\left(\frac{\pi}{k} y \right)   \, dy + 
	 \int_{\sqrt{\frac{1}{24N}}-\frac{r}{2N}}^{\frac{r}{2N}+\sqrt{\frac{1}{24N}}} \cot\left(\frac{\pi}{k} y \right)   \, dy .
\end{align*}
\end{small}
\hspace{-0.25cm} We obtain that the leftover principal value integral equals zero, since we have an odd function and a symmetric interval, and by using the Taylor expansion of $\cot(z)$ we see that
\begin{align*}
	& \int_{\sqrt{\frac{1}{24N}}-\frac{r}{2N}}^{\sqrt{\frac{1}{24N}}+\frac{r}{2N}} \cot\left(\frac{\pi}{k} y \right)  \, dy 
	= O_N\left(\int_{\sqrt{\frac{1}{24N}}-\frac{r}{2N}}^{\sqrt{\frac{1}{24N}}+\frac{r}{2N}} k  \, dy\right) 
	= O_N\left(k\right).
\end{align*}
Additionally we obtain, using \eqref{bound: other sides of the rectangle}, that
\begin{small}
	\begin{align*}
	R_1\left(\frac{r}{2N}\right) +  R_2\left(\frac{r}{2N}\right)+  R_3\left(\frac{r}{2N}\right) = O\left(k^{-1}J^{-1}e^{2\pi g_{j,N}(n)J^{-2}+8\pi\left(\frac{1}{24}-\frac{r^2}{4N}\right)}\right) = O_N\left(k^{-1}J^{-1}e^{2\pi g_{j,N}(n)J^{-2}}\right),
\end{align*}
\end{small}
\hspace{-0.25cm} which yields 
\begin{small}
	\begin{align*}
	&\sum_{k=1}^J \sum_{r=1}^{\left\lceil \sqrt{\frac{N}{6}}-1\right\rceil}   \frac{K_{k,j,N}(n,r,0)}{k}\left( R_1\left(\frac{r}{2N}\right) +  R_2\left(\frac{r}{2N}\right)+  R_3\left(\frac{r}{2N}\right)\right) \mathrm{P.V.} 
	\int_{-\sqrt{\frac{1}{24N}}}^{\sqrt{\frac{1}{24N}}} \cot\left(\pi \lp -\frac xk + \frac{r}{2Nk} \rp \right)   \, dx \\
	=& O_N\left(\sum_{k=1}^J \sum_{r=1}^{\left\lceil \sqrt{\frac{N}{6}}-1\right\rceil}   \frac{nk^{\frac{1}{2}+\varepsilon}}{k}  k^{-1}J^{-1}e^{2\pi g_{j,N}(n)J^{-2}}  k \right)  = O_N\left(n e^{2\pi g_{j,N}(n)J^{-2}} J^{-\frac{1}{2}+\varepsilon}    \right)
\end{align*}
\end{small}
\hspace{-0.25cm} using \eqref{bound: Kloosterman sum}.
Overall we therefore showed that
\begin{align}\label{bound: E_1}
E_1 = O_N\left(   n  e^{2\pi g_{j,N}(n)J^{-2} }   J^{-\frac{1}{4}+\varepsilon} \right).
\end{align}

Combining \eqref{bound: E_3}, \eqref{bound: E_2}, and \eqref{bound: E_1} finally gives 
\begin{small}
	\begin{align}\label{equation: final result a_{I^*,0}}
		a_{\CI^*, j,N,0}(n)
		=& - \frac{2\pi i }{\sqrt{g_{j,N}(n)}} \sum_{k=1}^J \sum_{r=1}^{N-1} \sum_{\kappa=0}^{k-1}  \frac{K_{k,j,N}(n,r,\kappa)}{k^2} \notag \\
		& \times  \mathrm{P.V.} 
		\int_{-\sqrt{\frac{1}{24N}}}^{\sqrt{\frac{1}{24N}}} \sqrt{\frac{1}{24}-Nx^2} \cot\left(\pi \lp -\frac xk+ \frac{\kappa}{k} + \frac{r}{2Nk} \rp \right)   I_1\left(\frac{4\pi\sqrt{g_{j,N}(n)}}{k} \sqrt{\frac{1}{24}-Nx^2}\right)    \, dx \notag\\
		&+O_N\left( n e^{2\pi g_{j,N}(n)J^{-2} }   \max\left(J^{-\frac 12+\varepsilon}  \log(J), J^{-\frac{1}{4}+\varepsilon} \right) \right).
	\end{align}
\end{small}

\subsection{Error part}

It is left to show that all the other parts of $a_{j,N}(n)$ are relatively small compared to $a_{\CI^*, j,N,0}(n)$.
Therefore we go on by analyzing $a_{\CI^*, j,N,1}(n)$ and $a_{\CI^*, j,N,2}(n)$, where we only discuss $a_{\CI^*, j,N,1}(n)$ in detail, since $a_{\CI^*, j,N,2}(n)$ can be treated accordingly. 

Remembering the definitions from above we have that
\begin{align*}
	a_{\CI^*, j,N,1}(n)=& \sum_{k=1}^J \sum_{\substack{0\leq h < k \\ \gcd(h,k)=1 }} e^{-\frac{2\pi ih}{k}g_{j,N}(n)} \int_{-\frac{1}{k(k_1+k)}}^{-\frac{1}{k(J+k)}} e^{2\pi g_{j,N}(n)\omega} \sum_{r=1}^{N-1} \chi_{j,r} (N,M_{h,k}) 
	\zeta_{24k}^{-h'}  	 
	\frac{i}{\pi}  e^{\frac{2 \pi}{24 k^2\omega}}  \\
	& \quad \quad \quad \quad \quad \quad \quad \quad \quad \quad \quad \quad\times \sum_{\kappa \in \Z} e^{2 \pi i N \left(\kappa+\frac{r}{2N}\right)^2\frac{h'}{k} } \, \mathrm{P.V.} 
	\int_{-\sqrt{\frac{1}{24N}}}^{\sqrt{\frac{1}{24N}}} \frac{e^{-2 \pi N \frac{1}{k^2\omega} x^2}}{x-\lp \kappa + \frac{r}{2N} \rp } \, dx \,   d\phi.
\end{align*}
Following \cite{rademacher1938fourier}, using \eqref{equation: intervals k_1 and k_2}, and splitting the integral over $\phi$ into integrals running over segments $[-\frac{1}{k\ell},-\frac{1}{k(\ell+1)}]$, for $k_1+k\leq\ell\leq J+k-1$, it follows that
\begin{align*}
	a_{\CI^*, j,N,1}(n)=& \sum_{k=1}^J \sum_{\substack{0\leq h < k \\ \gcd(h,k)=1 }} e^{-\frac{2\pi ih}{k}g_{j,N}(n)} \sum_{\ell=k_1+k}^{J+k-1} \int_{-\frac{1}{k\ell}}^{-\frac{1}{k(\ell+1)}} e^{2\pi g_{j,N}(n)\omega} \sum_{r=1}^{N-1} \chi_{j,r} (N,M_{h,k}) 
	\zeta_{24k}^{-h'}  	 
	\frac{i}{\pi}  e^{\frac{2 \pi}{24 k^2\omega}}  \\
	& \quad \quad \quad \quad \quad \quad \quad \quad \quad \quad \quad \quad\times \sum_{\kappa \in \Z} e^{2 \pi i N \left(\kappa+\frac{r}{2N}\right)^2\frac{h'}{k} } \, \mathrm{P.V.} 
	\int_{-\sqrt{\frac{1}{24N}}}^{\sqrt{\frac{1}{24N}}} \frac{e^{-2 \pi N \frac{1}{k^2\omega} x^2}}{x-\lp \kappa + \frac{r}{2N} \rp } \, dx \,   d\phi \\
	=& \frac{i}{\pi} \sum_{k=1}^J \sum_{\ell=J+1}^{J+k-1} \int_{-\frac{1}{k\ell}}^{-\frac{1}{k(\ell+1)}}   e^{2\pi g_{j,N}(n)\omega} e^{\frac{2 \pi}{24 k^2\omega}} \sum_{r=1}^{N-1}  	 
	   \sum_{\kappa \in \Z}  \, \mathrm{P.V.} 
	  \int_{-\sqrt{\frac{1}{24N}}}^{\sqrt{\frac{1}{24N}}} \frac{e^{-2 \pi N \frac{1}{k^2\omega} x^2}}{x-\lp \kappa + \frac{r}{2N} \rp } \, dx \\
	& \quad \quad \quad \quad \quad \quad \quad \quad\times  \sum_{\substack{0\leq h < k \\ \gcd(h,k)=1 \\ J<k_1+k\leq \ell }} e^{-\frac{2\pi ih}{k}g_{j,N}(n)} \chi_{j,r} (N,M_{h,k}) \zeta_{24k}^{-h'}  e^{2 \pi i N \left(\kappa+\frac{r}{2N}\right)^2\frac{h'}{k} }
	  \,   d\phi.
\end{align*}

The sum
\begin{align*}
K^*_{k,j,N}(n,r,\kappa,\ell)\coloneqq \sum_{\substack{0\leq h < k \\ \gcd(h,k)=1 \\ J<k_1+k\leq \ell }} \chi_{j,r} (N,M_{h,k})  \zeta_{24k}^{\left(24N \left(\kappa +\frac{r}{2N}\right)^2-1\right)h' -24g_{j,N}(n)h}
\end{align*}
is again well defined for $hh'\equiv -1 \pmod{k}$ and of modulus $k$. For $\mu \in \Z$ we observe that $K^*_{k,j,N}(n,r,\kappa+\mu k,\ell)=K^*_{k,j,N}(n,r,\kappa,\ell)$, which, shifting $\kappa \to \kappa+\mu k$, gives us 
\begin{align*}
a_{\CI^*, j,N,1}(n) =& \frac{i}{\pi} \sum_{k=1}^J \sum_{\ell=J+1}^{J+k-1}  \sum_{r=1}^{N-1} \sum_{\kappa=0}^{k-1} K^*_{k,j,N}(n,r,\kappa,\ell)   \\
& \times  \int_{-\frac{1}{k\ell}}^{-\frac{1}{k(\ell+1)}}   e^{2\pi g_{j,N}(n)\omega} e^{\frac{2 \pi}{24 k^2\omega}} 	 
\lim_{L\to\infty}\sum_{\mu =-L}^{L}  \, \mathrm{P.V.} 
\int_{-\sqrt{\frac{1}{24N}}}^{\sqrt{\frac{1}{24N}}} \frac{e^{-2 \pi N \frac{1}{k^2\omega} x^2}}{x-\lp \kappa+\mu k + \frac{r}{2N} \rp } \, dx \,   d\phi.
\end{align*}
Completely analogously to the calculations of $a_{\CI^*, j,N,0}(n)$ we obtain 
\begin{align*}
a_{\CI^*, j,N,1}(n) =& -i \sum_{k=1}^J \sum_{\ell=J+1}^{J+k-1} \sum_{r=1}^{N-1} \sum_{\kappa=0}^{k-1} \frac{K^*_{k,j,N}(n,r,\kappa,\ell)}{k} \, \mathrm{P.V.} 
\int_{-\sqrt{\frac{1}{24N}}}^{\sqrt{\frac{1}{24N}}} \cot\left(\pi \lp -\frac xk+ \frac{\kappa}{k} + \frac{r}{2Nk} \rp \right)\\
&\times  \int_{-\frac{1}{k\ell}}^{-\frac{1}{k(\ell+1)}}   e^{2\pi \left(g_{j,N}(n)\omega+\frac{1}{24 k^2\omega}-\frac{Nx^2}{k^2\omega}\right)} \, d\phi   \, dx .
\end{align*}

Since $\omega = J^{-2}-i\phi$ we obtain 
\begin{align*}
 \int_{-\frac{1}{k\ell}}^{-\frac{1}{k(\ell+1)}}   e^{2\pi \left(g_{j,N}(n)\omega+\frac{1}{24 k^2\omega}-\frac{Nx^2}{k^2\omega}\right)} \, d\phi = -i \int_{J^{-2}+\frac{i}{k(\ell+1)}}^{J^{-2}+\frac{i}{k\ell}}   e^{2\pi \left(g_{j,N}(n)\omega+\left(\frac{1}{24}-Nx^2\right)\frac{1}{k^2\omega}\right)} \, d\omega
\end{align*}
and note that for $v\coloneqq -\phi$
\begin{small}
	\begin{align*}
\operatorname{Re}\left(2\pi \left(g_{j,N}(n)\omega+\left(\frac{1}{24}-Nx^2\right)\frac{1}{k^2\omega}\right)\right) 
=& 2\pi \left(g_{j,N}(n)J^{-2}+\left(\frac{1}{24}-Nx^2\right)\frac{J^{-2}}{k^2\left(J^{-4}+v^2\right)}\right).
\end{align*}
\end{small}
\hspace{-0.25cm} Summing over all $\ell$ we see that $\frac{1}{k(J+k)}\leq v \leq \frac{1}{k(J+1)}$ and thus
\begin{align*}
	\frac{J^{-2}}{k^2\left(J^{-4}+v^2\right)} \leq \frac{J^{-2}}{k^2\left(J^{-4}+\left(\frac{1}{k(J+k)}\right)^2\right)} = \frac{(J+k)^2J^{-2}}{(J+k)^2k^2J^{-4}+1} \leq (J+k)^2 J^{-2} = \left(\frac{J+k}{J}\right)^2 <4.
\end{align*}
This gives that
\begin{align*}
	\operatorname{Re}\left(2\pi \left(g_{j,N}(n)\omega+\left(\frac{1}{24}-Nx^2\right)\frac{1}{k^2\omega}\right)\right) \leq 2\pi g_{j,N}(n)J^{-2}+8\pi\left(\frac{1}{24}-Nx^2\right)
\end{align*}
and therefore
\begin{small}
	\begin{align}\label{bound: Hl}
 H_\ell(x) \coloneqq \int_{J^{-2}+\frac{i}{k(\ell+1)}}^{J^{-2}+\frac{i}{k\ell}}   e^{2\pi \left(g_{j,N}(n)\omega+\left(\frac{1}{24}-Nx^2\right)\frac{1}{k^2\omega}\right)} \, d\omega &=  O_N \left( \left(\frac{1}{k\ell}-\frac{1}{k(\ell+1)}\right) e^{2\pi g_{j,N}(n)J^{-2}}\right).
\end{align}
\end{small}

Splitting $a_{\CI^*, j,N,1}(n)$ analogously to $E$ in the case $a_{\CI^*, j,N,0}(n)$ gives 
\begin{align*}
a_{\CI^*, j,N,1}(n) =& E^*_1 + E^*_2 + E^*_3 \\
=& - \sum_{k=1}^J \sum_{\ell=J+1}^{J+k-1} \sum_{r=1}^{\left\lceil \sqrt{\frac{N}{6}}-1\right\rceil}  \frac{ K^*_{k,j,N}(n,r,0,\ell)}{k} \, \mathrm{P.V.} 
\int_{-\sqrt{\frac{1}{24N}}}^{\sqrt{\frac{1}{24N}}} \cot\left(\pi \lp -\frac xk+ \frac{r}{2Nk} \rp \right) H_\ell(x)   \, dx \\
& - \sum_{k=1}^J \sum_{\ell=J+1}^{J+k-1} \sum_{r=\left\lceil \sqrt{\frac{N}{6}}\right\rceil}^{N-1}  \frac{ K^*_{k,j,N}(n,r,0,\ell)}{k} 
\int_{-\sqrt{\frac{1}{24N}}}^{\sqrt{\frac{1}{24N}}} \cot\left(\pi \lp -\frac xk+ \frac{r}{2Nk} \rp \right) H_\ell(x)   \, dx \\
& - \sum_{k=1}^J \sum_{\ell=J+1}^{J+k-1} \sum_{r=1}^{N-1} \sum_{\kappa=1}^{k-1} \frac{ K^*_{k,j,N}(n,r,\kappa,\ell)}{k} 
\int_{-\sqrt{\frac{1}{24N}}}^{\sqrt{\frac{1}{24N}}} \cot\left(\pi \lp -\frac xk+ \frac{\kappa}{k} + \frac{r}{2Nk} \rp \right) H_\ell(x)   \, dx.
\end{align*}

Following \cite[page 94]{estermann1929kloosterman} we set $h_0\coloneqq \ell -J$, which gives that $1\leq h_0 <k$. Additionally we define
\begin{align*}
g_1(m) \coloneqq \begin{cases}
1 & \text{ if } 0<m\leq h_0,\\
0 & \text{ if } h_0 < m \leq k,
\end{cases}
\end{align*}
and 
\begin{align*}
g_1(m+k)=g_1(m)
\end{align*}
for all integers $m$. Using this setting together with \eqref{equation: equivalences for k_1 and k_2} we obtain that 
$$ \delta_{J<k_1+k\leq \ell} = \delta_{0<k_1+k-J\leq h_0} = g_1(k_1+k-J) = g_1(k_1-J) = g_1(-h'-J) \eqqcolon \delta_{\sigma_1\leq h' < \sigma_2}, $$
for some $ 0 \leq \sigma_1 < \sigma_2\leq  k$.
This yields that the extra restriction on $k_1$ in the sum  $K^*_{k,j,N}(n,r,\kappa,\ell)$ constrains the choice of $h'$ to an interval mod $k$.
Therefore $K^*_{k,j,N}(n,r,\kappa,\ell)$ is an incomplete Kloosterman sum and can be bounded by \eqref{bound: Kloosterman sum} following the techniques by Lehner \cite[Section 10]{lehner1941partition}. We thus obtain that 
\begin{footnotesize}
	\begin{align*}
E^*_3 =& O_N\left(n e^{2\pi g_{j,N}(n)J^{-2}} \sum_{k=1}^J \sum_{\ell=J+1}^{J+k-1} \sum_{r=1}^{N-1} \sum_{\kappa=1}^{k-1} k^{-\frac{1}{2}+\varepsilon} \left(\frac{1}{k\ell}-\frac{1}{k(\ell+1)}\right) 
\int_{-\sqrt{\frac{1}{24N}}}^{\sqrt{\frac{1}{24N}}} \left|\cot\left(\pi \lp -\frac xk+ \frac{\kappa}{k} + \frac{r}{2Nk} \rp \right)\right|  \, dx\right) \\
=&  O_N\left(n e^{2\pi g_{j,N}(n)J^{-2}} \sum_{k=1}^J  \sum_{r=1}^{N-1} \sum_{\kappa=1}^{k-1} k^{-\frac{1}{2}+\varepsilon} \left(\frac{1}{k(J+1)}-\frac{1}{k(J+k)}\right) 
\int_{-\sqrt{\frac{1}{24N}}}^{\sqrt{\frac{1}{24N}}} \left|\cot\left(\pi \lp -\frac xk+ \frac{\kappa}{k} + \frac{r}{2Nk} \rp \right)\right|  \, dx\right) \\
=&  O_N\left(\frac{n}{J} e^{2\pi g_{j,N}(n)J^{-2}} \sum_{k=1}^J  \sum_{r=1}^{N-1} \sum_{\kappa=1}^{k-1} k^{-\frac{3}{2}+\varepsilon} 
\int_{-\sqrt{\frac{1}{24N}}}^{\sqrt{\frac{1}{24N}}} \left|\cot\left(\pi \lp -\frac xk+ \frac{\kappa}{k} + \frac{r}{2Nk} \rp \right)\right|  \, dx\right) \\
=& O_N\left(n e^{2\pi g_{j,N}(n)J^{-2}} J^{-\frac{1}{2}+\varepsilon} \log(J) \right) ,
\end{align*}
\end{footnotesize}
\hspace{-0.25cm} as seen before. Similar to that we redo the calculations for bounding $E_2$ to prove that
\begin{align*}
E^*_2 = O_N\left(n e^{2\pi g_{j,N}(n)J^{-2}} J^{-\frac{1}{2}+\varepsilon}  \right).
\end{align*}
Lastly we take care of $E^*_1$. We rewrite 
\begin{small}
	\begin{align*}
E^*_1 =&  - \sum_{k=1}^J \sum_{\ell=J+1}^{J+k-1} \sum_{r=1}^{\left\lceil \sqrt{\frac{N}{6}}-1\right\rceil}  \frac{ K^*_{k,j,N}(n,r,0,\ell)}{k} 
\int_{-\sqrt{\frac{1}{24N}}}^{\sqrt{\frac{1}{24N}}} \cot\left(\pi \lp -\frac xk+ \frac{r}{2Nk} \rp \right) \left(H_\ell(x) - H_\ell\left(\frac{r}{2N}\right)\right)  \, dx \\
&  - \sum_{k=1}^J \sum_{\ell=J+1}^{J+k-1} \sum_{r=1}^{\left\lceil \sqrt{\frac{N}{6}}-1\right\rceil}  \frac{K^*_{k,j,N}(n,r,0,\ell)}{k}  H_\ell\left(\frac{r}{2N}\right) \, \mathrm{P.V.} 
\int_{-\sqrt{\frac{1}{24N}}}^{\sqrt{\frac{1}{24N}}} \cot\left(\pi \lp -\frac xk+ \frac{r}{2Nk} \rp \right)    \, dx .
\end{align*}
\end{small}
\hspace{-0.25cm} Using $\delta=\min(kJ^{-\frac{3}{4}},|\sqrt{\frac{1}{24N}}-\frac{r}{2N}|)$ as before and splitting our integral yields
\begin{small}
	\begin{align*}
	E^*_1 =&  - \sum_{k=1}^J \sum_{\ell=J+1}^{J+k-1} \sum_{r=1}^{\left\lceil \sqrt{\frac{N}{6}}-1\right\rceil}  \frac{K^*_{k,j,N}(n,r,0,\ell)}{k} \\
	& \times \left(\int_{-\sqrt{\frac{1}{24N}}}^{\frac{r}{2N}-\delta} + \int_{\frac{r}{2N}+\delta}^{\sqrt{\frac{1}{24N}}} + \int_{\frac{r}{2N}-\delta}^{\frac{r}{2N}+\delta}\right) \cot\left(\pi \lp -\frac xk+ \frac{r}{2Nk} \rp \right) \left(H_\ell(x) - H_\ell\left(\frac{r}{2N}\right)\right)  \, dx \\
	&  - \sum_{k=1}^J \sum_{\ell=J+1}^{J+k-1} \sum_{r=1}^{\left\lceil \sqrt{\frac{N}{6}}-1\right\rceil}  \frac{ K^*_{k,j,N}(n,r,0,\ell)}{k}  H_\ell\left(\frac{r}{2N}\right) \, \mathrm{P.V.} 
	\int_{-\sqrt{\frac{1}{24N}}}^{\sqrt{\frac{1}{24N}}} \cot\left(\pi \lp -\frac xk+ \frac{r}{2Nk} \rp \right)    \, dx .
	\end{align*}
\end{small}
\hspace{-0.25cm} Using \eqref{bound: Kloosterman sum} and \eqref{bound: Hl} we see that 
\begin{footnotesize}
	\begin{align*}
	&- \sum_{k=1}^J \sum_{\ell=J+1}^{J+k-1} \sum_{r=1}^{\left\lceil \sqrt{\frac{N}{6}}-1\right\rceil}  \frac{K^*_{k,j,N}(n,r,0,\ell)}{k}  \left(\int_{-\sqrt{\frac{1}{24N}}}^{\frac{r}{2N}-\delta} + \int_{\frac{r}{2N}+\delta}^{\sqrt{\frac{1}{24N}}} \right) \cot\left(\pi \lp -\frac xk+ \frac{r}{2Nk} \rp \right) \left(H_\ell(x) - H_\ell\left(\frac{r}{2N}\right)\right)  \, dx \\
	=&  O_N\left(\sum_{k=1}^J \sum_{\ell=J+1}^{J+k-1} \sum_{r=1}^{\left\lceil \sqrt{\frac{N}{6}}-1\right\rceil}  \frac{nk^{\frac{1}{2}+\varepsilon}}{k}  \left(\int_{-\sqrt{\frac{1}{24N}}}^{\frac{r}{2N}-\delta} + \int_{\frac{r}{2N}+\delta}^{\sqrt{\frac{1}{24N}}} \right) \frac{k}{\delta} \left(\frac{1}{k\ell}-\frac{1}{k(\ell+1)}\right) e^{2\pi g_{j,N}(n)J^{-2}}  \, dx \right) \\
	=&  O_N\left(\frac{n}{J} e^{2\pi g_{j,N}(n)J^{-2}} \sum_{k=1}^J \sum_{r=1}^{\left\lceil \sqrt{\frac{N}{6}}-1\right\rceil}  k^{-\frac{1}{2}+\varepsilon}\delta^{-1}  \left(\int_{-\sqrt{\frac{1}{24N}}}^{\frac{r}{2N}-\delta} + \int_{\frac{r}{2N}+\delta}^{\sqrt{\frac{1}{24N}}} \right)     \, dx \right) = O_N\left(n e^{2\pi g_{j,N}(n)J^{-2}}  J^{-\frac{1}{4}} \right),
	\end{align*}
\end{footnotesize}
\hspace{-0.25cm} as before. 

Defining $h_\ell(x)\coloneqq H_\ell(x)-H_\ell(\frac{r}{2N})$ and using the Taylor expansion we obtain that
\begin{align*}
h_\ell(x) = h_\ell'\left(\xi_x\right)\left(x-\frac{r}{2N}\right),
\end{align*}
for some $\xi_x$ between $\frac{r}{2N}$ and $x$ and see that $\left|h_\ell'\left(\xi_x\right)\right|  = \left|H_\ell'\left(\xi_x\right)\right|$ as before.
For the integral close to $\frac{r}{2N}$ we first note that 
\begin{align*}
	\sum_{\ell=J+1}^{J+k-1} |H_\ell'(\xi_x)| =& O_N\left(\frac{J^\frac{1}{2}}{k^\frac{5}{2}} e^{2\pi g_{j,N}(n)J^{-2}}\right),
\end{align*}
using the techniques from before. Therefore we obtain
\begin{align*}
- \sum_{k=1}^J \sum_{\ell=J+1}^{J+k-1} &\sum_{r=1}^{\left\lceil \sqrt{\frac{N}{6}}-1\right\rceil}  \frac{ K^*_{k,j,N}(n,r,0,\ell)}{k} \int_{\frac{r}{2N}-\delta}^{\frac{r}{2N}+\delta}    \cot\left(\pi \lp -\frac xk+ \frac{r}{2Nk} \rp \right) \left(H_\ell(x) - H_\ell\left(\frac{r}{2N}\right)\right)  \, dx \\
=& O_N \left(\sum_{k=1}^J \sum_{\ell=J+1}^{J+k-1} \sum_{r=1}^{\left\lceil \sqrt{\frac{N}{6}}-1\right\rceil}   n k^{\frac{1}{2}+\varepsilon} \int_{\frac{r}{2N}-\delta}^{\frac{r}{2N}+\delta}    \left| H'_\ell (\xi_x)\right|  \, dx\right)= O_N\left(n e^{2\pi g_{j,N}(n)J^{-2}} J^{-\frac{1}{4}+\varepsilon} \right),
\end{align*}
which yields
\begin{small}
	\begin{align*}
	E^*_1 =&   - \sum_{k=1}^J \sum_{\ell=J+1}^{J+k-1} \sum_{r=1}^{\left\lceil \sqrt{\frac{N}{6}}-1\right\rceil}  \frac{ K^*_{k,j,N}(n,r,0,\ell)}{k}  H_\ell\left(\frac{r}{2N}\right) \, \mathrm{P.V.} 
	\int_{-\sqrt{\frac{1}{24N}}}^{\sqrt{\frac{1}{24N}}} \cot\left(\pi \lp -\frac xk+ \frac{r}{2Nk} \rp \right)    \, dx \\
	& + O_N\left(n e^{2\pi g_{j,N}(n)J^{-2}} J^{-\frac{1}{4}+\varepsilon} \right).
	\end{align*}
\end{small}
\hspace{-0.25cm} As seen before we have 
$$ \mathrm{P.V.} 
\int_{-\sqrt{\frac{1}{24N}}}^{\sqrt{\frac{1}{24N}}} \cot\left(\pi \lp -\frac xk+ \frac{r}{2Nk} \rp \right)    \, dx = O_N(k).$$
Additionally we obtain
\begin{align*}
\sum_{\ell=J+1}^{J+k-1}  H_\ell\left(\frac{r}{2N}\right) =& O_N\left(k^{-1}J^{-1} e^{2\pi g_{j,N}(n)J^{-2}} \right),
\end{align*}
using \eqref{bound: Hl}.
Analogously to above this yields
\begin{align*}
- \sum_{k=1}^J \sum_{\ell=J+1}^{J+k-1} \sum_{r=1}^{\left\lceil \sqrt{\frac{N}{6}}-1\right\rceil} & \frac{K^*_{k,j,N}(n,r,0,\ell)}{k}  H_\ell\left(\frac{r}{2N}\right) \, \mathrm{P.V.} 
\int_{-\sqrt{\frac{1}{24N}}}^{\sqrt{\frac{1}{24N}}} \cot\left(\pi \lp -\frac xk+ \frac{r}{2Nk} \rp \right)    \, dx \\
=& O_N\left(n e^{2\pi g_{j,N}(n)J^{-2}} J^{-\frac{1}{2}+\varepsilon} \right)
\end{align*}
and therefore overall
\begin{align*}
E^*_1 = O_N\left(n e^{2\pi g_{j,N}(n)J^{-2}} J^{-\frac{1}{4}+\varepsilon}\right).
\end{align*}

This finally gives 
\begin{align*}
a_{\CI^*, j,N,1}(n) = O_N\left( n e^{2\pi g_{j,N}(n)J^{-2} }   \max\left(J^{-\frac 12+\varepsilon}  \log(J), J^{-\frac{1}{4}+\varepsilon} \right) \right)
\end{align*}
and the analog result for $a_{\CI^*, j,N,2}(n)$.

Plugging \eqref{equation: final result a_{I^*,0}} and this results into \eqref{equation: splitting a_{I^*,j,N}} yields that
\begin{small}
	\begin{align}\label{equation: final result a_{I^*}}
	a_{\CI^*, j,N}(n)
	=& - \frac{2\pi i }{\sqrt{g_{j,N}(n)}} \sum_{k=1}^J \sum_{r=1}^{N-1} \sum_{\kappa=0}^{k-1}  \frac{K_{k,j,N}(n,r,\kappa)}{k^2} \notag \\
	& \times  \mathrm{P.V.} 
	\int_{-\sqrt{\frac{1}{24N}}}^{\sqrt{\frac{1}{24N}}} \sqrt{\frac{1}{24}-Nx^2} \cot\left(\pi \lp -\frac xk+ \frac{\kappa}{k} + \frac{r}{2Nk} \rp \right)   I_1\left(\frac{4\pi\sqrt{g_{j,N}(n)}}{k} \sqrt{\frac{1}{24}-Nx^2}\right)    \, dx \notag\\
	&+O_N\left( n e^{2\pi g_{j,N}(n)J^{-2} }   \max\left(J^{-\frac 12+\varepsilon}  \log(J), J^{-\frac{1}{4}+\varepsilon} \right) \right).
	\end{align}
\end{small}

Lastly we have to take care of $a_{\CI, j,N}(n)$ and $a_{\CI^e, j,N}(n)$.
From the definition and \eqref{representation of I^e} we have 
\begin{align*}
	a_{\CI^e, j,N}(n) =& \sum_{k=1}^J \sum_{\substack{0\leq h < k  \\ \gcd(h,k)=1 }} e^{-\frac{2\pi ih}{k}g_{j,N}(n)} \int_{-\vartheta_{h,k}'}^{\vartheta_{h,k}''} e^{2\pi g_{j,N}(n)\omega} \sum_{r=1}^{N-1} \chi_{j,r} (N,M_{h,k}) 
	\zeta_{24k}^{-h'}  \frac{i}{\pi}  e^{\frac{2 \pi}{24 k^2\omega}}  \\
	& \quad \quad \quad \quad \quad \quad \quad \quad \quad \quad \quad \quad\times \sum_{\kappa \in \Z} e^{2 \pi i N \left(\kappa+\frac{r}{2N}\right)^2\frac{h'}{k} } \,\mathrm{P.V.} 
	\int_{|x|\geq\sqrt{\frac{1}{24N}}} \frac{e^{-2 \pi N \frac{1}{k^2\omega} x^2}}{x-\lp \kappa + \frac{r}{2N} \rp } \, dx \,   d\phi .
\end{align*}
By decomposing the Farey segment $-\vartheta_{h,k}' \leq \phi \leq -\vartheta_{h,k}''$ as seen before we obtain
\begin{align*}
	a_{\CI^e, j,N}(n) =& \sum_{k=1}^J \sum_{\substack{0\leq h < k  \\ \gcd(h,k)=1 }} e^{-\frac{2\pi ih}{k}g_{j,N}(n)} \int_{-\frac{1}{k(J+k)}}^{\frac{1}{k(J+k)}} e^{2\pi g_{j,N}(n)\omega} \sum_{r=1}^{N-1} \chi_{j,r} (N,M_{h,k}) 
	\zeta_{24k}^{-h'}  \frac{i}{\pi}  e^{\frac{2 \pi}{24 k^2\omega}}  \\
	& \quad \quad \quad \quad \quad \quad \quad \quad \quad \quad \quad \quad\times \sum_{\kappa \in \Z} e^{2 \pi i N \left(\kappa+\frac{r}{2N}\right)^2\frac{h'}{k} } \,\mathrm{P.V.} 
	\int_{|x|\geq\sqrt{\frac{1}{24N}}} \frac{e^{-2 \pi N \frac{1}{k^2\omega} x^2}}{x-\lp \kappa + \frac{r}{2N} \rp } \, dx \,   d\phi \\
	&+\sum_{k=1}^J \sum_{\substack{0\leq h < k  \\ \gcd(h,k)=1 }} e^{-\frac{2\pi ih}{k}g_{j,N}(n)} \sum_{\ell=k_1+k}^{J+k-1} \int_{-\frac{1}{k\ell}}^{-\frac{1}{k(\ell+1)}} e^{2\pi g_{j,N}(n)\omega} \sum_{r=1}^{N-1} \chi_{j,r} (N,M_{h,k}) 
	\zeta_{24k}^{-h'}  \frac{i}{\pi}  e^{\frac{2 \pi}{24 k^2\omega}}  \\
	& \quad \quad \quad \quad \quad \quad \quad \quad \quad \quad \quad \quad\times \sum_{\kappa \in \Z} e^{2 \pi i N \left(\kappa+\frac{r}{2N}\right)^2\frac{h'}{k} } \,\mathrm{P.V.} 
	\int_{|x|\geq\sqrt{\frac{1}{24N}}} \frac{e^{-2 \pi N \frac{1}{k^2\omega} x^2}}{x-\lp \kappa + \frac{r}{2N} \rp } \, dx \,   d\phi  \\
	& + \sum_{k=1}^J \sum_{\substack{0\leq h < k  \\ \gcd(h,k)=1 }} e^{-\frac{2\pi ih}{k}g_{j,N}(n)} \sum_{\ell=k_2+k}^{J+k-1} \int_{\frac{1}{k(\ell+1)}}^{\frac{1}{k\ell}}  e^{2\pi g_{j,N}(n)\omega} \sum_{r=1}^{N-1} \chi_{j,r} (N,M_{h,k}) 
	\zeta_{24k}^{-h'}  \frac{i}{\pi}  e^{\frac{2 \pi}{24 k^2\omega}}  \\
	& \quad \quad \quad \quad \quad \quad \quad \quad \quad \quad \quad \quad\times \sum_{\kappa \in \Z} e^{2 \pi i N \left(\kappa+\frac{r}{2N}\right)^2\frac{h'}{k} } \,\mathrm{P.V.} 
	\int_{|x|\geq\sqrt{\frac{1}{24N}}} \frac{e^{-2 \pi N \frac{1}{k^2\omega} x^2}}{x-\lp \kappa + \frac{r}{2N} \rp } \, dx \,   d\phi \\
	\eqqcolon& a_{\CI^e, j,N,1}(n) + a_{\CI^e, j,N,2}(n) + a_{\CI^e, j,N,3}(n).
\end{align*}
We first note that 
\begin{align*}
	a_{\CI^e, j,N,1}(n) =& \frac{i}{\pi} \sum_{k=1}^J \sum_{r=1}^{N-1} \sum_{\kappa=0}^{k-1} K_{k,j,N}(n,r,\kappa) \\
	&\times \int_{-\frac{1}{k(J+k)}}^{\frac{1}{k(J+k)}} e^{2\pi g_{j,N}(n)\omega} \lim_{L\to\infty} \sum_{\mu=-L}^L   \, \mathrm{P.V.} 
	\int_{|x|\geq\sqrt{\frac{1}{24N}}} \frac{e^{-2 \pi \left(Nx^2-\frac{1}{24}\right) \frac{1}{k^2\omega} }}{x-\lp \kappa + \mu k + \frac{r}{2N} \rp } \, dx  \, d\phi
\end{align*}
completely analogously to the calculations of $a_{\CI^*, j,N,0}(n)$, while
\begin{align*}
a_{\CI^e, j,N,2}(n)  =& \frac{i}{\pi} \sum_{k=1}^J \sum_{\ell=J+1}^{J+k-1}  \sum_{r=1}^{N-1} \sum_{\kappa=0}^{k-1} K^*_{k,j,N}(n,r,\kappa,\ell)   \\
& \times  \int_{-\frac{1}{k\ell}}^{-\frac{1}{k(\ell+1)}}   e^{2\pi g_{j,N}(n)\omega}	 
\lim_{L\to\infty}\sum_{\mu =-L}^{L}  \, \mathrm{P.V.} 
\int_{|x|\geq\sqrt{\frac{1}{24N}}} \frac{e^{-2 \pi \left(Nx^2-\frac{1}{24}\right) \frac{1}{k^2\omega} }}{x-\lp \kappa+\mu k + \frac{r}{2N} \rp } \, dx \,   d\phi
\end{align*}
and 
\begin{align*}
a_{\CI^e, j,N,3}(n) =& \frac{i}{\pi} \sum_{k=1}^J \sum_{\ell=J+1}^{J+k-1}  \sum_{r=1}^{N-1} \sum_{\kappa=0}^{k-1} \widetilde{K}_{k,j,N}(n,r,\kappa,\ell)   \\
& \times  \int_{\frac{1}{k(\ell+1)}}^{\frac{1}{k\ell}}   e^{2\pi g_{j,N}(n)\omega}	 
\lim_{L\to\infty}\sum_{\mu =-L}^{L}  \, \mathrm{P.V.} 
\int_{|x|\geq\sqrt{\frac{1}{24N}}} \frac{e^{-2 \pi \left(Nx^2-\frac{1}{24}\right) \frac{1}{k^2\omega} }}{x-\lp \kappa+\mu k + \frac{r}{2N} \rp } \, dx \,   d\phi
\end{align*}
analogously to the calculation of $a_{\CI^*, j,N,1}(n)$, where
\begin{align*}
	\widetilde{K}_{k,j,N}(n,r,\kappa,\ell)\coloneqq \sum_{\substack{0\leq h < k \\ \gcd(h,k)=1 \\ J<k_2+k\leq \ell }} \chi_{j,r} (N,M_{h,k})  \zeta_{24k}^{\left(24N \left(\kappa +\frac{r}{2N}\right)^2-1\right)h' -24g_{j,N}(n)h}.
\end{align*}

For $a_{\CI,j,N}(n)$ we have 
\begin{small}
	\begin{align*}
a_{\CI, j,N}(n) =& \sum_{k=1}^J \sum_{\substack{0\leq h < k  \\ \gcd(h,k)=1 }} e^{-\frac{2\pi ih}{k}g_{j,N}(n)} \int_{-\vartheta_{h,k}'}^{\vartheta_{h,k}''} e^{2\pi g_{j,N}(n)\omega} \sum_{r=1}^{N-1} \chi_{j,r} (N,M_{h,k})   \\
& \times \left(\eta\left(\frac{h'}{k}+\frac{i}{k^2\omega}\right)^{-1} -e^{-\frac{\pi i}{12}\left(\frac{h'}{k}+\frac{i}{k^2\omega}\right)}\right) \frac{i}{\pi} \sum_{\kappa \in \Z} e^{2 \pi i N \left(\kappa+\frac{r}{2N}\right)^2\frac{h'}{k} }
\, \mathrm{P.V.}
\int_{-\infty}^\infty \frac{e^{-2 \pi \frac{N}{k^2\omega} x^2}}{x-\lp \kappa + \frac{r}{2N} \rp } \,dx \,   d\phi 
\end{align*}
\end{small}
\hspace{-0.25cm} from the definition and \eqref{representation of I}.
 Additionally we observe that
 \begin{align*}
 \eta\left(\frac{h'}{k}+\frac{i}{k^2\omega}\right)^{-1} -e^{-\frac{\pi i}{12}\left(\frac{h'}{k}+\frac{i}{k^2\omega}\right)} =  \sum_{m\geq 1} p(m) e^{-\frac{2\pi m }{k^2\omega}}  \zeta_{24k}^{(24m-1)h'} e^{\frac{2\pi}{24k^2\omega}},
 \end{align*}
 where $p(m)$ is the partition function.
 
By decomposing the Farey segment $-\vartheta_{h,k}' \leq \phi \leq -\vartheta_{h,k}''$ as seen before we obtain
\begin{scriptsize}
	\begin{align*}
a_{\CI, j,N}(n) =& \sum_{k=1}^J \sum_{\substack{0\leq h < k  \\ \gcd(h,k)=1 }} e^{-\frac{2\pi ih}{k}g_{j,N}(n)}   \int_{-\frac{1}{k(J+k)}}^{\frac{1}{k(J+k)}} \sum_{m\geq 1} p(m) e^{-\frac{2\pi m }{k^2\omega}} e^{2\pi g_{j,N}(n)\omega} \sum_{r=1}^{N-1} \chi_{j,r} (N,M_{h,k})   \zeta_{24k}^{(24m-1)h'}    \\
& \hspace{4cm}\times \frac{i}{\pi} e^{\frac{2\pi}{24k^2\omega}} \sum_{\kappa \in \Z} e^{2 \pi i N \left(\kappa+\frac{r}{2N}\right)^2\frac{h'}{k} }
\, \mathrm{P.V.}
\int_{-\infty}^\infty \frac{e^{-2 \pi \frac{N}{k^2\omega} x^2}}{x-\lp \kappa + \frac{r}{2N} \rp } \,dx \,   d\phi \\
&+ \sum_{k=1}^J \sum_{\substack{0\leq h < k  \\ \gcd(h,k)=1 }} e^{-\frac{2\pi ih}{k}g_{j,N}(n)}   \sum_{\ell=k_1+k}^{J+k-1} \int_{-\frac{1}{k\ell}}^{-\frac{1}{k(\ell+1)}} \sum_{m\geq 1} p(m) e^{-\frac{2\pi m }{k^2\omega}} e^{2\pi g_{j,N}(n)\omega} \sum_{r=1}^{N-1} \chi_{j,r} (N,M_{h,k})   \zeta_{24k}^{(24m-1)h'}    \\
& \hspace{4cm}\times \frac{i}{\pi} e^{\frac{2\pi}{24k^2\omega}} \sum_{\kappa \in \Z} e^{2 \pi i N \left(\kappa+\frac{r}{2N}\right)^2\frac{h'}{k} }
\, \mathrm{P.V.}
\int_{-\infty}^\infty \frac{e^{-2 \pi \frac{N}{k^2\omega} x^2}}{x-\lp \kappa + \frac{r}{2N} \rp } \,dx \,   d\phi \\
&+ \sum_{k=1}^J \sum_{\substack{0\leq h < k  \\ \gcd(h,k)=1 }} e^{-\frac{2\pi ih}{k}g_{j,N}(n)}   \sum_{\ell=k_2+k}^{J+k-1}\int_{\frac{1}{k(\ell+1)}}^{\frac{1}{k\ell}} \sum_{m\geq 1} p(m) e^{-\frac{2\pi m }{k^2\omega}} e^{2\pi g_{j,N}(n)\omega} \sum_{r=1}^{N-1} \chi_{j,r} (N,M_{h,k})   \zeta_{24k}^{(24m-1)h'}   \\
& \hspace{4cm}\times  \frac{i}{\pi} e^{\frac{2\pi}{24k^2\omega}} \sum_{\kappa \in \Z} e^{2 \pi i N \left(\kappa+\frac{r}{2N}\right)^2\frac{h'}{k} }
\, \mathrm{P.V.}
\int_{-\infty}^\infty \frac{e^{-2 \pi \frac{N}{k^2\omega} x^2}}{x-\lp \kappa + \frac{r}{2N} \rp } \,dx \,   d\phi\\
\eqqcolon& a_{\CI, j,N,1}(n) + a_{\CI, j,N,2}(n) + a_{\CI, j,N,3}(n).
\end{align*}
\end{scriptsize}

Define
\begin{align*}
K_{k,j,N}(n,m,r,\kappa)\coloneqq& \sum_{\substack{0\leq h < k \\ \gcd(h,k)=1  }} \chi_{j,r} (N,M_{h,k})  \zeta_{24k}^{\left(24N \left(\kappa +\frac{r}{2N}\right)^2+24m-1\right)h' -24g_{j,N}(n)h}, \\
K^*_{k,j,N}(n,m,r,\kappa,\ell)\coloneqq& \sum_{\substack{0\leq h < k \\ \gcd(h,k)=1 \\ J<k_1+k\leq \ell }} \chi_{j,r} (N,M_{h,k})  \zeta_{24k}^{\left(24N \left(\kappa +\frac{r}{2N}\right)^2+24m-1\right)h' -24g_{j,N}(n)h}, \\
\widetilde{K}_{k,j,N}(n,m,r,\kappa,\ell)\coloneqq& \sum_{\substack{0\leq h < k \\ \gcd(h,k)=1 \\ J<k_2+k\leq \ell }} \chi_{j,r} (N,M_{h,k})  \zeta_{24k}^{\left(24N \left(\kappa +\frac{r}{2N}\right)^2+24m-1\right)h' -24g_{j,N}(n)h},
\end{align*}
which are all well-defined Kloosterman sums of modulus $k$.
We thus note that 
\begin{align*}
a_{\CI, j,N,1}(n) =& \frac{i}{\pi} \sum_{k=1}^J \sum_{r=1}^{N-1} \sum_{\kappa=0}^{k-1}  \int_{-\frac{1}{k(J+k)}}^{\frac{1}{k(J+k)}} \sum_{m\geq 1} p(m) e^{-\frac{2\pi\left( m-\frac{1}{24}\right) }{k^2\omega}} K_{k,j,N}(n,m,r,\kappa) e^{2\pi g_{j,N}(n)\omega} \\
&\times  \lim_{L\to\infty} \sum_{\mu=-L}^L   \, \mathrm{P.V.} 
\int_{-\infty}^\infty \frac{e^{-2 \pi N x^2 \frac{1}{k^2\omega}}}{x-\lp \kappa + \mu k + \frac{r}{2N} \rp } \, dx  \, d\phi
\end{align*}
completely analogously to the calculations of $a_{\CI^*, j,N,0}(n)$, while
\begin{align*}
a_{\CI, j,N,2}(n)  =& \frac{i}{\pi} \sum_{k=1}^J \sum_{\ell=J+1}^{J+k-1}  \sum_{r=1}^{N-1} \sum_{\kappa=0}^{k-1}   \int_{-\frac{1}{k\ell}}^{-\frac{1}{k(\ell+1)}} \sum_{m\geq 1} p(m) e^{-\frac{2\pi \left( m-\frac{1}{24}\right) }{k^2\omega}} K^*_{k,j,N}(n,m,r,\kappa,\ell) e^{2\pi g_{j,N}(n)\omega}	   \\
& \times 
\lim_{L\to\infty}\sum_{\mu =-L}^{L}  \, \mathrm{P.V.} 
\int_{-\infty}^\infty \frac{e^{-2 \pi N x^2 \frac{1}{k^2\omega}}}{x-\lp \kappa+\mu k + \frac{r}{2N} \rp } \, dx \,   d\phi
\end{align*}
and 
\begin{align*}
a_{\CI, j,N,3}(n) =& \frac{i}{\pi} \sum_{k=1}^J \sum_{\ell=J+1}^{J+k-1}  \sum_{r=1}^{N-1} \sum_{\kappa=0}^{k-1}   \int_{\frac{1}{k(\ell+1)}}^{\frac{1}{k\ell}} \sum_{m\geq 1} p(m) e^{-\frac{2\pi\left( m-\frac{1}{24}\right) }{k^2\omega}} \widetilde{K}_{k,j,N}(n,m,r,\kappa,\ell)  e^{2\pi g_{j,N}(n)\omega}	 
  \\
& \times \lim_{L\to\infty}\sum_{\mu =-L}^{L}  \, \mathrm{P.V.} 
\int_{-\infty}^\infty \frac{e^{-2 \pi N x^2 \frac{1}{k^2\omega}}}{x-\lp \kappa+\mu k + \frac{r}{2N} \rp } \, dx \,   d\phi
\end{align*}
analogously to the calculation of $a_{\CI^*, j,N,1}(n)$.

To be able to bound all parts of $a_{\CI^e, j,N}(n) $ resp. $a_{\CI, j,N}(n) $ we need the following lemma.
\begin{lemma}\label{lemma: bound pv integral}
	For $0\leq d <N$, $N,~k,~\omega,~\kappa$, and $r$ as above, and $2\sqrt{Nd}\notin\Z\backslash\{0\}$ we have
	\begin{align*}
		\lim_{L\to\infty}\sum_{\mu =-L}^{L}  \, \mathrm{P.V.} 
	\int_{|x|\geq\sqrt{\frac{d}{N}}} \frac{e^{-2 \pi \left(Nx^2-d\right) \frac{1}{k^2\omega} }}{x-\lp \kappa+\mu k + \frac{r}{2N} \rp } \, dx = O_N\left(\frac{1}{\min\left(\kappa+\frac{r}{2N}, k-\kappa-\frac{r}{2N}\right)}\right),
	\end{align*}
	as $k\to\infty$.
\end{lemma}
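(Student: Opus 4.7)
The strategy is to first replace the sum over $\mu$ by a single cotangent factor via the Mittag-Leffler identity \eqref{equation: cotangens}, and then estimate the resulting integral directly using the Gaussian decay of $g(x) := e^{-2\pi(Nx^2-d)/(k^2\omega)}$. Setting $a := \kappa + \frac{r}{2N}$, one has
\begin{align*}
\lim_{L\to\infty}\sum_{\mu=-L}^{L}\frac{1}{x-a-\mu k} \;=\; \frac{\pi}{k}\cot\!\left(\frac{\pi(x-a)}{k}\right)
\end{align*}
for every $x\notin\{a+\mu k:\mu\in\Z\}$. The first goal is to exchange this limit with the P.V.\ integral to obtain
\begin{align*}
S \;=\; \mathrm{P.V.}\int_{|x|\geq\sqrt{d/N}} g(x)\,\frac{\pi}{k}\cot\!\left(\frac{\pi(x-a)}{k}\right) dx.
\end{align*}
To justify this, I would first note that on the Farey arcs one has $\mathrm{Re}(1/(k^2\omega))\geq \frac{1}{2}$ uniformly in $k\leq J$ and $\omega$, so $|g(x)|\leq e^{-\pi(Nx^2-d)}$ on the integration region. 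For large $|\mu|$ the pole $a+\mu k$ lies at distance of order $|\mu|k$ from the origin, so a routine splitting (P.V.\ piece near the pole controlled by the smoothness of $g$, remainder controlled by $|g|/|x-a-\mu k|$) gives $|I_\mu|=O_N\!\left(e^{-c\mu^2 k^2}\right)$, whence $\sum_\mu I_\mu$ is absolutely convergent.

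The second step is to estimate $S$ directly. Splitting into a core $|x|\leq 2$ and a tail $|x|>2$: in the tail the Gaussian factor $e^{-\pi Nx^2}$ beats any cotangent growth and yields a contribution of $O_N(1/k)$, which is absorbed by $O_N(1/\min(a,k-a))$ since $\min(a,k-a)\leq k/2$. In the core, for $k$ large the only pole of $\cot(\pi(x-a)/k)$ lying in $[-2,2]$ is $x=a$ (and only if $a\leq 2$), and elsewhere one has the uniform bound $\frac{\pi}{k}|\cot(\pi(x-a)/k)|=O(1/\min(a,k-a))$, because the nearest pole is at distance at least a constant multiple of $\min(a,k-a)$. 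Near $x=a$ the Laurent expansion $\frac{\pi}{k}\cot(\pi(x-a)/k)=\frac{1}{x-a}+O(|x-a|/k^2)$ together with the P.V.\ and the smoothness of $g$ yields the same order of contribution. Adding the two pieces gives the stated bound $|S|=O_N(1/\min(\kappa+\frac{r}{2N},\,k-\kappa-\frac{r}{2N}))$.

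The main obstacle is the interchange in the first step: the individual P.V.\ integrals $I_\mu$ are not absolutely integrable termwise, and the Mittag-Leffler series itself is only conditionally convergent. The resolution combines (i) the uniform Gaussian decay of $g$, independent of both $k$ and $\omega$ on the Farey arcs, and (ii) the hypothesis $2\sqrt{Nd}\notin\Z\setminus\{0\}$, which guarantees that no pole $a+\mu k$ lies on the boundary $|x|=\sqrt{d/N}$, so each $I_\mu$ is well-defined. With these two ingredients, pairing $\mu$ with $-\mu$ in the Mittag-Leffler series produces an absolutely convergent pointwise expansion that, together with the Gaussian envelope, permits the exchange via dominated convergence.
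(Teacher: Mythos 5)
Your reduction to the single kernel $\frac{\pi}{k}\cot\bigl(\frac{\pi(x-a)}{k}\bigr)$ is a genuinely different route from the paper's, but the estimation step has a real gap: your treatment of the principal value near the poles is not uniform in $\omega$, and the lemma (as it is used later, inside an integral over $\phi$ with $J\to\infty$) requires a constant depending only on $N$. The only information available about $\omega$ is $\operatorname{Re}\bigl(\frac{1}{k^2\omega}\bigr)\geq\frac12$; the modulus $\bigl|\frac{1}{k^2\omega}\bigr|$ is \emph{not} bounded in terms of $N$ and $k$ — on the arcs it can be as large as $J^2/k^2$ (e.g.\ $\phi=0$ on the central arc), and its imaginary part as large as $J/k$. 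Consequently $g'(x)=-\frac{4\pi Nx}{k^2\omega}g(x)$ has size of order $N|x|\,\bigl|\frac{1}{k^2\omega}\bigr|$, so "the P.V.\ together with the smoothness of $g$" does not yield a contribution of the claimed order: the standard size-plus-derivative optimization near a pole at distance $O(1)$ from the origin gives at best $O_N\bigl(1+\log\bigl(2+\bigl|\tfrac{1}{k^2\omega}\bigr|\bigr)\bigr)=O_N(\log J)$, not $O_N\bigl(1/\min(a,k-a)\bigr)$, and the naive version (derivative bound over a fixed window) gives $O_N\bigl(J^2/k^2\bigr)$, which would destroy the error analysis. The same non-uniformity infects the tail, where poles $a+\mu k$ sit inside the unbounded integration region. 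So as written the argument proves at most a weaker, $J$-dependent bound; it does not prove the stated lemma, and you never confront the fact that $g$ oscillates with unbounded frequency while only its modulus is controlled.

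The missing ingredient is exactly how the paper resolves this: the principal value must be handled by exploiting the phase of $\frac{1}{k^2\omega}$ rather than the smoothness of $g$. After the symmetrization $x\mapsto u=x^2-\frac dN$ and splitting off an elementary piece (which produces the cotangent and the $1/\min$ bound), the paper rotates the ray of integration to $e^{\pm\frac{\pi i}{4}}\R^{+}$, with the sign chosen according to $\sgn\bigl(\operatorname{Im}\frac{1}{\omega}\bigr)$ so that $\operatorname{Re}\bigl(\frac{2N}{k^2\omega}e^{\pm\frac{\pi i}{4}}\bigr)\geq\frac{N}{\sqrt2}$; the residues picked up on the real axis are summed via the modular transformation of $\vartheta_3$ and contribute $O_N(1/k)$, and the rotated integrals are bounded using only the real-part inequality, with the $\mu$-sum handled by the elementary estimate leading to $1/\min\bigl(\kappa+\frac{r}{2N},k-\kappa-\frac{r}{2N}\bigr)$. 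Some version of this contour-shift (or an equivalent oscillatory-integral argument) is indispensable for uniformity in $\omega$, and it is absent from your proposal. A minor additional slip: for $\kappa$ close to $k-1$ the pole $a-k$ lies in your core $[-2,2]$, so "the only pole in $[-2,2]$ is $x=a$" is false; this is fixable by treating $a$ and $a-k$ symmetrically, but it should be stated.
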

\begin{proof}
We follow the ideas of \cite[Proof of Lemma 3.3]{bringmann2019framework}. Combining the integral over negative and positive reals gives us that
\begin{align*}
	 \mathcal{P}_{d,N}(k,\omega,\kappa,r) \coloneqq& \mathrm{P.V.} 
	\int_{|x|\geq\sqrt{\frac{d}{N}}} \frac{e^{-2 \pi \left(Nx^2-d\right) \frac{1}{k^2\omega} }}{x-\lp \kappa+\mu k + \frac{r}{2N} \rp } \, dx  \\
	=&  \mathrm{P.V.} 
	\int_{\sqrt{\frac{d}{N}}}^\infty e^{-2 \pi \left(Nx^2-d\right) \frac{1}{k^2\omega} }\left( \frac{1}{x-\lp \kappa+\mu k + \frac{r}{2N} \rp } -  \frac{1}{x+\lp \kappa+\mu k + \frac{r}{2N} \rp }\right) \, dx \\
	=& 2 \lp \kappa+\mu k + \frac{r}{2N} \rp   \, \mathrm{P.V.} 
	\int_{\sqrt{\frac{d}{N}}}^\infty  \frac{e^{-2 \pi N\left(x^2 -\frac{d}{N}\right)\frac{1}{k^2\omega} }}{x^2-\lp \kappa+\mu k + \frac{r}{2N} \rp^2 } \, dx \\
	=& \lp \kappa+\mu k + \frac{r}{2N} \rp   \, \mathrm{P.V.} 
	\int_{0}^\infty  \frac{e^{-2 \pi Nu \frac{1}{k^2\omega} }}{\sqrt{u+\frac{d}{N}} \left(u+\frac{d}{N}-\lp \kappa+\mu k + \frac{r}{2N} \rp^2\right) } \, du,
\end{align*}
where we substituted $u=x^2-\frac{d}{N}$ in the last step. We go on by writing
\begin{align*}
	\frac{1}{u+\frac{d}{N}-\lp \kappa+\mu k + \frac{r}{2N} \rp^2} =&  \left(\frac{1}{u+\frac{d}{N}-\lp \kappa+\mu k + \frac{r}{2N} \rp^2} + \frac{1}{\lp \kappa+\mu k+\frac{r}{2N} \rp^2}\right) - \frac{1}{\lp \kappa+\mu k+\frac{r}{2N} \rp^2} \\
	=& \frac{u+\frac{d}{N}}{\lp \kappa+\mu k+\frac{r}{2N} \rp^2 \left(u+\frac{d}{N}-\lp \kappa+\mu k + \frac{r}{2N} \rp^2\right)}  - \frac{1}{\lp \kappa+\mu k+\frac{r}{2N} \rp^2}
\end{align*}
and consider the contribution of each term seperately, where we denote them by $ \mathcal{P}_{d,N,1}(k,\omega,\kappa,r)$, resp. $ \mathcal{P}_{d,N,2}(k,\omega,\kappa,r)$. We start by looking at 
\begin{align*}
	\mathcal{P}_{d,N,2}(k,\omega,\kappa,r) =&  - \frac{1}{ \kappa+\mu k+\frac{r}{2N} } 
	\int_{0}^\infty  \frac{e^{-2 \pi Nu \frac{1}{k^2\omega} }}{\sqrt{u+\frac{d}{N}} } \, du.
\end{align*}
Using that $\operatorname{Re}(\frac{2N}{k^2\omega})\geq N$ together with $u\geq 0$ we see that
\begin{align*}
	\left| \int_{0}^\infty  \frac{e^{-2 \pi Nu \frac{1}{k^2\omega} }}{\sqrt{u+\frac{d}{N}} } \, du \right| \leq  \int_{0}^\infty  \frac{e^{-\pi Nu}}{\sqrt{u+\frac{d}{N}} } \, du = O_{N}(1).
\end{align*}
Using \eqref{equation: cotangens} we additionally see that
\begin{align*}
	-\lim_{L\to\infty}\sum_{\mu =-L}^{L} \frac{1}{ \kappa+\mu k+\frac{r}{2N} } = - \frac{1}{k} \lim_{L\to\infty}\sum_{\mu =-L}^{L} \frac{1}{ \mu + \frac{\kappa}{k}+\frac{r}{2Nk} } = -\frac{\pi}{k} \cot\left(\pi\left(\frac{\kappa}{k}+\frac{r}{2Nk}\right)\right).
\end{align*}
Since $0 < \frac{\kappa}{k}+\frac{r}{2Nk} < 1$ we obtain that (see \cite[page 13]{bringmann2019framework})
$$\left|\cot\left(\pi \left(\frac{\kappa}{k}+\frac{r}{2Nk}\right)\right)\right| \ll \frac{1}{\frac{\kappa}{k}+\frac{r}{2Nk}}+\frac{1}{1-\frac{\kappa}{k}-\frac{r}{2Nk}},$$ 
which yields that
\begin{align*}
	\lim_{L\to\infty}\sum_{\mu =-L}^{L} \mathcal{P}_{d,N,2}(k,\omega,\kappa,r) =& O_N\left(\frac{1}{k} \left(\frac{1}{ \frac{\kappa}{k}+\frac{r}{2Nk}} + \frac{1}{1- \frac{\kappa}{k}-\frac{r}{2Nk}}\right) \right) \\
	=& O_N\left(\frac{1}{\min\left(\kappa+\frac{r}{2N}, k-\kappa-\frac{r}{2N}\right)}\right).
\end{align*}

Next we look at $\mathcal{P}_{d,N,1}(k,\omega,\kappa,r)$. We start by writing
\begin{align*}
	 \mathcal{P}_{d,N,1}(k,\omega,\kappa,r) =&  \frac{1}{\kappa+\mu k + \frac{r}{2N}}   \, \mathrm{P.V.} 
	 \int_{0}^\infty  \frac{e^{-2 \pi Nu \frac{1}{k^2\omega} } \sqrt{u+\frac{d}{N}} }{ u+\frac{d}{N}-\lp \kappa+\mu k + \frac{r}{2N} \rp^2} \, du.
\end{align*}
Our poles thus lie in $u=\lp \kappa+\mu k + \frac{r}{2N} \rp^2 -\frac{d}{N}\in\R$. We further investigate that since $d< N$ we only have a pole in $0$ if $\kappa=\mu =0$ and $r=2\sqrt{Nd}$, which cannot happen since  $r\geq 1$ and we assumed that $2\sqrt{Nd}\notin\Z\backslash\{0\}$.

We rewrite our principal value integral as the average of the paths $\gamma_{\varepsilon,+}$ and $\gamma_{\varepsilon,-}$, where $\gamma_{\varepsilon,+}$, resp. $\gamma_{\varepsilon,-}$, is the path of integration along the positive real axis taking a semicircular path of radius $\varepsilon$ above, resp. below, the pole (see Figure \ref{pathgamma+}).

\begin{figure}[ht] 
	\centering
	\begin{tikzpicture}

		\begin{scope}[thick]
			\draw                  (0,0) circle (.5pt) node[left] {$0$} -- (2.25,0); 
	
			\draw[->] (2.25,0)  arc (180:90:.5cm); 	
			\draw[] (3.25,0) arc (0:90:0.5cm);		
			
			\draw[] (2.75,0) circle (.5pt);
			
			\draw[->]     (3.25,0) --  (5.75,0) node[right] {$\infty$} ;

			\draw                  (7,0) --  (9.5,0);
			
			\draw[->] (9.5,0)  arc (-180:-90:.5cm); 	
			\draw[] (10.5,0) arc (0:-90:0.5cm);
			
			\draw[] (10,0) circle (.5pt);
			
			\draw[->]               (10.5,0) --  (13,0)   node[right] {$\infty$};
		\end{scope}
	\end{tikzpicture}
	\caption{The paths of integration $\gamma_{\varepsilon,+}$, resp. $\gamma_{\varepsilon,-}$.} 
	\label{pathgamma+}
\end{figure}
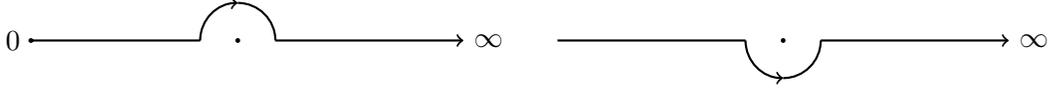
We obtain that 
\begin{align*}
	\mathcal{P}_{d,N,1}(k,\omega,\kappa,r) =&  \frac{1}{\kappa+\mu k + \frac{r}{2N}}  \lim_{\varepsilon\to 0 } \left( \frac{1}{2} \left(	\int_{\gamma_{\varepsilon,+}} + 	\int_{\gamma_{\varepsilon,-}} \right)  \frac{e^{-2 \pi Nu \frac{1}{k^2\omega} } \sqrt{u+\frac{d}{N}} }{ u+\frac{d}{N}-\lp \kappa+\mu k + \frac{r}{2N} \rp^2} \, du\right).
\end{align*}
We note that 
\begin{align*}
	\operatorname{Re}\left(\frac{2N}{k^2\omega}e^{\pm \frac{\pi i}{4}}\right) = \frac{\operatorname{Re}\left(\frac{2N}{k^2\omega}\right)}{\sqrt{2}} \mp \frac{\operatorname{Im}\left(\frac{2N}{k^2\omega}\right)}{\sqrt{2}},
\end{align*}
since $e^{\pm\frac{\pi i}{4}}= \frac{1}{\sqrt{2}}(1\pm i)$. Choosing the $\pm$ to be $-\sgn(\operatorname{Im}(\frac{2N}{k^2\omega}))$ thus gives us that
\begin{align*}
	\operatorname{Re}\left(\frac{2N}{k^2\omega}e^{\pm \frac{\pi i}{4}}\right) = \frac{\operatorname{Re}\left(\frac{2N}{k^2\omega}\right)}{\sqrt{2}} + \frac{\left|\operatorname{Im}\left(\frac{2N}{k^2\omega}\right)\right|}{\sqrt{2}}\geq \frac{\operatorname{Re}\left(\frac{2N}{k^2\omega}\right)}{\sqrt{2}} \geq \frac{N}{\sqrt{2}} ,
\end{align*}
since $\operatorname{Re}(\frac{2N}{k^2\omega})\geq N$. This means we either have $\operatorname{Re}(\frac{2N}{k^2\omega}e^{ \frac{\pi i}{4}})\geq \frac{N}{\sqrt{2}}$ or $\operatorname{Re}(\frac{2N}{k^2\omega}e^{- \frac{\pi i}{4}})\geq \frac{N}{\sqrt{2}}$.
Using Cauchy's Theorem we now want to rotate our paths of integration either to $e^{\frac{\pi i}{4}}\R^+$ if we have $\operatorname{Re}(\frac{2N}{k^2\omega}e^{ \frac{\pi i}{4}})\geq \frac{N}{\sqrt{2}}$ or to $e^{-\frac{\pi i}{4}}\R^+$ if $\operatorname{Re}(\frac{2N}{k^2\omega}e^{- \frac{\pi i}{4}})\geq \frac{N}{\sqrt{2}}$ picking up the residues from the poles that lie on the real line. Since both rotations follow the same argument we from now on assume that we have $\operatorname{Re}(\frac{2N}{k^2\omega}e^{ \frac{\pi i}{4}})\geq \frac{N}{\sqrt{2}}$. Note that for this rotation we only pick up poles by performing the rotation on $\gamma_{\varepsilon,-}$ as can by easily seen in Figure \ref{paths of integration}.

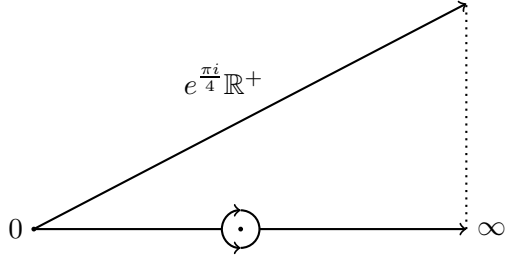
\begin{figure}[ht] 
	\centering
	\begin{tikzpicture}

		\begin{scope}[thick]
			\draw[]                  (0,0) circle (.5pt) node[left] {$0$} -- (2.5,0); 
			
			\draw[->] (2.5,0)  arc (180:90:.25cm); 	
			\draw[] (3,0) arc (0:90:0.25cm);	
			\draw[->] (2.5,0)  arc (-180:-90:.25cm); 	
			\draw[] (3,0) arc (0:-90:0.25cm);	
			
			\draw[] (2.75,0) circle (.5pt);
			
			\draw[->]     (3,0) --  (5.75,0) node[right] {$\infty$} ;
			
			\draw[->]   (0,0) --  (5.75,3);
			
			\draw   (3.2,2) node[left] {$e^{\frac{\pi i}{4}}\R^+$};
			 \draw[dotted] (5.75,0) -- (5.75,3);
		\end{scope}
	\end{tikzpicture}
	\caption{Rotation of the paths of integration.} 
	\label{paths of integration}
\end{figure}

We compute
\begin{align*}
	&\operatorname{Res}_{u=\lp \kappa+\mu k + \frac{r}{2N} \rp^2 -\frac{d}{N}} \frac{e^{-2 \pi Nu \frac{1}{k^2\omega} } \sqrt{u+\frac{d}{N}} }{ u+\frac{d}{N}-\lp \kappa+\mu k + \frac{r}{2N} \rp^2} \\
	=& \lim_{u\to \lp \kappa+\mu k + \frac{r}{2N} \rp^2 -\frac{d}{N}} \left(u- \lp \lp \kappa+\mu k + \frac{r}{2N} \rp^2 -\frac{d}{N}\rp\right)  \frac{e^{-2 \pi Nu \frac{1}{k^2\omega} } \sqrt{u+\frac{d}{N}} }{ u+\frac{d}{N}-\lp \kappa+\mu k + \frac{r}{2N} \rp^2} \\
	=&  e^{-2 \pi N\left(\lp \kappa+\mu k + \frac{r}{2N} \rp^2 -\frac{d}{N}\right) \frac{1}{k^2\omega} } \sqrt{\lp \kappa+\mu k + \frac{r}{2N} \rp^2 } =  e^{-2 \pi N\left(\lp \kappa+\mu k + \frac{r}{2N} \rp^2 -\frac{d}{N}\right) \frac{1}{k^2\omega} } \left| \kappa+\mu k + \frac{r}{2N} \right|,
\end{align*}
which gives us that the contribution of this poles on the positive real line and with respect to  $\lim_{L\to\infty}\sum_{\mu =-L}^{L} \mathcal{P}_{d,N,1}(k,\omega,\kappa,r) $ sums up to
\begin{align*}
	\pi i \lim_{L\to\infty}\sum_{\mu =-L}^{L} \delta_{\lp \kappa+\mu k + \frac{r}{2N} \rp^2 -\frac{d}{N}\geq 0}  e^{-2 \pi N\left(\lp \kappa+\mu k + \frac{r}{2N} \rp^2 -\frac{d}{N}\right)  \frac{1}{k^2\omega} } \sgn\left(\kappa+\mu k + \frac{r}{2N}\right).
\end{align*}
Next we define the \textit{Jacobi theta function} as 
$$\vartheta_3(z;\tau)\coloneqq \sum_{n\in\Z}e^{\pi in^2\tau +2\pi inz}.$$ 
Noting that it transforms modular by (see e.g.\@, \cite[page 32]{mumford1983Tata})
$$\vartheta_3(z;\tau)= \frac{1}{\sqrt{-i\tau}} e^{-\frac{\pi i z^2}{\tau}} \vartheta_3\left(\frac{z}{\tau}; -\frac{1}{\tau}\right)$$
we can bound the absolute value of this contribution against
\begin{align*}
\pi e^{\pi d} \sum_{\mu\in\Z}  e^{- \pi Nk^2\lp \mu +\frac{\kappa}{k}  + \frac{r}{2Nk} \rp^2 } 
=& \pi e^{\pi d} \frac{1}{\sqrt{Nk^2}} \, \vartheta_3\left(\frac{\kappa}{k}+\frac{r}{2Nk};-\frac{1}{iNk^2}\right) = O_N\left(\frac{1}{k}\right),
\end{align*}
where we used \cite[Figure 20.3.4]{nist} in the last step\footnote{Note that $\vartheta_3(z;\tau)=\theta_3(\pi z; e^{\pi i\tau})$ from \cite[Figure 20.3.4]{nist}.}.

We are left with bounding the integrals on the rotated paths. We see that they sum up to
\begin{align*}
		\mathcal{Q}_{d,N}(k,\omega,\kappa,r) \coloneqq&  \frac{1}{\kappa+\mu k + \frac{r}{2N}}   	\int_{e^{\frac{\pi i}{4}}  \R^+}  \frac{e^{-2 \pi Nu \frac{1}{k^2\omega} } \sqrt{u+\frac{d}{N}} }{ u+\frac{d}{N}-\lp \kappa+\mu k + \frac{r}{2N} \rp^2} \, du \\
		=&  \frac{1}{\kappa+\mu k + \frac{r}{2N}}   e^{\frac{\pi i}{4}} 	\int_{0}^\infty  \frac{e^{-2 \pi N \frac{1+i}{\sqrt{2}}u \frac{1}{k^2\omega} } \sqrt{\frac{1+i}{\sqrt{2}}u+\frac{d}{N}} }{ \frac{1+i}{\sqrt{2}}u+\frac{d}{N}-\lp \kappa+\mu k + \frac{r}{2N} \rp^2} \, du,
\end{align*}
by changing variables $u \mapsto e^{\frac{\pi i}{4}} u $. Since $ \operatorname{Re}(\frac{2N}{k^2\omega}e^{\frac{\pi i}{4}})\geq \frac{N}{\sqrt{2}}$ and $u\geq 0$ we note that 
\begin{align*}
	\left|e^{-2 \pi N \frac{1+i}{\sqrt{2}}u \frac{1}{k^2\omega} } \right| = e^{- \pi u \operatorname{Re}\left(\frac{2N}{k^2\omega}e^{\frac{\pi i}{4}}\right) } \leq e^{- \frac{\pi uN}{\sqrt{2}} }.
\end{align*}
Furthermore we have
\begin{align*}
	\frac{1}{\left| \frac{1+i}{\sqrt{2}}u+\frac{d}{N}-\lp \kappa+\mu k + \frac{r}{2N} \rp^2\right|} \leq \frac{\sqrt{2}}{\left|\lp \kappa+\mu k + \frac{r}{2N} \rp^2-\frac{d}{N}\right|},
\end{align*}
which, together with the other bound, yields
\begin{align*}
	\left| \mathcal{Q}_{d,N}(k,\omega,\kappa,r) \right| \leq& \frac{\sqrt{2}}{\left|\kappa+\mu k + \frac{r}{2N}\right|\left|\lp \kappa+\mu k + \frac{r}{2N} \rp^2-\frac{d}{N}\right|}  	\int_{0}^\infty  e^{- \frac{\pi uN}{\sqrt{2}} } \left|\frac{1+i}{\sqrt{2}}u+\frac{d}{N} \right|^\frac{1}{2} \, du.
\end{align*}
Noting that $e^{- \frac{\pi uN}{\sqrt{2}} } |\frac{1+i}{\sqrt{2}}u+\frac{d}{N} |^\frac{1}{2}$ is integrable and only depending on $d$ and $N$ gives us that the integral occuring above is $O_N(1)$.
Therefore we are left with showing
\begin{align*}
	\lim_{L\to\infty}\sum_{\mu =-L}^{L} \frac{\sqrt{2}}{\left|\kappa+\mu k + \frac{r}{2N}\right|\left|\lp \kappa+\mu k + \frac{r}{2N} \rp^2-\frac{d}{N}\right|}= O_N\left(\frac{1}{\min\left( \kappa  + \frac{r}{2N},k - \kappa  - \frac{r}{2N} \right)}\right).  
\end{align*}
Using elementary estimates and the fact that $d<N$ we see that
\begin{small}
	\begin{align}\label{bound: last term against minimum}
	 \frac{\sqrt{2}}{k^3} \sum_{\mu \in\Z} &\frac{1}{\left|\mu + \frac{\kappa}{k}  + \frac{r}{2Nk}\right|\left|\lp \mu + \frac{\kappa}{k} + \frac{r}{2Nk} \rp^2-\frac{d}{Nk^2}\right|} \notag \\
	 <& \frac{\sqrt{2}}{k^3}\left[ \frac{k^3}{\left( \kappa  + \frac{r}{2N}\right)\left|\lp  \kappa + \frac{r}{2N} \rp^2-\frac{d}{N}\right|} + \frac{k^3}{\left(k - \kappa  - \frac{r}{2N}\right)\left|\lp k - \kappa- \frac{r}{2N} \rp^2-\frac{d}{N}\right|} \right. \notag \\
	 &\left. \hspace{1cm} + \frac{k^3}{\left( k + \kappa  + \frac{r}{2N}\right)\left(\lp k + \kappa + \frac{r}{2N} \rp^2-1\right)} + \frac{k^3}{\left(2k - \kappa - \frac{r}{2N}\right)\left(\lp 2k - \kappa - \frac{r}{2N} \rp^2- 1\right)}\right. \notag \\
	 &\left. \hspace{1cm} + \sum_{\mu \geq 2} \frac{1}{\mu \lp \mu^2-1\rp}  +  \sum_{\mu \geq 3} \frac{1}{\left(\mu - 1\right)\left(\lp \mu - 1 \rp^2-1\right)} \right]\notag \\
	 =& O_N\left(\frac{1}{\min\left(\left( \kappa  + \frac{r}{2N}\right)\left|\lp  \kappa + \frac{r}{2N} \rp^2-\frac{d}{N}\right|,\left(k - \kappa  - \frac{r}{2N}\right)\left|\lp k - \kappa- \frac{r}{2N} \rp^2-\frac{d}{N}\right| \right)}\right).
\end{align}
\end{small}
\hspace{-0.25cm} For $\kappa \in\{0,1,k-2,k-1\}$ the required bound holds, so we can restrict to the case $2\leq \kappa\leq k-3$.
Here we have 
$$ \lp  \kappa + \frac{r}{2N} \rp^2-\frac{d}{N}>1 \qquad \text{and} \qquad \lp k - \kappa- \frac{r}{2N} \rp^2-\frac{d}{N}>1,$$ 
which yields that we can also bound \eqref{bound: last term against minimum} against
$$ \frac{1}{\min\left( \kappa  + \frac{r}{2N},k - \kappa  - \frac{r}{2N} \right)}$$
and finishes the proof.
\end{proof}

Using Lemma \ref{lemma: bound pv integral} with $d=\frac{1}{24}$, \eqref{bound: Kloosterman sum} for all the Kloosterman sums, and noting that
$$ \sum_{\kappa=0}^{k-1}  \frac{1}{\min\left(\kappa+\frac{r}{2N}, k-\kappa-\frac{r}{2N}\right)} = O(\log(k))$$
yields
\begin{align*}
	a_{\CI^e, j,N,1}(n) =& O_N\left(\sum_{k=1}^J \sum_{r=1}^{N-1} nk^{\frac{1}{2}+\varepsilon} \log (k)\int_{-\frac{1}{k(J+k)}}^{\frac{1}{k(J+k)}} \left|e^{2\pi g_{j,N}(n)\omega}\right|  \, d\phi\right) \\
	=& O_N\left(n e^{2\pi g_{j,N}(n)J^{-2}} J^{-1} \sum_{k=1}^J  k^{-\frac{1}{2}+\varepsilon} \log(k) \right) = O_N\left( n e^{2\pi g_{j,N}(n)J^{-2} }   J^{-\frac 12+\varepsilon}  \log(J)\right),
\end{align*}
\begin{align*}
	a_{\CI^e, j,N,2}(n)  =& O_N\left( \sum_{k=1}^J \sum_{\ell=J+1}^{J+k-1}  \sum_{r=1}^{N-1}  nk^{\frac{1}{2}+\varepsilon} \log(k)   \int_{-\frac{1}{k\ell}}^{-\frac{1}{k(\ell+1)}}   \left|e^{2\pi g_{j,N}(n)\omega}\right|
	\,   d\phi\right) \\
	=& O_N\left(n e^{2\pi g_{j,N}(n)J^{-2}} J^{-1} \sum_{k=1}^J  k^{-\frac{1}{2}+\varepsilon} \log(k) \right) = O_N\left( n e^{2\pi g_{j,N}(n)J^{-2} }   J^{-\frac 12+\varepsilon}  \log(J)\right),
\end{align*}
and analogously $a_{\CI^e, j,N,3}(n) =O_N( n e^{2\pi g_{j,N}(n)J^{-2} }   J^{-\frac 12+\varepsilon}  \log(J)).$
Overall we thus obtain
\begin{align}\label{equation: final result a_{I^e}}
	a_{\CI^e, j,N}(n) 
	=& O_N\left(n e^{2\pi g_{j,N}(n)J^{-2} }   J^{-\frac 12+\varepsilon}  \log(J)\right).
\end{align}

Using the same bounds as used above for Lemma \ref{lemma: bound pv integral} with $d=0$, $\operatorname{Re}(\frac{2}{k^2\omega})\geq 1$, and noting that
$$ \sum_{m\geq1} p(m) e^{-\pi\left(m-\frac{1}{24}\right)} = \eta\left(\frac{i}{2}\right)^{-1} - e^{\frac{\pi}{24}}  =O(1)$$
yields
\begin{align*}
	a_{\CI, j,N,1}(n) =& O_N\left(  \sum_{k=1}^J \sum_{r=1}^{N-1}   \int_{-\frac{1}{k(J+k)}}^{\frac{1}{k(J+k)}} \sum_{m\geq 1} p(m) \left|e^{-\frac{2\pi\left( m-\frac{1}{24}\right) }{k^2\omega}}\right|  nk^{\frac{1}{2}+\varepsilon} \log(k) \left|e^{2\pi g_{j,N}(n)\omega} \right|  \, d\phi\right) \\
	=&  O_N\left( n e^{2\pi g_{j,N}(n)J^{-2}} \sum_{k=1}^J \sum_{r=1}^{N-1} k^{\frac{1}{2}+\varepsilon} \log(k)  \int_{-\frac{1}{k(J+k)}}^{\frac{1}{k(J+k)}}  \, d\phi\right) \\
	=& O_N\left( n e^{2\pi g_{j,N}(n)J^{-2} }   J^{-\frac 12+\varepsilon}  \log(J)\right),
\end{align*}
and analogously to above
\begin{align*}
	a_{\CI, j,N,2}(n)  =&  O_N\left( \sum_{k=1}^J \sum_{\ell=J+1}^{J+k-1}  \sum_{r=1}^{N-1}   \int_{-\frac{1}{k\ell}}^{-\frac{1}{k(\ell+1)}} \sum_{m\geq 1} p(m) \left|e^{-\frac{2\pi \left( m-\frac{1}{24}\right) }{k^2\omega}} \right| nk^{\frac{1}{2}+\varepsilon} \log(k) \left|e^{2\pi g_{j,N}(n)\omega}\right|	  \,   d\phi\right)\\
	=& O_N\left( n e^{2\pi g_{j,N}(n)J^{-2} }   J^{-\frac 12+\varepsilon}  \log(J)\right),
\end{align*}
and $a_{\CI, j,N,3}(n)= O_N( n e^{2\pi g_{j,N}(n)J^{-2} }   J^{-\frac 12+\varepsilon}  \log(J)).$
Overall we thus obtain
\begin{align}\label{equation: final result a_{I}}
a_{\CI, j,N}(n) 
=&  O_N\left( n e^{2\pi g_{j,N}(n)J^{-2} }   J^{-\frac 12+\varepsilon}  \log(J)   \right) .
\end{align}

\subsection{Combining the results}
Plugging \eqref{equation: final result a_{I^*}}, \eqref{equation: final result a_{I^e}}, and \eqref{equation: final result a_{I}} into \eqref{equation: splitting a_{j,N}}, using the definition of $g_{j,N}(n)$, and taking $J\to\infty$ gives
\begin{small}
\begin{align*}
&\!\!\!\!a_{j,N}(n) = -\frac{2\pi i }{\sqrt{n+\frac{j^2}{4N}-\frac{1}{24}}}\sum_{k\geq1} \sum_{r=1}^{N-1} \sum_{\kappa=0}^{k-1} \frac{ K_{k,j,N}(n,r,\kappa)}{k^2} \notag \\
&~~~~~\times  \mathrm{P.V.} 
\int_{-\sqrt{\frac{1}{24N}}}^{\sqrt{\frac{1}{24N}}} \sqrt{\frac{1}{24}-Nx^2} \cot\left(\pi \lp -\frac xk+ \frac{\kappa}{k} + \frac{r}{2Nk} \rp \right)  I_1\left(\frac{4\pi\sqrt{n+\frac{j^2}{4N}-\frac{1}{24}}}{k} \sqrt{\frac{1}{24}-Nx^2}\right)  \, dx .
\end{align*}
\end{small}
\hspace{-0.25cm} This finishes the proof of Theorem \ref{main Thm}.

Again by noting that the integral over $x$ only has a simple pole in $x= \frac{r}{2N}$ for $\kappa=0$ and $r<\sqrt{\frac N6}$ we additionally obtain 
\begin{scriptsize}
	\begin{align}\label{equation: coefficients final without P.V.}
	a_{j,N}(n) =& -\frac{2\pi i }{\sqrt{n+\frac{j^2}{4N}-\frac{1}{24}}}\sum_{k\geq1} \sum_{r=1}^{\left\lceil\sqrt{\frac N6}-1\right\rceil}  \frac{ K_{k,j,N}(n,r,0)}{k^2}   \\
	&\quad \times  
	\lim\limits_{\varepsilon\to0}\left(\int_{-\sqrt{\frac{1}{24N}}}^{\frac{r}{2N}-\varepsilon}  + \int_{\frac{r}{2N}+\varepsilon}^{\sqrt{\frac{1}{24N}}}\right) \sqrt{\frac{1}{24}-Nx^2} \cot\left(\pi \lp -\frac xk+  \frac{r}{2Nk} \rp \right)  I_1\left(\frac{4\pi\sqrt{n+\frac{j^2}{4N}-\frac{1}{24}}}{k} \sqrt{\frac{1}{24}-Nx^2}\right)  \, dx  \notag \\
	& -\frac{2\pi i }{\sqrt{n+\frac{j^2}{4N}-\frac{1}{24}}}\sum_{k\geq1} \sum_{r=\left\lceil\sqrt{\frac N6}\right\rceil}^{N-1}  \frac{ K_{k,j,N}(n,r,0)}{k^2}  \notag \\
	&\quad \times   
	\int_{-\sqrt{\frac{1}{24N}}}^{\sqrt{\frac{1}{24N}}} \sqrt{\frac{1}{24}-Nx^2} \cot\left(\pi \lp -\frac xk+  \frac{r}{2Nk} \rp \right)  I_1\left(\frac{4\pi\sqrt{n+\frac{j^2}{4N}-\frac{1}{24}}}{k} \sqrt{\frac{1}{24}-Nx^2}\right)  \, dx \notag  \\
	&-\frac{2\pi i }{\sqrt{n+\frac{j^2}{4N}-\frac{1}{24}}}\sum_{k\geq1} \sum_{r=1}^{N-1} \sum_{\kappa=1}^{k-1} \frac{ K_{k,j,N}(n,r,\kappa)}{k^2} \notag \\
	&\quad \times 
	\int_{-\sqrt{\frac{1}{24N}}}^{\sqrt{\frac{1}{24N}}} \sqrt{\frac{1}{24}-Nx^2} \cot\left(\pi \lp -\frac xk+ \frac{\kappa}{k} + \frac{r}{2Nk} \rp \right)  I_1\left(\frac{4\pi\sqrt{n+\frac{j^2}{4N}-\frac{1}{24}}}{k} \sqrt{\frac{1}{24}-Nx^2}\right)  \, dx. \notag 
	\end{align}
\end{scriptsize}

\begin{remark}
	Note that we had to exclude $n=0$ in our calculation. This is not caused by the fact that we have $\sqrt{g_{j,N}(n)}$ in the denominator of \eqref{equation: final result a_{I^*,0}} (which equals $0$ if and only if $n=0$ and $j=\sqrt{\frac N6}$), but because the estimates of our Kloosterman sums would break down for this special case (see \cite[Section 8]{rademacher1938fourier}). 
\end{remark}

\section{Numerical Results}\label{section: Numerical results}
In this section we offer some numerical results and compare the value of $a_{j,N}(n)$ for a number of cases to the results from Theorem \ref{main Thm}, where we numerically perform the sum over $k$ from $1$ to $J$.

\begin{table}[H]
	\centering
	\tabulinesep=1mm
	\begin{tabu}{l|c|c|c|c|c}
	 & J=1 & J=3 & J=20 & J=25 & J=50\\
	\hline
	\phantom{\rule{0mm}{4mm}}
	$a_{1,3}(3)= 2 $ & $2.3181\dots$    & $2.2886\dots$   & $2.0990\dots$ & $2.0875\dots$ & $2.0527\dots$ \\
	$a_{1,3}(10)= 30$ & $29.8989\dots$  & $30.2442\dots$  & $30.0866\dots$ & $30.0789\dots$ & $30.0418\dots$ \\
	$a_{1,3}(18)= 272$ & $271.3098\dots$   & $272.2656\dots$ & $272.0720\dots$ & $272.0651\dots$ & $272.0408\dots$ \\
	\hline
	\phantom{\rule{0mm}{4mm}}
	$a_{5,8}(3)= 2$ & $2.5197\dots$  & $2.2200\dots$  & $1.9993\dots$ & $1.9830\dots$ & $1.9892\dots$ \\
	$a_{5,8}(10)= 27$ & $26.2697\dots$ & $26.9853\dots$  & $26.9856\dots$ & $26.9997\dots$ & $26.9991\dots$ \\
	$a_{5,8}(18)= 216$ & $214.4979\dots$ & $216.0557\dots$ & $215.9830\dots$ & $215,9893\dots$ & $216.0044\dots$ \\
	\hline
	\phantom{\rule{0mm}{4mm}}
	$a_{3,10}(3)= 3$ & $3.1624\dots$  & $3.0544\dots$ & $3.0307\dots$ & $3.0222\dots$ & $2.9985\dots$ \\
	$a_{3,10}(10)= 39$ & $38.5337\dots$ & $38.9965\dots$ & $39.0080\dots$ & $39.0001\dots$ & $38.9982\dots$ \\
	$a_{3,10}(18)= 336$ & $334.3940\dots$  & $336.0237\dots$  & $336.0058\dots$ & $336.0254\dots$ & $336.0111\dots$ \\
	\hline
	\hline 
\end{tabu}
\vspace{4mm}
\caption{Numerical results for Fourier coefficients of $q^{\frac{1}{24}-\frac{j^2}{4N}}\CA_{j,N} (\tau)$.}
\end{table}

\section{Further Questions}
To end this paper we want to briefly mention some related questions that could be the topic for possible follow up projects.
\begin{enumerate}
	\item By splitting the Mordell-type integral in Section \ref{subsection: Splitting of the Mordell-type integral} we avoided the special case $2\sqrt{Nd}\in\Z\backslash\{0\}$ to verify the well-definedness of the principal value integral. Is there a possibility to get rid of the extra condition? What happens if we have a pole right at the edge of our integration path?
	\item If one is interested in figuring out what happens if we let $N$ tend to $\infty$ one could be more precise about the dependence of $N$ in the error terms while running the Circle Method as well as in the bound of the Kloosterman sum.
\end{enumerate}

\end{document}